\theoremstyle{theorem}
\newtheorem{thm}{Theorem}[section]
\newtheorem{prop}[thm]{Proposition}
\newtheorem{lem}[thm]{Lemma}
\theoremstyle{remark}
\newtheorem{rem}[thm]{Remark}
\newtheorem{ex}[thm]{Example}
\theoremstyle{definition}
\newtheorem{defi}[thm]{Definition}
\newcommand{\D}{\mathcal{D}}
\renewcommand{\H}{\mathcal{H}}
\renewcommand{\P}{\mathcal{P}}
\newcommand{\Q}{\mathbb{Q}}
\newcommand{\C}{\mathbb{C}}
\renewcommand{\c}{\mathscr{C}}
\renewcommand{\k}{\mathscr{K}}
\newcommand{\s}{\mathscr{S}}
\newcommand{\I}{\mathbb{I}}
\newcommand{\gr}{\mathrm{gr}}
\newcommand{\sgn}{\mathrm{sgn}}
\newcommand{\wt}[1]{\widetilde{#1}}
\renewcommand{\r}{0.8}
\begin{document}


\title{The combinatorial Hopf algebra of motivic dissection polylogarithms}
\author{Cl\'{e}ment Dupont}
\address{Institut de Math\'{e}matiques de Jussieu - Paris Rive Gauche\\4, place Jussieu \\75005 Paris, France}
\email{clement.dupont@imj-prg.fr}

\maketitle

\begin{abstract}
We introduce a family of periods of mixed Tate motives called dissection polylogarithms, that are indexed by combinatorial objects called dissection diagrams. The motivic coproduct on the former is encoded by a combinatorial Hopf algebra structure on the latter. This generalizes Goncharov's formula for the motivic coproduct on (generic) iterated integrals. Our main tool is the study of the relative cohomology group corresponding to a bi-arrangement of hyperplanes.
\end{abstract}



\section{Introduction}

	\subsection{Mixed Tate motives, periods, and combinatorial Hopf algebras}
	
	This article is part of a general attempt to understand mixed Tate motives and their periods through combinatorics.\\
	
	 Let $F$ be a number field and let $\mathrm{MTM}(F)$ be the category of mixed Tate motives over $F$ with coefficients in $\Q$ \cite{levinetatemotives}. It is a tannakian category, which means that it is equivalent to the category of finite-dimensional representations of an affine group scheme $G_{\mathrm{MTM}(F)}$ defined over $\Q$:
	 $$\mathrm{MTM}(F)\cong \mathrm{Rep}(G_{\mathrm{MTM}(F)}).$$
	 The motivic Galois group $G_{\mathrm{MTM}(F)}$ is abstractly defined by the tannakian formalism; a general programme is then to understand it and its action on mixed Tate motives in concrete terms. In practice, it is usually easier to work with the ring of functions $\H_{\mathrm{MTM}(F)}$ on (the pro-unipotent part of) $G_{\mathrm{MTM}(F)}$, which is a Hopf algebra defined over $\Q$ (see \S\ref{parMTM}). There are at least two motivations for such a programme.\\
	 The first motivation is to give explicit complexes to compute the rational algebraic $K$-theory of the field $F$. Indeed, the rational $K$-groups of $F$ are related to the Ext groups in the category $\mathrm{MTM}(F)$ \cite[1.6]{delignegoncharov}, and hence may be algebraically deduced from the Hopf algebra $\H_{\mathrm{MTM}(F)}$. This point of view has already appeared at many places in the literature, with the notions of \enquote{polylogarithmic complex} or \enquote{motivic complex} (see \cite{goncharovpolylogsarithmeticgeometry} for a survey).\\
	 The second motivation is related to the (conjectural) Galois theory of periods \cite{andrebook,andregaloismotivestransc}. More precisely, if we start with a period $p$ of $\mathrm{MTM}(F)$, one may lift it to a motivic period $p^\H$ which lives in the Hopf algebra $\H_{\mathrm{MTM}(F)}$ (see \S\ref{sectionmotivicperiods} for a discussion on the natural setting for motivic periods). The coproduct of $p^\H$, also referred to as the motivic coproduct of $p$, gives the (conjectural) action of the motivic Galois group $G_{\mathrm{MTM}(F)}$ on $p$. These ideas have been made popular by the pioneering work of A. B. Goncharov (\cite{goncharovmultiplepolylogs,goncharovgaloissym}, see also Deligne and Goncharov \cite{delignedroiteprojective,delignegoncharov}), who was able to compute the motivic coproduct of iterated integrals on the punctured complex line. In this framework, F. Brown was able to prove \cite{brownMTMZ} the Deligne-Ihara conjecture and the Hoffman basis conjecture for motivic multiple zeta values.\\
	 
	 In this article, we compute the motivic coproduct for a family of periods called dissection polylogarithms. These periods, which are indexed by combinatorial objects called dissection diagrams, generalize Goncharov's (generic) iterated integrals. We show that their motivic coproduct is related to a combinatorial Hopf algebra on dissection diagrams. This Hopf algebra is part of a growing family of Hopf algebras based on combinatorial objects, whose most famous representative is the Connes-Kreimer Hopf algebra (\cite{conneskreimer}, see \cite{lodayroncocombinatorial} for a tentative definition of the term \enquote{combinatorial Hopf algebra} and references).

	\subsection{Main results of this article}

The combinatorial objects that we consider are called \textit{dissection diagrams}. A dissection diagram of degree $n$ is a set of $n$ non-intersecting chords of a rooted oriented polygon (the polygons will always be drawn as circles) with $(n+1)$ vertices such that the graph formed by the chords is acyclic. See Figure \ref{introfigdd}\\

 \begin{figure}[h!]
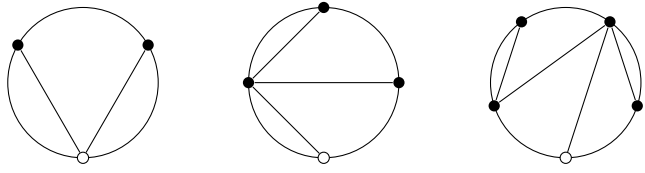

	\begin{center}\corolla{2}{1} \hspace{1cm}\dddegthreeexTen{1} \hspace{1cm}\dddegfourexintro{1}\end{center}
	\caption{Examples of dissection diagrams of respective degrees $2$, $3$, $4$. The polygons are drawn as circles. Every polygon has a distinguished vertex, which is called the root and is depicted as a white dot.}\label{introfigdd}\end{figure}	

Let $\D$ be the free commutative $\Q$-algebra generated by dissection diagrams, with a grading given by the degrees of the dissection diagrams. We give $\D$ the structure of a graded Hopf algebra. The coproduct $\Delta:\D\rightarrow \D\otimes\D$ is uniquely defined by its value on the dissection diagrams $D$, and is given by the formula
\begin{equation}\label{introformulacoproduct}
\Delta(D)=\sum_{C\subset\c(D)} \pm q_C(D)\otimes r_C(D).
\end{equation}
The terms in this formula are defined in \S\ref{sectionoperations}. For now let us just mention that $\c(D)$ denotes the set of chords of $D$, $q_C(D)$ is a product of dissection diagrams obtained by taking the quotient of $D$ by the chords in $C$ (contraction of chords), and $r_C(D)$ is a single dissection diagram obtained by keeping only the chords in $C$ (deletion of chords). The form of the coproduct is reminiscent of several other combinatorial Hopf algebras, such as the Connes-Kreimer Hopf algebra of rooted trees \cite{conneskreimer}.\\

There is a decorated version $\D(\C)$ of this Hopf algebra, where we attach complex numbers to each side of the polygon and each chord of the dissection diagram.

\begin{figure}[!h]
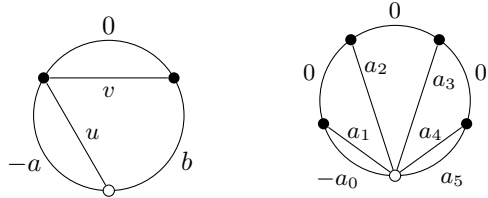

\begin{center}\decorateddddegreetwointro{1} \hspace{1cm} \decoratedcorollaFour{1}\end{center}
\caption{Decorated dissection diagrams. On the right, a decorated corolla.}\label{introfigdecorateddd}\end{figure}

 We mainly consider the Hopf subalgebra $\D^{gen}(\C)\subset\D(\C)$ generated by the decorated dissection diagrams that satisfy a genericity condition on the decorations. To each such generic decorated dissection diagram $D$, we associate an absolutely convergent integral
$$I(D)=\int_{\Delta_D}\omega_D$$
called a \textit{dissection polylogarithm}, where $\omega_D$ is a meromorphic form on $\C^n$ and $\Delta_D$ is a singular simplex in $\C^n$ that does not meet the polar locus of $\omega_D$.
For example, for $D$ the first decorated dissection diagram in Figure \ref{introfigdecorateddd}, we get 
$$\omega_D=\frac{1}{(2i\pi)^2}\frac{dx\wedge dy}{(x-u)(y-x-v)}$$
and $\Delta_D:\Delta^2\rightarrow\C^2$ is a continuous image of a triangle which is bounded by the lines $\{x=a\}$, $\{x=y\}$ and $\{y=b\}$. In general, the form $\omega_D$ is determined by the combinatorial data of the decorated chords and the domain $\Delta_D$ is determined by the decorations of the sides of the polygon (the integral $I(D)$ depends on the choice of $\Delta_D$, see \S\ref{defdissectionpolylogs} for a discussion).\\

The dissection polylogarithms generalize the (generic) iterated integrals on the punctured complex plane $\I(a_0;a_1,\ldots,a_n;a_{n+1})$ studied by Goncharov \cite{goncharovgaloissym}, which correspond to the special case of corollas (dissection diagrams where the chords are all linked to the root vertex, see Figure \ref{introfigdecorateddd}). In this special case, the genericity condition dictates that the decorations $a_i$ are pairwise distinct.\\

The geometric meaning of the integral $I(D)$ is the following. Let $L$ be the polar locus of $\omega_D$ and let $M$ be the Zariski closure of $\partial\Delta_D$; $L$ and $M$ are unions of hyperplanes inside $\C^n$. The dissection polylogarithm $I(D)$ is thus a period of the $\Q$-mixed Hodge structure
$$H(D)=H^n(\C^n\setminus L,M\setminus M\cap L)$$
which is of mixed Tate type (its weight graded pieces have type $(p,p)$).
If the decorations belong to a fixed number field $F$ embedded in $\C$, then $H(D)$ is the Hodge realization of a mixed Tate motive over $F$ denoted $\wt{H}(D)$. 
 Adding the extra data of the classes of $\omega_D$ and $\Delta_D$, we define a framed version
$$I^\H(D)=(H(D),[\omega_D],[\Delta_D])$$
which is an algebro-geometric avatar of the complex number $I(D)$. It shall be called the \textit{motivic dissection polylogarithm}.\\ 
This framed version (or more precisely, its equivalence class) naturally lives in a graded Hopf algebra $\H$, which is the fundamental Hopf algebra of the tannakian category of $\Q$-mixed Hodge-Tate structures (or, if we replace $H(D)$ by $\wt{H}(D)$, in the Hopf algebra $\H_{\mathrm{MTM}(F)}$ of mixed Tate motives over $F$). The main result of this article is the computation of the coproduct of the motivic dissection polylogarithms. More precisely, we show that they generate a Hopf subalgebra of $\H$ and that their coproduct can be computed combinatorially using formula (\ref{introformulacoproduct}).

\begin{thm}\label{intromaintheorem}[see Theorem \ref{maintheorem}]\\
Let $D$ be a generic decorated dissection diagram. Then the coproduct of the corresponding motivic dissection polylogarithm in $\H$ is given by formula (\ref{introformulacoproduct}):
$$\Delta(I^\H(D))=\sum_{C\subset\c(D)}\pm I^\H(q_C(D))\otimes I^\H(r_C(D)).$$
In other words, the morphism $$\D^{gen}(\C)\rightarrow \H \,\, , \,\, D\mapsto I^\H(D)$$ is a morphism of graded Hopf algebras.
\end{thm}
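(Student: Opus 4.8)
The plan is to prove Theorem \ref{intromaintheorem} by realizing the motivic coproduct on $I^\H(D)$ as a geometric statement about the relative cohomology $H(D) = H^n(\C^n \setminus L, M \setminus M \cap L)$, and then translating that geometry into the combinatorics of dissection diagrams. The starting point is the general formula for the coproduct in the Hopf algebra $\H$ of mixed Hodge--Tate structures (or $\H_{\mathrm{MTM}(F)}$): for a framed object $(H,[\omega],[\Delta])$, the coproduct is a sum over the weight filtration of $H$, of terms indexed by a basis of the weight-graded pieces, each term pairing $[\omega]$ against a basis element and that basis element (viewed dually) against $[\Delta]$. So the first step is to understand the weight filtration $W_\bullet H(D)$ and to produce explicit framings for the associated graded pieces $\gr^W_{2k} H(D)$. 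I expect the paper's earlier analysis of the bi-arrangement of hyperplanes $(L, M)$ to give a combinatorial description of these graded pieces: each weight-graded piece should decompose as a sum, over subsets $C \subset \c(D)$ of chords of a suitable ``size'', of one-dimensional pieces (after tensoring by Tate twists), with the subset $C$ recording which hyperplanes contribute to the ``lower'' part (the $q_C$-factor, a product over connected components) and which to the ``upper'' part (the $r_C$-factor). This is where the deletion/contraction operations $q_C(D)$ and $r_C(D)$ defined in \S\ref{sectionoperations} must match, on the nose, the geometry of restricting $\omega_D$ and $\Delta_D$ to the relevant faces.

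Concretely, the key steps in order would be: (1) recall/apply the computation of $H(D)$ as the cohomology of the bi-arrangement $(L,M)$, in particular its purity properties in each weight and a combinatorial basis of $\gr^W_{2k}H(D)$ indexed by chord subsets $C$ (this should be exactly Theorem-level input from the body of the paper, e.g. an Orlik--Solomon-type or no-broken-circuit-type description, cf.\ the relations sketched via \relationorliksolomon{\r} and \relationstokes{\r}); (2) for each such $C$, identify the sub/quotient mixed Hodge structure it generates with $H(q_C(D)) \otimes H(r_C(D))$ up to Tate twist --- i.e.\ check that contracting the chords in $C$ produces the polar arrangement of a product of smaller dissection polylogarithms and that keeping only the chords in $C$ produces the boundary arrangement of a single smaller one; (3) verify that the restricted forms and domains match: $[\omega_D]$ restricted to the $q_C$-stratum equals $[\omega_{q_C(D)}]$ (a statement about residues of $\omega_D$ along the hyperplanes in $C$), and dually the class $[\Delta_D]$ maps to $[\Delta_{r_C(D)}]$ under the corresponding face map (a statement about which faces of the simplex $\Delta_D$ survive); (4) assemble these into formula (\ref{introformulacoproduct}), checking that the signs $\pm$ produced by the orientation conventions on chords/faces agree with the signs in the combinatorial coproduct; (5) conclude that $D \mapsto I^\H(D)$ is an algebra map (immediate, since $\D^{gen}(\C)$ and $\H$ are both free-commutative on the relevant generators and $I^\H$ is multiplicative by the K\"unneth/product formula for framed objects) and a coalgebra map by steps (1)--(4), hence a Hopf algebra morphism.

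The main obstacle will be step (2)--(3): the careful matching of the geometry of the bi-arrangement under deletion and contraction of chords with the combinatorially-defined operations $q_C$ and $r_C$. This requires a clean inductive/functorial handling of residues (contraction: passing to a hyperplane in $L$ and taking a Poincar\'e residue, which should split off a product according to the connected components of the contracted chord graph --- note the acyclicity hypothesis on dissection diagrams is presumably what guarantees the contraction stays within the class of dissection diagrams and the product structure is as claimed) and of restriction to faces of $M$ (deletion: restricting the domain $\Delta_D$ to a face, which should realize $r_C(D)$). One has to be especially careful that the ``genericity'' condition on decorations is preserved, or at least that it is preserved for all the sub-diagrams appearing, so that every $I^\H(q_C(D))$ and $I^\H(r_C(D))$ is defined and lies in $\H$; this is presumably why one restricts to $\D^{gen}(\C)$. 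A secondary but real difficulty is bookkeeping the signs consistently between the two sides, which typically forces one to fix once and for all compatible orientation conventions for chords, for the polygon's sides, and for the faces of $\Delta_D$, and then to check the sign rule in a few small cases (degrees $1$, $2$, $3$, as in the table \examplestable{\r} and the worked coproduct \exampleformulacoproduct{\r}) before giving the general argument. I would expect the proof to proceed by induction on the degree $n$, using the degree-one case (the decorated logarithm \decoratedlogarithm{\r}) and the corolla case (Goncharov's iterated integrals) as the base, with the inductive step powered by the residue computations in (2)--(3).
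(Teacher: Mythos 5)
Your plan is essentially the paper's proof: the abstract coproduct formula for framed objects is evaluated on the combinatorial basis $b_C=e_C\otimes f_{\s_C^+}$ of $\gr^W_{2k}H(D)$ (Proposition \ref{propbasis}, resting on Theorem \ref{relativeBOS}), and the two tensor factors are identified with $I^\H(q_C(D))$ and $I^\H(r_C(D))$ via the deletion/contraction/K\"unneth functoriality of Theorem \ref{functoriality} combined with Lemma \ref{lembiarrangementsRQ}, with the sign bookkeeping isolated in Lemma \ref{lemproductsigns} and genericity preserved exactly as you anticipate. The only divergence is your closing expectation of an induction on the degree with the corolla case as base: the paper's argument is direct, with no induction and no special role for corollas.
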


The particular case of iterated integrals has previously been worked out by Goncharov \cite{goncharovgaloissym} in the framework of motivic fundamental groupoids of the punctured complex line.\\

In order to prove Theorem \ref{intromaintheorem}, one needs to have a good understanding of the relative cohomology groups $H(D)$. We introduce the notion of bi-arrangements of hyperplanes and compute the corresponding relative cohomology groups in what we call the \textit{affinely generic} case (see \S\ref{paragraphrelativeBOS}). Our main technical tool is the following theorem.

\begin{thm}\label{introrelativeBOS}[see Theorems \ref{relativeBOS} and \ref{functoriality} for more precise statements]\\
Let $\{L_1,\ldots,L_l,M_1,\ldots,M_m\}$ be a set of hyperplanes of $\C^n$. We write $L=L_1\cup\cdots\cup L_l$ and $M=M_1\cup\cdots\cup M_m$ and we assume that $L\cup M$ is a normal crossing divisor inside $\C^n$. Then for every $k$ we have an explicit presentation
$$\gr^W_{2k}H(L;M)\cong \left(\Lambda^k(e_1,\ldots,e_l)\otimes\Lambda^{n-k}(f_1,\ldots,f_m)\right) / R_k(L;M).$$ 
This presentation is functorial in $(L;M)$.
\end{thm}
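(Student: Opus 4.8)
The plan is to compute both the cohomology $H(L;M) = H^n(\C^n\setminus L, M\setminus M\cap L)$ and its weight filtration explicitly using the normal crossing hypothesis, and then to read off a presentation of each graded piece. First I would set up the long exact sequence of the pair relating $H^\bullet(\C^n\setminus L, M\setminus M\cap L)$ to $H^\bullet(\C^n\setminus L)$ and $H^\bullet(M\setminus M\cap L)$. Since $\C^n\setminus L$ is the complement of a normal crossing (in fact, after a change of coordinates, a union of coordinate hyperplanes and affine hyperplanes in general position) the Orlik--Solomon/Brieskorn description gives $H^\bullet(\C^n\setminus L)$ as a quotient of the exterior algebra $\Lambda^\bullet(e_1,\ldots,e_l)$, where $e_i$ is the class of $\frac{1}{2i\pi}\mathrm{dlog}$ of a defining equation of $L_i$; this is pure of Tate type $(k,k)$ in degree $k$, so it lives entirely in $\gr^W_{2k}$. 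The restriction of this picture to the strata $M_{j_1}\cap\cdots\cap M_{j_r}$ (which are again affine spaces carrying the trace normal crossing arrangement from $L$) feeds the $M\setminus M\cap L$ side, and the $f_j$'s should arise as the connecting/Gysin classes dual to the hyperplanes $M_j$.

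The key steps, in order: (1) reduce to the normal crossing model and fix generators; (2) build a bicomplex (or spectral sequence) that computes $H(L;M)$ out of the pieces $H^\bullet\big((\bigcap_{j\in J}M_j)\setminus L\big)$ indexed by subsets $J\subset\{1,\ldots,m\}$, with differentials given by the combinatorial restriction maps $e_i\mapsto e_i$ (or $0$ if $L_i\supset \bigcap M_j$, handled by the normal crossing condition) and the inclusion-induced maps on the $M$ side; (3) identify the $E_1$ (or $E_2$) page with $\Lambda^k(e)\otimes\Lambda^{n-k}(f)$ modulo the relations coming from the Orlik--Solomon ideal on each stratum together with the relations forced by the interaction of an $L_i$ with an $M_j$ when $L_i$ and $M_j$ coincide or are dependent — these are precisely the generators of $R_k(L;M)$; (4) check the spectral sequence degenerates, which should follow from strictness of morphisms of mixed Hodge structures and the purity of each $\gr^W$ piece; (5) establish functoriality by observing every map in the construction — the restriction maps, the Gysin maps, the identification of generators — is natural with respect to morphisms $(L;M)\to(L';M')$ of hyperplane data, i.e. with respect to adding or removing hyperplanes and pulling back along coordinate inclusions, so the presentation transports accordingly.

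The main obstacle I expect is correctly pinning down the relations $R_k(L;M)$, i.e. making the bookkeeping of the $E_1$ differentials canonical. The subtlety is the mixed behaviour when an $L_i$ and an $M_j$ are the \emph{same} hyperplane (or, more generally, when deleting $L$-hyperplanes degenerates the stratum structure of $M$): there the class $e_i$ and the class $f_j$ are not independent, and one gets a relation identifying (up to sign) $e_i\wedge(\cdots)$ with a boundary term on the $M$-side. Getting the signs right across the exterior algebra differentials, and verifying that the resulting relations are exactly generated in the bidegrees claimed (so that no higher relations appear, which is where degeneration of the spectral sequence via weight considerations is essential), is the delicate part; the purely homological input and the Orlik--Solomon presentation of each stratum's complement are by contrast standard. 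Once the $E_1$ page is identified with the claimed exterior-algebra quotient and shown to be the abutment, functoriality is essentially formal.
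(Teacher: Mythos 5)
Your machinery is essentially the paper's: resolve the relative cohomology by the \v{C}ech-type complex whose pieces are $H^\bullet(M_J\setminus M_J\cap L)$ for $J\subset\{1,\ldots,m\}$ (including $J=\varnothing$), apply the affinely generic Brieskorn--Orlik--Solomon description on each stratum, use the fact that each $E_1^{p,q}$ is pure of weight $2q$ to force degeneration at $E_2$ and to identify the filtration with the weight filtration, and get functoriality from naturality of the construction. (One caveat on the last point: in the paper the contraction/residue and K\"unneth maps are not purely formal; they are computed via the logarithmic de Rham model for the complex, so \enquote{essentially formal} is optimistic there, though the deletion maps are indeed immediate.)

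The genuine problem is your identification of the relation module $R_k(L;M)$, which is the actual content of the explicit presentation. You attribute the second family of relations to \enquote{the interaction of an $L_i$ with an $M_j$ when $L_i$ and $M_j$ coincide or are dependent}, and you flag this as the main difficulty. But under the hypothesis that $L\cup M$ is a normal crossing divisor no $L_i$ can coincide with (or be linearly dependent on) any $M_j$ --- a coincidence already violates the codimension condition for the corresponding two-element subset --- so that phenomenon is vacuous, and if the relations consisted only of the Orlik--Solomon vanishing relations on the strata the presentation would be wrong. The second family of relations, $e_I\otimes\bigl(\sum_{j\notin J'}\sgn(\{j\},J')f_{J'\cup\{j\}}\bigr)$, is nothing but the image of the $E_1$-differential, i.e.\ the \v{C}ech coboundary on the $M$-side induced by the restriction maps $H^k(M_{J'}\setminus M_{J'}\cap L)\rightarrow H^k(M_{J'\cup\{j\}}\setminus M_{J'\cup\{j\}}\cap L)$; it is present for every bi-arrangement, even for $L=\varnothing$ (and it is exactly what cuts $\gr^W_0$ down to one dimension in the application to dissection diagrams). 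Relatedly, to present $\gr^W_{2k}H(L;M)$ as a genuine quotient of $\Lambda^k(e)\otimes\Lambda^{n-k}(f)$, rather than a subquotient of the $E_1$-term, one needs $E_2^{n-k,k}=\mathrm{Coker}(d_1)$, which uses the vanishing $H^k(M_J\setminus M_J\cap L)=0$ for $|J|>n-k$; this follows from affine genericity of the restricted arrangements (Artin vanishing, Remark \ref{remArtinvanishing}), and your outline skips it. With these two corrections --- the coboundary relations in place of the spurious $L$--$M$ degeneracy relations, and the vanishing that turns the $E_2$-term into a cokernel --- your plan becomes the paper's proof.
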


The functoriality statement in Theorem \ref{introrelativeBOS}, that is made more precise in Theorem \ref{functoriality}, is crucial. Indeed, it allows us to relate the geometric situation coming from $D$ and the geometric situations coming from the terms $q_C(D)$ and $r_C(D)$ in formula (\ref{introformulacoproduct}).\\

It has to be noted (Remark \ref{remdegenerateinfty}) that the configurations of hyperplanes $\{L_1,\ldots,L_l,M_1,\ldots,M_m\}$ that we are looking at are normal crossing divisors inside $\C^n$, but are highly degenerate at infinity when viewed inside $\mathbb{P}^n(\C)$. The affine context enables us to take products of configurations of hyperplanes, an operation which is more involved in the projective setting.

	\subsection{Organization of the article}

	In \S 2 we introduce the dissection diagrams and the Hopf algebra $\D$, as well as its decorated variants. This section is purely combinatorial and requires no special knowledge of algebraic geometry. In \S 3 we focus on bi-arrangements of hyperplanes and prove Theorem \ref{introrelativeBOS}. This section can be read independently from the rest of the article. In \S 4 we introduce the dissection polylogarithms $I(D)$ and discuss some of their algebraic relations. In \S 5 we define the motivic dissection polylogarithms $I^\H(D)$ and prove Theorem \ref{intromaintheorem}. Three appendices (\ref{appA}, \ref{appB}, \ref{appC}) are devoted to the proofs of technical lemmas.

	\subsection{Conventions and notation}
	
	\begin{enumerate}
	\item \textit{(Coefficients)} Unless otherwise stated, all vector spaces, algebras, and Hopf algebras are defined over $\Q$, as well as tensor products of such objects. All (mixed) Hodge structures are defined over $\Q$.
	\item \textit{(Cohomology)} The cohomology groups $H^\bullet(X)$ and relative cohomology groups $H^\bullet(X,Y)$ implicitly denote the singular cohomology groups with coefficients in $\Q$. We will simply write $H^\bullet(X)\otimes\C$ for the singular cohomology groups with $\C$-coefficients. If $X$ is a manifold, the latter are naturally isomorphic, via the de Rham isomorphism, to the (analytic) de Rham cohomology groups tensored with $\C$, hence we allow ourselves to use smooth differential forms as representatives for cohomology classes.
	\item \textit{(Signs)} If $I$ and $J$ are disjoint subsets of a linearly ordered set $\{1,\ldots,n\}$, we define a sign $\sgn(I,J)\in\{\pm 1\}$ as follows. In the exterior algebra on $n$ independent generators $x_1,\ldots,x_n$, we write $x_I=x_{i_1}\wedge\cdots\wedge x_{i_k}$ for $I=\{i_1<\cdots<i_k\}$. Then $\sgn(I,J)$ is defined by the equation $x_{I\sqcup J}=\sgn(I,J) x_I\wedge x_J$. For example we get $\sgn(\{i_r\},I\setminus\{i_r\})=(-1)^{r-1}$.
	\end{enumerate}
	
	\subsection{Connections with other articles}
	
	The use of combinatorial Hopf algebras in the theory of mixed Tate motives and $K$-theory has already appeared in references such as \cite{blochkriz} and \cite{ganglgoncharovlevin}. The integrals that we study are part of a family first studied by K. Aomoto \cite{aomotostructure,aomotoaddition}, and whose relationship with mixed Tate motives and $K$-theory was investigated in \cite{bvgs}. We are particularly indebted to the ideas of A. B. Goncharov on motivic iterated integrals and the article \cite{goncharovgaloissym}.

	\subsection{Acknowledgements}
	The author thanks Francis Brown for many discussions and helpful comments on a preliminary version of this text, and the anonymous referee for their suggestions. This work was partially supported by ERC grant 257638 \enquote{Periods in algebraic geometry and physics}.

\section{A combinatorial Hopf algebra on dissection diagrams}

	\subsection{The combinatorics of dissection diagrams}
	
		For every integer $n$ we consider a regular oriented $(n+1)$-gon $\Pi_n$ with a distinguished vertex called the \textit{root}. We draw the polygons as circles so that $\Pi_0$ and $\Pi_1$ also make sense, hence the \textit{sides} of $\Pi_n$ are drawn as arcs between two consecutive vertices. A \textit{chord} of $\Pi_n$ is a line between two distinct vertices.
	
		\begin{defi}
		A {\bf dissection diagram} of degree $n$ is a set of $n$ non-intersecting chords of $\Pi_n$ such that the graph formed by the chords is acyclic.
		\end{defi}
		
		In all the examples the polygons will be drawn with a clockwise orientation. The root will be drawn at the bottom as a white dot, whereas the non-root vertices will be drawn as black dots.\\
		Since there are $n$ chords and $(n+1)$ vertices, the graph formed by the chords is actually a tree that passes through all $(n+1)$ vertices; in other words, it is a spanning tree of the complete graph on the $(n+1)$ vertices of $\Pi_n$.\\
		All the dissection diagrams of degree $\leq 3$ are pictured in Figure \ref{figdissectiondiagrams}.
		
		\begin{figure}[h]
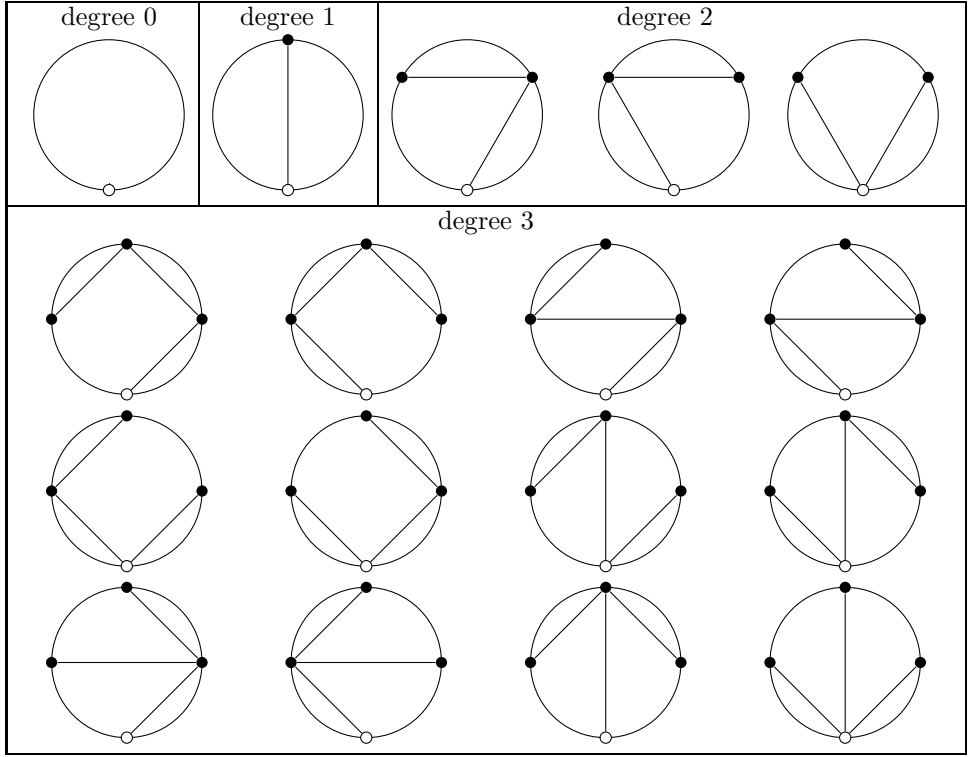

		\begin{center}\tabledissectiondiagrams{1}\end{center}
		\caption{The dissection diagrams of degree $\leq 3$}\label{figdissectiondiagrams}
		\end{figure}
		
		\begin{lem}
		The number of dissection diagrams of degree $n$ is $$d_n=\frac{1}{2n+1}\binom{3n}{n}.$$
		\end{lem}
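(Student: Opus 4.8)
The plan is to give a recursive bijective description of dissection diagrams that mirrors the branching structure of plane ternary trees, read off a functional equation for their generating function, and conclude by Lagrange inversion. (The number $\frac{1}{2n+1}\binom{3n}{n}$ is the Fuss--Catalan number counting ternary trees with $n$ internal vertices, so such a bijection is exactly what one expects, and is what the figures involving $(T_1,T_2,T_3)$ and $(D_1,D_2,D_3)$ suggest.)

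Fix the clockwise ordering $v_0,v_1,\dots,v_n$ of the vertices of $\Pi_n$, with $v_0$ the root. If $n\geq 1$ the chord graph of a dissection diagram $D$ is a spanning tree, so $v_0$ carries at least one chord; let $v_\rho$ be the root's \emph{clockwise-first} neighbour and $c=\{v_0,v_\rho\}$. Using that the chords are non-intersecting together with the minimality of $\rho$, I would check: (i) inside the sub-polygon cut off by $c$ on the side of $v_1,\dots,v_{\rho-1}$, no chord of $D$ reaches $v_0$, and $v_1,\dots,v_{\rho-1}$ are attached to $v_\rho$; contracting the side $c$ identifies this part of $D$ with a dissection diagram $D_1$ of degree $\rho-1$; (ii) the restriction of the tree to the complementary sub-polygon $(v_\rho,v_{\rho+1},\dots,v_n,v_0)$ is a spanning tree containing $c$, and removing $c$ splits its vertex set into two contiguous arcs $\{v_\rho,\dots,v_t\}$ and $\{v_{t+1},\dots,v_n,v_0\}$ (the standard planarity argument: otherwise two of the subtrees would cross); (iii) these arcs carry dissection diagrams $D_2$ of degree $t-\rho$ and $D_3$ of degree $n-t$. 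Since $(\rho-1)+(t-\rho)+(n-t)=n-1$, this defines a map $D\mapsto (D_1,D_2,D_3)$ onto triples of dissection diagrams with degrees summing to $n-1$, whose inverse glues the three sub-polygons back along $c$; I would check that the reconstructed chord set is again non-crossing and acyclic and has $v_\rho$ as the root's first neighbour, so that the two maps are mutually inverse. The degree-$0$ diagram plays the role of the leaf, so for all $n\geq 1$ we get $d_n=\sum_{a+b+c=n-1} d_a d_b d_c$.

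Hence the generating function $d(x)=\sum_{n\geq 0}d_n x^n$ satisfies $d(x)=1+x\,d(x)^3$. Setting $w=d(x)-1$ turns this into $w=x(1+w)^3$, so $w$ is the compositional inverse associated with $\phi(w)=(1+w)^3$, and Lagrange inversion gives, for $n\geq 1$,
$$d_n=[x^n]d(x)=[x^n]w=\tfrac{1}{n}\,[w^{n-1}](1+w)^{3n}=\tfrac{1}{n}\binom{3n}{n-1}=\tfrac{1}{2n+1}\binom{3n}{n},$$
while $n=0$ is immediate.

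The routine part is the closing Lagrange inversion. The one step needing genuine care is the bijection, specifically the two uses of non-intersection of the chords: that minimality of $\rho$ really forbids any further root-chord on the $D_1$-side of $c$, and that deleting $c$ cuts the complementary sub-polygon's spanning tree along a single arc rather than interleaving its two components. Both are planarity/Jordan-curve arguments, and they are the point where the defining hypothesis on dissection diagrams is actually used; everything else is bookkeeping of degrees.
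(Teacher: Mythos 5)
Your proposal is correct and follows essentially the same route as the paper: the same decomposition of a dissection diagram at the chord $c$ joining the root to its clockwise-first chord-neighbour into a triple $(D_1,D_2,D_3)$, yielding $d_n=\sum_{a+b+c=n-1}d_ad_bd_c$, hence $d(x)=1+x\,d(x)^3$, and Lagrange inversion applied to $d(x)-1$. The extra planarity checks you flag are exactly the points the paper treats briefly, so there is nothing to add.
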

		
		\begin{proof}
		We will not use this lemma in the rest of this article, so we just give a sketch of the proof. The sequence $(d_n)_{n\geq 0}$ counting the dissection diagrams in each degree satisfies the recurrence relation, for $n\geq 1$:
		\begin{equation}\label{eqrecdn}
		d_n=\sum_{\substack{i_1,i_2,i_3\geq 0 \\ i_1+i_2+i_3=n-1}}d_{i_1}d_{i_2}d_{i_3}.
		\end{equation}
		The reason for this recurrence relation is that a dissection diagram $D$ of degree $n$ is uniquely determined by a triple $(D_1,D_2,D_3)$ of dissection diagrams of respective degrees $(i_1,i_2,i_3)$ such that $i_1+i_2+i_3=n-1$.
		\begin{center}\isoternarytrees{1.8}\end{center}
		In the above picture, $\rho$ is the first (in clockwise order, starting at the root) non-root vertex of $\Pi_n$ that is attached to the root by a chord of $D$. Let $c$ be the chord between $\rho$ and the root.\\ 
		The $i_1$ chords that are on the left-hand side of $c$ form a rooted tree with $i_1$ internal vertices, whose root is $\rho$. Since the internal vertices of this tree are all on the polygon $\Pi_n$, we may view it as a dissection diagram $D_1$ of degree $i_1$.\\
		The $(n-1-i_1)$ chords that are on the right-hand side of $c$ form two connected components: one of cardinality $i_2$ that is attached to $\rho$, the other of cardinality $i_3$ that is attached to the root of $\Pi_n$. In the same fashion as above, we get dissection diagrams $D_2$ and $D_3$ of respective degrees $i_2$ and $i_3$, with $i_2+i_3=n-1-i_1$.\\
		Now let $d(x)=\sum_{n\geq 0}d_nx^n$ be the ordinary generating series for the enumeration of dissection diagrams.  
		The recurrence relation (\ref{eqrecdn}), together with $d_0=1$, implies the functional equation $d(x)=1+xd(x)^3$. Thus the Lagrange inversion formula \cite[Theorem 5.4.2]{stanleyvol2} applied to $f(x)=d(x)-1$ gives the result.
		\end{proof}
		
		\begin{rem}
		It is well-known that $d_n$ is also the number of ternary trees (planar rooted trees in which every internal vertex has exactly $3$ incoming edges) with $n$ internal vertices. The proof goes along the same lines: a planar ternary rooted tree $T$ is completely determined by its subtrees $T_1$, $T_2$, $T_3$ attached to the root. 
		\begin{center}\ternarytree \end{center}
		Thus we may recursively build a bijection between ternary trees and dissection diagrams. 
		\end{rem}

		\vspace{.15cm}
		
		We let $\D$ be the free commutative unital algebra (over $\Q$) on the set of dissection diagrams of positive degree. The degrees of the dissection diagrams induce a grading
		$$\D=\bigoplus_{n\geq 0}\D_n$$
		on $\D$.
		The unit $1$ of $\D$ will be identified with the dissection diagram $\vc{\corolla{0}{.3}}$ of degree $0$.\\ 
		In small degree, we have	
		\begin{center} $\D_0=\Q $ \hspace{1cm}  $\D_1=\Q\,\vc{\corolla{1}{0.3}}$ \hspace{1cm} $\D_2=\Q\,\vc{\dddegtworight{0.3}}\oplus\Q\,\vc{\dddegtwoleft{0.3}}\oplus\Q\,\vc{\corolla{2}{0.3}}\oplus\Q\,\vc{\corolla{1}{.3}\,\corolla{1}{0.3}}$\end{center}
		where $\vc{\corolla{1}{.3}\,\corolla{1}{0.3}}$ represents the square of the only dissection diagram $\vc{\corolla{1}{.3}}$ of degree $1$. For every $n\geq 0$, $\D_n$ is a finite-dimensional vector space.\\
		
			\paragraph{\textit{Conventions on dissection diagrams}}
			
			We introduce some labeling conventions on dissection diagrams. An example is shown in Figure \ref{figconventions}.\\
			
			The non-root vertices of $\Pi_n$ are labeled $1,\ldots,n$ following the orientation, $1$ being just after the root. The sides of $\Pi_n$ are labeled $0,1,\ldots,n$ in such a way that the side labeled $0$ is between the root and the vertex $1$. This side plays a special role in the sequel and is called the \textit{root side}. The other sides are called the \textit{non-root sides}: for $i=1,\ldots,n-1$, the side labeled $i$ is between the vertices $i$ and $i+1$, and the side labeled $n$ is between the vertex $n$ and the root.\\
			
		In a dissection diagram of degree $n$,  the $n$ chords form a spanning tree of the complete graph on the $(n+1)$ vertices of $\Pi_n$. There is thus a preferred orientation of all the chords, towards the root. We may then label the chords with $1,\ldots,n$ such that the chord labeled $i$ leaves the vertex labeled $i$.\\
		
		The sides of $\Pi_n$ are also implicitly oriented following the orientation of $\Pi_n$ (clockwise, in all our figures). Thus when we consider the $(n+1)$ sides of $\Pi_n$ together with the $n$ chords of a dissection diagram $D$, we get a directed graph with $(n+1)$ vertices and $(2n+1)$ edges that is denoted $\Gamma(D)$ and called the \textit{total directed graph} of $D$.
			
		\begin{rem}
		Even though we will not always include them in the pictures, the orientations of the sides and the chords, as well as the labelings of the vertices, sides and chords of a dissection diagram are implicit.
		\end{rem}	
			
		In the sequel it will be more convenient to consider dissection diagrams $D$  where the chords are labeled by some abstract set $\mathscr{C}(D)$ of cardinality $n$, and the sides of the polygon by some other abstract set $\mathscr{S}(D)$ of cardinality $(n+1)$, which are both linearly ordered.\\
		If we set $\mathscr{S}^+(D)=\mathscr{S}(D)\setminus\{\min(\mathscr{S(D)})\}$ for the set of non-root sides, the linear orderings give bijections
		\begin{equation}\label{conventionbijection}
		\mathscr{C}(D)\simeq\{1,\ldots,n\}\simeq\mathscr{S}^+(D).
		\end{equation}
		
		\begin{rem}
		When the context is clear, we will drop the dissection diagram $D$ from the notation and simply write $\mathscr{C}$, $\mathscr{S}$, $\mathscr{S}^+$, $\Gamma$.
		\end{rem}
		
		\begin{figure}[h]
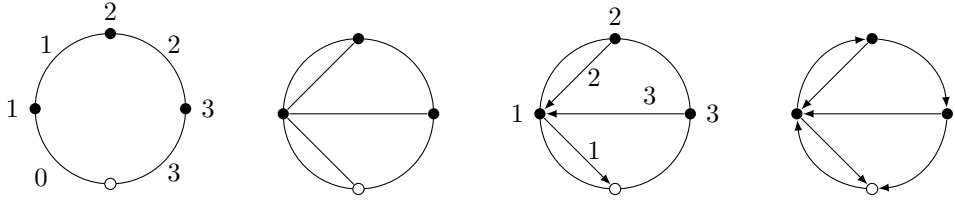

		\begin{center}\dddegthreelabelingpolygons{1}\hspace{.6cm} \dddegthree{1} \hspace{.6cm} \dddegthreelabelingarrows{1} \hspace{.6cm} \dddegthreedirected{1} \end{center}
		\caption{Conventions on dissection diagrams. Labeling the vertices and the sides of $\Pi_3$; a dissection diagram $D$ of degree $3$; the natural orientation and labeling of the chords of $D$; the total directed graph $\Gamma(D)$ of $D$.}\label{figconventions}\end{figure}

	\subsection{Operations on dissection diagrams}\label{sectionoperations}
	
		Let $D$ be a dissection diagram of degree $n$. We fix a subset $C\subset\c$ of chords of $D$. We introduce the notations $\s_C^+$, $q_C(D)$, $r_C(D)$, $\k_C(D)$ and $k_C(D)$ that will allow us to make sense of formula (\ref{defdelta}) below for the coproduct in $\D$. The reader may refer to Figure \ref{figexamplesoperations} for a special case.\\
		
		\paragraph{\textit{The set $\s_C^+\subset\s^+$}}	\hspace*{\fill}\\
		
		We first define a subset $\s_C^+\subset\s^+$ of the non-root sides of $D$, of cardinality $n-|C|$. It plays a sort of \enquote{dual role} to $C$, see Proposition \ref{propbasis}. In some simple cases (see Example \ref{excoproduct} below), $\s_C^+$ will simply be the complement $\overline{C}$ of $C$ in $\s^+$, using the identification (\ref{conventionbijection}).\\
		The planar graph $C\cup\s$ has $|C|+1$ faces. Each such face $\alpha$ is the interior of a polygon that we denote $\wt{\Pi}(\alpha)$, whose sides are sides of $\Pi_n$ and chords of $D$. If we denote by $\s_C(\alpha)$ the set of sides of $\Pi_n$ that are sides of $\wt{\Pi}(\alpha)$, we get a partition 
		\begin{equation}\label{partition}
		\s=\bigsqcup_\alpha \s_C(\alpha).
		\end{equation}
		
		\begin{lem}\label{lemSTacyclic}
		Let $J\subset\s$ be a subset of edges of $\Pi_n$, with $|C|+|J|=n$. Then the undirected graph $C\cup J$ is acyclic if and only if $J$ has the form
		$$J=\bigsqcup_\alpha J(\alpha) \,\textnormal{ \textit{with} }\, J(\alpha)=\s_C(\alpha)\setminus\{u_\alpha\}$$ for some choice of $u_\alpha\in\s_C(\alpha)$.
		\end{lem}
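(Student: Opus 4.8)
The plan is to analyze the structure of the planar graph $C \cup \s$ together with $J \subset \s$, using the fact that $\s$ itself is the cycle (the sides of the polygon $\Pi_n$) and $C$ is a forest of chords. The key observation is that $C \cup \s$ decomposes the disk bounded by $\Pi_n$ into $|C|+1$ faces $\wt\Pi(\alpha)$, and the partition (\ref{partition}) of $\s$ records how the polygon sides are distributed among the boundaries of these faces. I would first record the cardinality bookkeeping: since $C$ is a spanning forest of chords with $|C|$ edges and the ambient object is a disk, Euler's formula (or a direct inductive count) gives exactly $|C|+1$ faces, and summing $|\s_C(\alpha)|$ over $\alpha$ recovers $|\s| = n+1$, consistent with the partition.

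\textbf{The ``if'' direction.} Suppose $J = \bigsqcup_\alpha J(\alpha)$ with $J(\alpha) = \s_C(\alpha) \setminus \{u_\alpha\}$. First I would check the cardinality: $|J| = \sum_\alpha (|\s_C(\alpha)| - 1) = (n+1) - (|C|+1) = n - |C|$, as required. To see $C \cup J$ is acyclic, I would argue that contracting each face $\wt\Pi(\alpha)$ to a point is the natural move: within a single face $\wt\Pi(\alpha)$, the boundary is a cycle consisting of some chords of $C$ and the sides in $\s_C(\alpha)$; removing one side $u_\alpha$ breaks that boundary cycle into a path, so the edges of $C$ and $J(\alpha)$ lying on $\partial\wt\Pi(\alpha)$ form a tree (a path, in fact) on those vertices. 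Globally, $C \cup J$ is obtained by gluing these face-boundary trees along shared chords of $C$; since $C$ is itself acyclic and the face-adjacency structure of $C \cup \s$ inside the disk is a tree (the dual tree of the forest $C$ in the disk), no cycle can be created. I would make the ``dual tree'' statement precise: the faces $\wt\Pi(\alpha)$ are the nodes of a tree whose edges are the chords of $C$, because each chord of $C$ separates the disk locally into exactly two faces and $C$ has no cycles.

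\textbf{The ``only if'' direction.} Conversely, suppose $C \cup J$ is acyclic with $|C| + |J| = n$, equivalently $|J| = n - |C|$. I would argue by contradiction: if $J$ is \emph{not} of the stated form, then for some face $\alpha$ either $J$ contains all of $\s_C(\alpha)$, or $J$ omits at least two sides of $\s_C(\alpha)$ while (by the cardinality constraint) overfilling another face. In the first case, $J \supset \s_C(\alpha)$ together with the chords of $C$ on $\partial\wt\Pi(\alpha)$ forms the full boundary cycle of $\wt\Pi(\alpha)$ — a cycle in $C \cup J$, contradiction. To handle the general case cleanly, I would instead run the cardinality argument face by face: acyclicity forces $|J \cap \s_C(\alpha)| \le |\s_C(\alpha)| - 1$ for every $\alpha$ (otherwise the boundary cycle argument applies), and summing gives $|J| \le (n+1) - (|C|+1) = n - |C|$ with equality iff $|J \cap \s_C(\alpha)| = |\s_C(\alpha)| - 1$ for all $\alpha$, which is exactly the asserted form with $u_\alpha$ the unique omitted side.

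\textbf{Main obstacle.} I expect the delicate point to be the structural claim that the faces of $C \cup \s$ form a tree under chord-adjacency, and more precisely that a cycle in $C \cup J$ must be confined to (or detectable within) a single face's boundary once $C$ is acyclic — a cycle in $C \cup J$ could a priori weave through several faces, so I would need to argue that the planar (disk) embedding forces any such cycle to bound a union of faces, and then that the ``innermost'' face of that region already contains a boundary cycle inside $C \cup J$. This Jordan-curve-type argument, while intuitive from the pictures, is the part that needs care; alternatively one can avoid it entirely by using only the face-by-face cardinality inequality $|J \cap \s_C(\alpha)| \le |\s_C(\alpha)| - 1$ (whose proof is local to one face) together with the global count, which is the cleanest route and the one I would ultimately write up.
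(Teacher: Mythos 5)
Your \enquote{only if} argument is exactly the paper's proof: decompose $J=\bigsqcup_\alpha J(\alpha)$ along the partition of $\s$, note that $J(\alpha)=\s_C(\alpha)$ would put the entire boundary cycle of $\wt{\Pi}(\alpha)$ inside $C\cup J$, and then the count ($|J|=n-|C|$, $|\s|=n+1$, $|C|+1$ faces) forces $|J(\alpha)|=|\s_C(\alpha)|-1$ for every $\alpha$. The problem is your closing claim that the face-by-face cardinality inequality plus the global count is \enquote{the cleanest route and the one I would ultimately write up}: that route only proves that acyclicity implies the stated form. It cannot give the converse, because no edge count certifies acyclicity --- a graph on $n+1$ vertices with $n$ edges can perfectly well contain a cycle if it is disconnected. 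So the \enquote{if} direction (precisely the one the paper leaves to the reader) would be absent from the write-up you say you would produce; that is a genuine gap in the plan as stated.

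Your dual-tree sketch for the \enquote{if} direction is the right idea, but as phrased (\enquote{gluing face-boundary trees along shared chords cannot create a cycle}) it stumbles on exactly the point you flag: two distinct faces can meet in a vertex without sharing a chord (several chords incident to one vertex), and gluing trees at vertices can create cycles, so the dual tree by itself does not rule out a cycle weaving through several faces. The cheap correct completion is to prove connectivity rather than cycle-freeness directly: the boundary of each $\wt{\Pi}(\alpha)$ is a simple cycle consisting of chords of $C$ and the sides in $\s_C(\alpha)$ (the graph $C\cup\s$ contains the polygon, hence is $2$-connected), so deleting the single side $u_\alpha$ leaves a path spanning the vertices of $\wt{\Pi}(\alpha)$; adjacent faces share a chord, and the dual graph (faces as nodes, chords as edges) is connected with $|C|+1$ nodes and $|C|$ edges, hence a tree; therefore the union $C\cup J$ of these spanning paths is connected, contains all $n+1$ vertices, and has exactly $n$ edges, so it is a tree and in particular acyclic. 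With that replacement your proposal is complete, and on the direction the paper actually writes out it coincides with the paper's argument.
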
 		
		
		\begin{proof}
		Let us write $J=\sqcup_\alpha J(\alpha)$ with $J(\alpha)\subset\s_C(\alpha)$. If there exists an $\alpha$ such that $J(\alpha)=\s_C(\alpha)$ then $C\cup J$ contains the whole boundary of the polygon $\wt{\Pi}(\alpha)$, hence contains a cycle. Hence if $C\cup J$ is acyclic, then all the inclusions $J(\alpha)\subset\s_C(\alpha)$ are strict. Since $|J|=n-|C|$, we necessarily have $|J(\alpha)|=|\s_C(\alpha)|-1$ for each $\alpha$, hence  $J(\alpha)=\s_C(\alpha)\setminus\{u_\alpha\}$. We leave it to the reader to show that in that case, $J\cup C$ is indeed acyclic.
		\end{proof}
		
		Let us set
		$$\s_C^+=\bigsqcup_\alpha \s_C^+(\alpha)  \,\textnormal{ with }\, \s_C^+(\alpha)=\s_C(\alpha)\setminus\{\min(\s_C(\alpha))\}.$$
		It is a subset of $\s^+$ and has cardinality $n-|C|$.

		 Let $\overline{C}=\c\setminus C$ denote the set of the chords of $D$ which are not in $C$. Since the chords do not intersect each other, we have a partition $$\overline{C}=\bigsqcup_\alpha \overline{C}(\alpha)$$ where $\overline{C}(\alpha)$ is the set of chords of $D$ which are inside the polygon $\wt{\Pi}(\alpha)$.\\
		 It is clear that $\s_C^+(\alpha)$ and $\overline{C}(\alpha)$ have the same cardinality $|\s_C^+(\alpha)|=|\overline{C}(\alpha)|=n(\alpha)$, with $\sum_\alpha n(\alpha)=n-|C|$.
		 
		 \begin{rem}
		 Despite the notation, $\s_C^+$ does not depend only on $C$ but also on the dissection diagram $D$.
		 \end{rem}
		 
		 \begin{ex}\label{exoperations}
		 Let us focus on the dissection diagram $D$ of Figure \ref{figconventions} and put $C=\{3\}$ consisting only of the horizontal chord.
		 \begin{center}\dddegthreeexample{1} \end{center}
		 Then the partition $\s=\bigsqcup_\alpha\s_C(\alpha)$ is $\{0,1,2,3\}=\{1,2\}\sqcup\{0,3\}$
		 hence we get $\s_C^+=\{2\}\sqcup\{3\}=\{2,3\}$. The corresponding partition of $\overline{C}=\{1,2\}$ is $\overline{C}=\{2\}\sqcup\{1\}$.
		 \end{ex}
		
		\paragraph{\textit{The dissection diagrams $q_C^\alpha(D)$ and their product $q_C(D)$}}	\hspace*{\fill}\\
		
		Starting from the dissection diagram $D$, let us contract the chords from $C$. The resulting picture is a \enquote{cactus} of dissection diagrams glued together. These dissection diagrams are denoted by $q_C^\alpha(D)$ and we write $$q_C(D)=\prod_\alpha q_C^\alpha(D)$$ for their product in $\D$.\\
		
		More precisely, let us consider an individual polygon $\wt{\Pi}(\alpha)$ and contract all its sides that are chords of $C$. We get a polygon $\Pi(\alpha)$ that is naturally oriented. The dissection diagram $q_C^\alpha(D)$ naturally lives in $\Pi(\alpha)$. The set of its non-root sides is $\s_C^+(\alpha)$ and the set of its chords is $\overline{C}(\alpha)$. The degree of $q_C^\alpha(D)$ is $n(\alpha)$, hence the degree of $q_C(D)$ is $\sum_\alpha n(\alpha)=n-|C|$.\\
		
		Let us recall that we identify the dissection diagram $\vc{\corolla{0}{0.3}}$ of degree $0$ with the unit $1$ of $\D$, so that we do not write the dissection diagrams $q_C^\alpha(D)$ of degree $n(\alpha)=0$ in the product $q_C(D)$.
		
		\begin{ex}\label{exQ}
		We come back to the dissection diagram $D$ of degree $3$ from Example \ref{exoperations} with $C=\{3\}$. Contracting the horizontal chord labeled $3$ gives the picture
		$$\vc{\dddegthreeexampleQ{1}}$$
		$$\textnormal{ hence } \, q_C(D)=\vc{\dddegthreeexampleQbis{1}}\vc{\dddegthreeexampleQter{1}}
		= \vc{\directedcorolla{1}{1}}\,\vc{\directedcorolla{1}{1}}$$
		is the square of the dissection diagram of degree $1$.
		\end{ex}
		
		\paragraph{\textit{The dissection diagram $r_C(D)$; the set $\k_C(D)$ and its cardinality $k_C(D)$}}	\hspace*{\fill}\\		
		
		Going back to the initial dissection diagram $D$, let us look at the graph obtained by keeping only the chords from $C$ and contracting the sides from $\s_C^+$. By Lemma \ref{lemSTacyclic}, this process does not lead to cycles between the chords from $C$ and hence gives a dissection diagram whose set of chords is $C$ and whose set of non-root sides is $\overline{\s_C^+}=\s^+\setminus \s_C^+$. We call this dissection diagram $r_C(D)$. Its degree is $|C|$.\\
		
		It has to be noted that in general the directions of the chords in $r_C(D)$ may differ from the directions of the chords in $D$. We let $\k_C(D)\subset C$ be the set of these chords that one has to flip in the process of computing $r_C(D)$, and write $k_C(D)=|\k_C(D)|$ for its cardinality.
		
		\begin{ex}\label{exR}
		We come back to the dissection diagram $D$ of degree $3$ from Example \ref{exoperations} with $C=\{3\}$.  Keeping only the horizontal chord labeled $3$ gives the picture
		\begin{center}\dddegthreeexampleR{1} \end{center}
		and hence contracting the sides from $\s_C^+=\{2,3\}$ gives the picture
		$$r_C(D)=\vc{\dddegthreeexampleRbis{1}}=\vc{\directedcorolla{1}{1}}$$
		hence $r_C(D)$ is (unsurprisingly) the dissection diagram of degree $1$. Since in the above picture we had to flip the chord labeled $3$, we get $\k_C(D)=\{3\}$ and $k_C(D)=1$.
		\end{ex}
		
		\vspace{.1cm}
		
		\begin{figure}[h]\dddegthreeexamplefinal{1} \hspace{.2cm} \examplestable{.5}\caption{The computations of $\s_C^+$, $q_C(D)$, $r_C(D)$, $\k_C(D)$ and $k_C(D)$ for the dissection diagram $D$ from Example \ref{exoperations}}\label{figexamplesoperations}\end{figure}
		
	\subsection{Definition of the Hopf algebra}
	
		We define a map $$\Delta:\D\rightarrow \D\otimes\D$$ by setting
		\begin{equation}\label{defdelta} \Delta(D)=\sum_{C\subset\c(D)} (-1)^{k_C(D)}q_C(D)\otimes r_C(D)
		\end{equation}
		for $D$ a dissection diagram, and extending it to all of $\D$ as a morphism of algebras.\\
		For a dissection diagram $D$ of degree $n$, $q_C(D)$ has degree $n-|C|$ and $r_C(D)$ has degree $|C|$. Thus the coproduct $\Delta$ is compatible with the grading of $\D$, with components
		$$\Delta_{n-k,k}:\D_n\rightarrow\D_{n-k}\otimes\D_k$$
		corresponding to the subsets $C\subset\c(D)$ of cardinality $k$.\\
		
		For $C=\c(D)$ we get $\s_C=\varnothing$, $q_C(D)=1$, $r_C(D)=D$, $\k_C(D)=\varnothing$ and $k_C(D)=0$, hence the corresponding term in formula (\ref{defdelta}) is $\Delta_{0,n}(D)=1\otimes D$. For $C=\varnothing$, we get the term $\Delta_{n,0}(D)=D\otimes 1$.
		
		\begin{prop}\label{prophopfalgebra}
		Formula (\ref{defdelta}) gives $\D$ the structure of a graded connected commutative Hopf algebra.
		\end{prop}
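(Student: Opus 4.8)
I would first reduce the statement to the single assertion that $\Delta$ is coassociative. By construction $\D$ is a graded commutative algebra, $\Delta$ is an algebra morphism with $\Delta(1)=1\otimes1$, and $\Delta$ is compatible with the grading as already observed. Taking for counit the algebra morphism $\varepsilon\colon\D\to\D_0=\Q$ that kills every dissection diagram of positive degree, the two counit axioms are exactly the content of the paragraph preceding the proposition: after applying $\varepsilon$ to the left (resp.\ right) tensor factor in (\ref{defdelta}), only the term $C=\c(D)$ (resp.\ $C=\varnothing$) survives, and it equals $D$. Since $\D_0=\Q$, once coassociativity is proved $\D$ is a graded connected bialgebra, hence automatically a Hopf algebra, the antipode being built by the usual induction on the degree. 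So the whole content of the proposition is the coassociativity of $\Delta$.

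Because $\Delta$ is multiplicative it is enough to check $(\mathrm{id}\otimes\Delta)\Delta(D)=(\Delta\otimes\mathrm{id})\Delta(D)$ for a dissection diagram $D$, and I would do this by expanding both sides and comparing them term by term. On the left, the chord set of $r_C(D)$ is $C$, whence
\[(\mathrm{id}\otimes\Delta)\Delta(D)=\sum_{B\subseteq C\subseteq\c(D)}(-1)^{k_C(D)+k_B(r_C(D))}\,q_C(D)\otimes q_B(r_C(D))\otimes r_B(r_C(D)).\]
On the right, $\Delta(q_C(D))=\prod_\alpha\Delta(q_C^\alpha(D))$ and the chord set of $q_C^\alpha(D)$ is $\overline C(\alpha)$, so the subsets appearing there assemble into one subset $A\subseteq\overline C$; after renaming the old $C$ as $B$ and setting $C\setminus B=A$, this too becomes a sum over flags $B\subseteq C\subseteq\c(D)$, with term
\[(-1)^{k_B(D)+\sum_\alpha k_{(C\setminus B)(\alpha)}(q_B^\alpha(D))}\Big(\prod_\alpha q_{(C\setminus B)(\alpha)}(q_B^\alpha(D))\Big)\otimes\Big(\prod_\alpha r_{(C\setminus B)(\alpha)}(q_B^\alpha(D))\Big)\otimes r_B(D),\]
where now $\alpha$ runs over the faces of $B\cup\s$. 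Coassociativity then reduces to the following four identities, to be proved for every flag $B\subseteq C\subseteq\c(D)$:
\begin{enumerate}
\item $q_C(D)=\prod_\alpha q_{(C\setminus B)(\alpha)}(q_B^\alpha(D))$ (transitivity of chord contraction);
\item $r_B(r_C(D))=r_B(D)$ (transitivity of deletion);
\item $q_B(r_C(D))=\prod_\alpha r_{(C\setminus B)(\alpha)}(q_B^\alpha(D))$ (contraction and deletion commute);
\item $k_C(D)+k_B(r_C(D))\equiv k_B(D)+\sum_\alpha k_{(C\setminus B)(\alpha)}(q_B^\alpha(D))\pmod 2$ (additivity of the flip-count).
\end{enumerate}

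Identity (1) rests on the observation that the faces of $C\cup\s$ refine those of $B\cup\s$, each face $\alpha$ of $B\cup\s$ being cut into subfaces by the chords of $(C\setminus B)(\alpha)$ inside it; contracting all chords of $C$ at once then has the same effect — including on the induced roots, orientations, and side-labelings of (\ref{conventionbijection}) — as first contracting $B$ and then, inside each polygon $\Pi(\alpha)$, contracting the remaining chords. For identity (2) I would use Lemma \ref{lemSTacyclic} and the description of $\s^+_\bullet$ through the partition (\ref{partition}): the partition of $\s$ determined by $C$ refines the one determined by $B$, one has $\s^+_C(D)\subseteq\s^+_B(D)$, and the sides contracted in passing from $D$ to $r_B(D)$ are independent of whether one goes through $r_C(D)$. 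Identities (3) and (4) I would prove face by face, by identifying, for each face $\alpha$ of $B\cup\s$, the data carried by $q_B^\alpha(D)$ (its set $\s^+_\bullet$, its chord orientations, its relabelings) with the data induced on the corresponding part of $r_C(D)$; for (4) the key input is that reorienting the chords towards successive choices of root is composable, so that the parities of the flips add up.

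The hard part will be identities (3) and (4). Despite the notation, $\s^+_C(D)$ depends on all of $D$ and not only on $C$ (cf.\ the remark before Example \ref{exoperations}), so the real work is to maintain, through iterated contractions and deletions, a coherent bookkeeping of which vertex becomes the root of each newly created polygon, of how chords and sides get relabeled by the order-isomorphisms (\ref{conventionbijection}), and of which chords must be flipped to point at the new root — the data recorded by $\k_\bullet$. I would fix such a bookkeeping once and for all, carrying abstract linearly ordered chord- and side-sets along as the text already arranges, which turns (3) and (4) into finite combinatorial verifications; these can moreover be cross-checked against the explicit computations gathered around Figure \ref{figexamplesoperations} for the dissection diagrams of degree $\leq 3$.
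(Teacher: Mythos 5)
Your overall skeleton is exactly the paper's: everything reduces to coassociativity (counit and antipode being formal for a graded connected bialgebra), both sides are expanded over flags $B\subseteq C\subseteq\c(D)$ after the same re-indexing $C=B\sqcup\bigsqcup_\alpha(C\setminus B)(\alpha)$, and the equality is reduced to your identities (1)--(4), which are, up to relabelling, precisely the four statements of Lemma \ref{lemhopfalgebra}. Your treatment of (1) and (2) (transitivity of contraction, and the compatibility of the partitions of $\s$, via Lemma \ref{lemSTacyclic}) is in line with how the paper argues.

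The gap is in your identities (3) and, above all, (4). These are not \enquote{finite combinatorial verifications} once a bookkeeping is fixed: they are statements about all dissection diagrams and all flags, and cross-checking the diagrams of degree $\leq 3$ proves nothing. More seriously, the justification you offer for (4) --- that \enquote{reorienting the chords towards successive choices of root is composable, so that the parities of the flips add up} --- is exactly the assertion to be proved, not an argument for it: a given chord can a priori be flipped at different stages of the two compositions (once when contracting $\s^+$-sides to form an $r$-type diagram, again inside a contracted piece $q^\alpha$), and nothing in your sketch controls which chords get flipped by which operation, so no cancellation or additivity of parities follows. The paper needs all of Appendix \ref{appA} for this point: it first gives an intrinsic characterization of the flipped set $\k_C(D)$ (Lemmas \ref{lemcns1} and \ref{lemcns2}: a chord is flipped exactly when it is reached by a decreasing path of chords of $C$ emanating from a side in $\s_C^+$), and only with this in hand proves the $\s^+$-compatibilities and the set-level refinement $\k_{C'}(D)\sqcup \k_C(r_{C'}(D))=\k_C(D)\sqcup\bigsqcup_\alpha \k_{C'_\alpha}(q_C^\alpha(D))$ of Lemma \ref{techlem2}, of which your (3) and (4) are consequences. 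Without this characterization (or a substitute for it), the step where the sign $(-1)^{k_C(D)}$ is actually tested remains unproved, so your proposal does not yet establish coassociativity.
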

		
		\begin{proof}
		All there is to prove is that $\Delta$ is coassociative, since it is well-known that given a graded connected bialgebra there exists a unique antipode that makes it into a Hopf algebra. Let us fix a dissection diagram $D$ of degree $n$ and prove that $(\mathrm{id}\otimes\Delta)(\Delta(D))=(\Delta\otimes\mathrm{id})(\Delta(D))$.\\
		On the one hand we have $$(\mathrm{id}\otimes\Delta)(\Delta(D))=\sum_{C\subset C'\subset\c(D)} (-1)^{k_{C'}(D)+k_C(r_{C'}(D))}q_{C'}(D)\otimes q_C(r_{C'}(D))\otimes r_C(r_{C'}(D)).$$
		On the other hand we have $$(\Delta\otimes\mathrm{id})(\Delta(D))=\sum_{C\subset\c(D)}(-1)^{k_C(D)}\Delta(q_C(D))\otimes r_C(D).$$
		Let us recall that $q_C(D)=\prod_\alpha q_C^\alpha(D)$. For a given $\alpha$, the set of chords of $q_C^\alpha(D)$ is $\overline{C}(\alpha)$, hence  
		$$\Delta(q_C^\alpha(D))=\sum_{C'_\alpha\subset\overline{C}(\alpha)}(-1)^{k_{C'_\alpha}(q_C^\alpha(D))}q_{C'_\alpha}(q_C^\alpha(D))\otimes r_{C'_\alpha}(q_C^\alpha(D)).$$
		Let us perform the change of summation indices $C'=C\sqcup\bigsqcup_\alpha C'_\alpha$. The result then follows from the following lemma.
		\end{proof}

		\begin{lem}\label{lemhopfalgebra}
		\begin{enumerate}
		\item $q_{C'}(D)=\prod_\alpha q_{C'_\alpha}(q_C^\alpha(D))$.
		\item $q_C(r_{C'}(D))=\prod_\alpha r_{C'_\alpha}(q_C^\alpha(D))$.
		\item $r_C(r_{C'}(D))=r_C(D)$.
		\item $k_{C'}(D)+k_C(r_{C'}(D))=k_C(D)+\sum_\alpha k_{C'_\alpha}(q_C^\alpha(D))$.
		\end{enumerate}
		\end{lem}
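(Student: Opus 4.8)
The plan is to prove all four identities by unwinding the definitions of the operations, relying on two structural facts. First, the faces of the planar graph $C'\cup\s$ refine those of $C\cup\s$: every face of $C'\cup\s$ lies inside a unique face $\alpha$ of $C\cup\s$, and the faces sitting inside $\alpha$ are precisely the faces of the dissection diagram $q_C^\alpha(D)$ cut out by the chords $C'_\alpha$. Second, each operation $q_C$ or $r_C$ can be described as \emph{delete a prescribed set of chords, then contract a prescribed set of sides}, so that composing two such operations amounts to taking the union of the deleted chords and the union of the contracted sides. The four claims then become bookkeeping statements about these unions, together with a routine check that orientations, root sides, and linear orders are transported correctly.

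For part 3, I would first record the inclusion $\s_{C'}^+\subseteq\s_C^+$, which follows by comparing, for $\gamma\subseteq\alpha$, the set $\s_{C'}(\gamma)\setminus\{\min(\s_{C'}(\gamma))\}$ with $\s_C(\alpha)\setminus\{\min(\s_C(\alpha))\}$, using that the root side $\min(\s)$ is never removed. Then the faces of $C\cup\s(r_{C'}(D))$ are naturally indexed by the faces $\alpha$ of $C\cup\s(D)$, the face $\alpha$ now carrying the sides $\s_C(\alpha)\setminus\s_{C'}^+(\alpha)$, whose minimum is still $\min(\s_C(\alpha))$; hence $\s_C^+(r_{C'}(D))\sqcup\s_{C'}^+=\s_C^+(D)$ and $\overline{C'}\sqcup(C'\setminus C)=\overline{C}$, so that computing $r_C(r_{C'}(D))$ deletes exactly $\overline{C}$ and contracts exactly $\s_C^+(D)$, giving $r_C(D)$ verbatim. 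Parts 1 and 2 are variations on the same theme: in part 1 the cactus $q_{C'}(D)$ has one factor per face of $C'\cup\s$, and grouping these factors according to the face $\alpha$ of $C\cup\s$ that contains them yields exactly $\prod_\alpha q_{C'_\alpha}(q_C^\alpha(D))$; in part 2 one checks that in $r_{C'}(D)$ the chords $C$ lie on the boundary of the same $|C|+1$ faces (now indexed by $\alpha$) as in $D$, so contracting $C$ in $r_{C'}(D)$ produces $|C|+1$ pieces which, by the side/chord bookkeeping above, are exactly the $r_{C'_\alpha}(q_C^\alpha(D))$.

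The real content is part 4, the compatibility of signs. The starting point is an explicit description of the flip set: a chord $c=\{u,v\}\in C$, oriented $u\to v$ in $D$ (so $v$ is the endpoint of $c$ nearer the root in the spanning tree of $D$), belongs to $\k_C(D)$ if and only if in the polygon $r_C(D)$ the vertex-class of $u$ is nearer the root side than that of $v$; equivalently, orienting the dual tree on the faces of $C\cup\s$ towards the face containing the root side, $c$ is flipped precisely when it points away from that face. With this in hand I would split the chords of $C'$ in two groups. For $c\in C'\setminus C$, lying inside a face $\alpha$, whether $c$ is flipped in $r_{C'}(D)$ depends only on the part of the configuration inside $\wt{\Pi}(\alpha)$, which is exactly the data defining $q_C^\alpha(D)$ and the operation $r_{C'_\alpha}$, so $c\in\k_{C'}(D)\iff c\in\k_{C'_\alpha}(q_C^\alpha(D))$, matching these chords' contributions on the two sides. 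For $c\in C$, part 3 identifies $r_C(r_{C'}(D))$ with $r_C(D)$, so the orientations of $c$ in $D$, in $r_{C'}(D)$, and in $r_C(D)$ are three orientations of one chord, and the parity of the number of flips along $D\leadsto r_{C'}(D)\leadsto r_C(D)$ equals $[c\in\k_C(D)]$; summing over $c$ this already gives the identity modulo $2$, which is all that is used in the coassociativity proof of Proposition \ref{prophopfalgebra}. To upgrade it to the stated equality of integers one must rule out the only obstruction, namely a chord $c\in C$ flipped at \emph{both} steps; for this I would use that the root-side vertex-class only grows under passage from $r_{C'}(D)$ to $r_C(D)$ (because $\s_C^+\supseteq\s_{C'}^+$), which should force the flips of $c$ to be monotone.

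The hardest step, and the one I expect to require the most care, is exactly this last monotonicity claim (equivalently, exhibiting the honest bijection behind the integer identity): it is the single point where one cannot argue face-by-face or chord-by-chord, since deleting the chords $C'\setminus C$ when passing from $r_{C'}(D)$ to $r_C(D)$ re-routes the spanning tree, and one has to check that this re-routing, together with the growth of the root-side class, never reverses a chord of $C$ a second time.
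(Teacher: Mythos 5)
Your treatment of parts 1--3 is essentially the paper's own: each operation is ``delete chords, contract sides,'' and the identities reduce to the side-set bookkeeping $\s_{C'}^+(D)=\bigsqcup_\alpha\s_{C'_\alpha}^+(q_C^\alpha(D))$ and $\s_C^+(D)=\s_{C'}^+(D)\sqcup\s_C^+(r_{C'}(D))$, which are exactly parts 1 and 2 of Lemma \ref{techlem2} in Appendix \ref{appA}; that is fine, and your parity trick for the chords of $C$ (using part 3 to compare the three orientations of a chord along $D\leadsto r_{C'}(D)\leadsto r_C(D)$) is a genuine shortcut past the paper's inclusions 3(b)--(d). But part 4 asserts an equality of integers (which the paper also uses beyond the sign $-1$, e.g.\ for the deformed coproducts $\Delta^{(x)}$ of (\ref{defdeltalambda})), and your argument for it rests on two statements you never prove, which is precisely where the content of the lemma lies. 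First, the locality claim $\k_{C'}(D)\cap C'_\alpha=\k_{C'_\alpha}(q_C^\alpha(D))$: this is not automatic from ``the data inside $\wt{\Pi}(\alpha)$,'' because whether $c\in C'_\alpha$ is flipped in $r_{C'}(D)$ is defined via re-orientation towards the root of the \emph{whole} polygon after contracting \emph{all} of $\s_{C'}^+$, whereas $q_C^\alpha(D)$ is itself a contraction of the configuration in $\wt{\Pi}(\alpha)$ along the $C$-chords bounding it, with its own root; one must show the two re-rootings agree. Second, the disjointness $\k_{C'}(D)\cap\k_C(r_{C'}(D))=\varnothing$, which you flag yourself: the observation that the root vertex-class only grows (because $\s_{C'}^+\subseteq\s_C^+$) does not control the parent/child relation of the two endpoint classes of $c$ in the two contracted spanning trees, so by itself it does not give the monotonicity of flips.

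The missing idea is an effective characterization of $\k_C(D)$, and that is what the appendix supplies (Lemmas \ref{lemcns1} and \ref{lemcns2}): $c\in\k_C(D)$ if and only if some directed path of chords of $C$ feeding into $c$ starts at a vertex whose side lies in $\s_C^+$, equivalently conditions $(K1)$--$(K3)$; in particular every flipped chord is \emph{decreasing} with respect to the vertex labels. Both of your gaps close immediately from this: locality, because the witnessing path together with its initial side lies in a single polygon of the $C'$-dissection contained in $\wt{\Pi}(\alpha)$, so it is visible in $q_C^\alpha(D)$; and disjointness, because a chord flipped in $D\leadsto r_{C'}(D)$ becomes increasing in $r_{C'}(D)$ (contraction of sides preserves the linear order of the vertex classes), while any element of $\k_C(r_{C'}(D))$ must be decreasing there. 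Your proposed substitutes do not do this job: ``the class of $u$ is nearer the root than the class of $v$'' merely restates the definition of a flip, and the dual-tree criterion is unjustified and, under its natural readings, false --- e.g.\ in the diagram of Figure \ref{figexamplesoperations} with $C=\{2,3\}$ the chords $2\to 1$ and $3\to 1$ sit identically with respect to the rooted dual tree of faces (each oriented with the parent face on the same side), yet only the latter is flipped, and in Example \ref{exampleappendix} chords far from the root face are flipped. So the architecture of your part 4 is reasonable, but the two steps carrying the actual difficulty are missing, and the characterization that makes them provable is absent.
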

		
		\begin{proof}
		See \ref{appA}.
		\end{proof}
		
		\begin{ex}
		We may use the computations of Figure \ref{figexamplesoperations} in order to get \exampleformulacoproduct{.5}
		\end{ex}
		
		\begin{ex}\label{excoproduct}
		\begin{enumerate}
		\item For all $n\geq 0$ let $X_n$ be the dissection diagram of degree $n$ (\enquote{corolla}, see Figure \ref{figcorollapathtree}) with all chords pointing towards the root, with the convention $X_0=1$.\\
		 Then the formula for the coproduct is:
		$$\Delta(X_n)=\sum_{k=0}^n\left(\sum_{i_0+\cdots+i_k=n-k}X_{i_0}\cdots X_{i_k}\right)\otimes X_k.$$	
		Indeed, for a subset $C=\{i_0+1,i_0+i_1+2,\ldots,i_0+i_1+\cdots+i_{k-1}+k\}$, we get $\s_C^+=\overline{C}$, $q_C(X_n)=X_{i_0}\cdots X_{i_k}$, $r_C(X_n)=X_k$, $\k_C(X_n)=\varnothing$ and $k_C(X_n)=0$.\\
		For instance we get
		$$\Delta(X_3)=1\otimes X_3+3X_1\otimes X_2+(2X_2+X_1X_1)\otimes X_1+X_3\otimes 1.$$
		\item For all $n\geq 0$, let $Y_n$ be the dissection diagram of degree $n$ (\enquote{path tree}, see Figure \ref{figcorollapathtree}) consisting of the chords between $1$ and $2$, $2$ and $3$, $\ldots$, $(n-1)$ and $n$, $n$ and the root, with the convention $Y_0=1$. Then the formula for the coproduct is:
		$$\Delta(Y_n)=\sum_{k=0}^n \binom{n}{k}Y_{n-k}\otimes Y_k.$$
		Indeed, for any subset $C\subset\{1,\ldots,n\}$ of cardinality $k$, we get $\s_C^+=\overline{C}$, $q_C(Y_n)=Y_{n-k}$, $r_C(Y_n)=Y_k$, $\k_C(Y_n)=\varnothing$ and $k_C(Y_n)=0$.\\
		The above formula is reminiscent of the formula for the coproduct in the Hopf algebra $\Q[t]$ of functions on the additive group $\mathbb{G}_a$.
		\end{enumerate}		
		\end{ex}
		
		\begin{figure}[h]
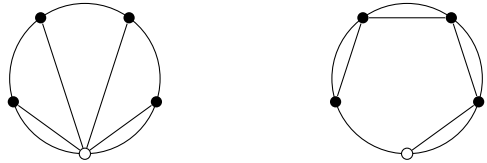
\begin{center}\corolla{4}{1} \hspace{2cm} \pathtreeFour{1}
		\end{center}	\caption{The corolla $X_4$ and the  path tree $Y_4$.}\label{figcorollapathtree}\end{figure}
		
		\begin{rem}
		The Hopf algebra $\D$ is a right-sided combinatorial Hopf algebra in the sense of \cite[5.7]{lodayroncocombinatorial}. According to Theorem 5.8 of \cite{lodayroncocombinatorial}, there is thus a structure of graded pre-Lie algebra on the free vector space spanned by dissection diagrams of positive degree (more precisely, its graded dual). It would be interesting to know if this pre-Lie structure has a simple presentation.
		\end{rem}
		
		\paragraph{\textit{A family of Hopf algebras}}
		
		Let $x$ be a fixed rational number. If one changes formula (\ref{defdelta}) to
		\begin{equation}\label{defdeltalambda} 
		\Delta^{(x)}(D)=\sum_{C\subset\c(D)} x^{k_C(D)}q_C(D)\otimes r_C(D)
		\end{equation}
		then the proof of Proposition \ref{prophopfalgebra} (replace $-1$ by $x$) shows that this defines a (graded connected commutative) Hopf algebra $\D^{(x)}$.\\
		Apart from the choice $x=-1$ which gives back $\D^{(-1)}=\D$, there are two other natural choices: for $x=1$ there is no sign in the formula; for $x=0$ (with the convention $0^0=1$) there is no sign and the sum is restricted to the subsets $C$ with $k_C(D)=0$. The formulas of Example \ref{excoproduct} are valid for any choice of $x$ since we always have $k_C(D)=0$.\\
		We may also consider $x$ as a formal parameter and view formula (\ref{defdeltalambda}) as a map of $\Q[x]$-algebras
		\begin{equation}
		\Q[x]\otimes\D\rightarrow \Q[x]\otimes\D\otimes\D\cong\left(\Q[x]\otimes\D\right)\otimes_{\Q[x]}\left(\Q[x]\otimes\D\right)
		\end{equation}
		 given by 
		$$D\mapsto \sum_{C\subset\c(D)}x^{k_C(D)}\otimes q_C(D)\otimes r_C(D).$$ 
		In terms of algebraic geometry, we get an algebraic family of affine group schemes parametrized by the affine line
		$$\mathrm{Spec}(\Q[x]\otimes\D)=\mathbb{A}^1\times\mathrm{Spec}(\D)\rightarrow \mathbb{A}^1=\mathrm{Spec}(\Q[x])$$
		with constant underlying scheme $\mathrm{Spec}(\D)$.
	
	\subsection{Decorations on dissection diagrams}
	
		In this paragraph we fix an abelian group $\Lambda$. We define a decorated version $\D(\Lambda)$ of the Hopf algebra $\D$.
	
		\subsubsection{Decorated directed graphs}\label{conventions}		
		
		Let $\Gamma$ be a directed graph. A \textit{$\Lambda$-decoration} on $\Gamma$ is the data of an element of $\Lambda$ for each edge of $\Gamma$. While performing operations on directed graphs, we will always keep in mind the two following rules for the decorations:
		\begin{itemize}
		\item Let us flip an edge, i.e. change its direction. We then multiply its decoration by $-1$.
		
		\begin{center}\changedirection \end{center}
		
		\item Let us contract an edge going from a vertex $v_-$ to a vertex $v_+$ which is decorated by an element $\alpha\in\Lambda$. For any edge of $\Gamma$ going to $v_-$, we replace its decoration $x$ by the decoration $x+\alpha$; for any edge of $\Gamma$ leaving from $v_-$, we replace its decoration $y$ by the decoration $y-\alpha$. The other decorations (including the decorations of the edges that touch $v_+$) stay unchanged.
		\vspace{.2cm}
		\begin{center}\contraction \end{center}
		\vspace{.1cm}
		We leave it to the reader to check that if one contracts a set of edges (that does not contain a loop), the resulting decorated graph does not depend on the order in which we perform the contractions.
		\end{itemize}

		\subsubsection{Decorated dissection diagrams and the Hopf algebra $\D(\Lambda)$}
		
		\begin{figure}[h]
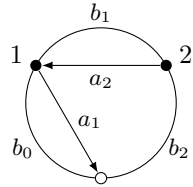
 \begin{center}\decorateddddegreetwo{1}\end{center}\caption{A decorated dissection diagram of degree $2$}\label{figdecorateddddegreetwo}\end{figure}
		
			A $\Lambda$-\textit{decorated dissection diagram} of degree $n$ is a dissection diagram $D$ of degree $n$ together with a $\Lambda$-decoration on the total directed graph $\Gamma(D)$. For $i=1,\ldots,n$, we denote by $a_i\in\Lambda$ the decoration of the chord $i$, and for $j=0,\ldots,n$, we denote by $b_j\in\Lambda$ the decoration of the side $j$ (see Figure \ref{figdecorateddddegreetwo}). We use the same letter to denote the decorated dissection diagram and its underlying dissection diagram obtained by forgetting the decorations.\\
			
  			We let $\D(\Lambda)$ be the free commutative unital algebra (over $\Q$) on the set of $\Lambda$-decorated dissection diagrams of positive degree. If $\Lambda=0$ then we recover $\D(0)=\D$. We want to generalize the Hopf algebra structure on $\D$ to all the $\D(\Lambda)$'s.\\
  			We define the coproduct $$\Delta:\D(\Lambda)\rightarrow\D(\Lambda)\otimes\D(\Lambda)$$ as in $\D$ by formula (\ref{defdelta}). The terms $q_C(D)$ and $r_C(D)$ are understood through the conventions of \S \ref{conventions}.
  			
  			\begin{ex}\label{exdecoratedQR}
			Let $D$ be a decorated dissection diagram of degree $3$ whose underlying dissection diagram is the one from Examples \ref{exoperations}, \ref{exQ}, \ref{exR}:
			\begin{center}\dddegthreeexampledecorated{1}\end{center}
			Then for $C=\{3\}$ we get
			$$q_C(D)=\hspace{.1cm}\vc{\dddegonedirecteddecorated{1}{\small{$a_2$}}{\small{$b_1$}}{\small{$b_2$+$a_3$}}}\hspace{.15cm}\vc{\dddegonedirecteddecorated{1}	{\small{$a_1$}}{\small{$b_0$}}{\small{$b_3$-$a_3$}}}$$
			and 
			$$r_C(D)=\hspace{.1cm}\vc{\dddegonedirecteddecoratedwrong{1}{\small{$a_3$-$b_3$}}{\small{$b_0$}}{\small{$b_1$+$b_2$+$b_3$}}}\hspace{.1cm}=\hspace{.1cm}\vc{\dddegonedirecteddecorated{1}{\small{$b_3$-$a_3$}}{\small{$b_0$}}{\small{$b_1$+$b_2$+$b_3$.}}} $$
			The last equality is the application of the convention related to flipping an edge (\S\ref{conventions}).
			\end{ex}
  			
  			We leave it to the reader to check that properties $1.$, $2.$ and $3.$ from Lemma \ref{lemhopfalgebra} remain true, as well as property $4.$, which is independent of the decorations. Hence the proof of Proposition \ref{prophopfalgebra} can be copied word for word and gives the following extension.
		
			\begin{prop}\label{prophopfalgebradec}
			For any abelian group $\Lambda$, formula (\ref{defdelta}) gives $\D(\Lambda)$ the structure of a graded connected commutative Hopf algebra. Moreover, for any morphism $\Lambda\rightarrow\Lambda'$ of abelian groups, the corresponding morphism $\D(\Lambda)\rightarrow \D(\Lambda')$ is a morphism of Hopf algebras. In other words, $\Lambda\leadsto\D(\Lambda)$ is a functor from the category of abelian groups to the category of Hopf algebras.
			\end{prop}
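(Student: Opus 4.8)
The plan is to reduce Proposition~\ref{prophopfalgebradec} to the already-established Proposition~\ref{prophopfalgebra} by observing that all the combinatorial content is independent of the decorations, so that only the bookkeeping of the $\Lambda$-labels needs checking. Concretely, the Hopf algebra axioms (coassociativity, counitality, existence of the antipode) for $\D(\Lambda)$ follow from the same four identities as in Lemma~\ref{lemhopfalgebra}. Properties~$1.$, $2.$ and $3.$ are equalities \emph{of decorated dissection diagrams}, while property~$4.$ involves only the sets $\k_\bullet$ of flipped chords, which are manifestly insensitive to the values of the decorations. Thus the crux is to verify that the contraction/deletion operations of \S\ref{sectionoperations}, now carried out on the total directed graph $\Gamma(D)$ equipped with its $\Lambda$-decoration according to the two rules of \S\ref{conventions}, still satisfy Lemma~\ref{lemhopfalgebra}~$1.$--$3.$ at the decorated level.

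First I would fix a decorated dissection diagram $D$ and nested subsets $C\subset C'\subset\c(D)$, and write $C'=C\sqcup\bigsqcup_\alpha C'_\alpha$ as in the proof of Proposition~\ref{prophopfalgebra}. For property~$1.$ (and similarly for $2.$), the point is that contracting the chords of $C'$ from $D$ can be performed in two stages: first contract the chords of $C$ (producing the cactus $q_C(D)=\prod_\alpha q_C^\alpha(D)$), then contract, inside each component $q_C^\alpha(D)$, the chords of $C'_\alpha$. Since we checked in \S\ref{conventions} that the result of contracting a (loop-free) set of edges of a decorated directed graph does not depend on the order of contractions, the two procedures yield the same decorated graph, hence the same decorated dissection diagram; this gives $q_{C'}(D)=\prod_\alpha q_{C'_\alpha}(q_C^\alpha(D))$ with decorations. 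Property~$2.$ is handled the same way, contracting the sides of $\s_{C'}^+$ in two stages matched with the partition $\s_{C'}^+=\s_C^+\sqcup\bigsqcup_\alpha\s_{C'_\alpha}^+(\cdots)$, again using order-independence of contractions; one also uses here that deleting a chord commutes with contracting sides disjoint from it, which is visible on $\Gamma(D)$. Property~$3.$, $r_C(r_{C'}(D))=r_C(D)$, is an equality at the level of underlying diagrams already proved, and on the decoration side it reduces to the observation that the decoration of a chord of $C$ in $r_C(D)$ is obtained by summing the original decorations of the chord together with the decorations of the sides that get contracted on the path from its endpoints to the appropriate face-boundary, and this sum is the same whether computed in one step from $D$ or in two steps via $r_{C'}(D)$, because the set of contracted sides is the same and contractions commute.

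Granting Lemma~\ref{lemhopfalgebra}~$1.$--$4.$ in the decorated setting, the proof of Proposition~\ref{prophopfalgebra} applies \emph{verbatim}: expanding $(\mathrm{id}\otimes\Delta)(\Delta(D))$ and $(\Delta\otimes\mathrm{id})(\Delta(D))$, performing the change of summation variables $C'=C\sqcup\bigsqcup_\alpha C'_\alpha$, and matching terms using $1.$--$3.$ for the diagrams and $4.$ for the signs $(-1)^{k_\bullet}$, one gets coassociativity. The terms $C=\c(D)$ and $C=\varnothing$ give the counit identities as before. Since $\D(\Lambda)$ is graded by the degree of dissection diagrams, connected (its degree-zero part is $\Q$), and commutative by construction, the standard fact that a graded connected bialgebra has a unique antipode finishes the proof that $\D(\Lambda)$ is a graded connected commutative Hopf algebra. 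Finally, functoriality in $\Lambda$ is immediate: a morphism $\varphi\colon\Lambda\to\Lambda'$ induces, by applying $\varphi$ to every decoration, an algebra map $\D(\Lambda)\to\D(\Lambda')$; it is compatible with $\Delta$ because the contraction and flipping rules of \S\ref{conventions} are expressed purely through the group operations of $\Lambda$ (addition, negation), which $\varphi$ respects, and because the combinatorial data $C$, $\s_C^+$, $\k_C(D)$ defining formula~(\ref{defdelta}) do not involve the decorations at all.

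The main obstacle I anticipate is not conceptual but organizational: carefully tracking the decorations through the two-stage contractions so that the equalities in Lemma~\ref{lemhopfalgebra}~$1.$--$3.$ genuinely hold \emph{on the nose} as decorated diagrams, rather than merely up to some implicit reparametrization. In particular one must be attentive to the interaction between contracting a side of $\s_C^+$ and the decorations of chords that border the corresponding face, and to the sign flips recorded by $\k_{C'_\alpha}(q_C^\alpha(D))$ versus $\k_{C'}(D)$ and $\k_C(r_{C'}(D))$; verifying that the decoration adjustments from these flips are consistent is the delicate point. This is exactly the content deferred to Appendix~\ref{appA}, and the decorated case adds only the straightforward (if tedious) verification that every manipulation there is an identity of elements of $\Lambda$.
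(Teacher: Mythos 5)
Your overall strategy coincides with the paper's: reduce everything to Proposition~\ref{prophopfalgebra} by checking that properties 1.--3.\ of Lemma~\ref{lemhopfalgebra} survive in the decorated setting (property 4.\ being decoration-independent), then copy the undecorated proof; the functoriality argument at the end is fine. The problem lies in the one substantive thing you add, namely the claim that the decorated versions of properties 1.\ and 2.\ follow from the order-independence of contractions established in \S\ref{conventions}. Order-independence compares different orders of contraction of the same set of edges \emph{in the same graph}; it says nothing about the comparison between the one-stage computation of $q_{C'}(D)$ and the two-stage computation in which the cactus is split into its components $q_C^\alpha(D)$ \emph{between} the two contractions. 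After the splitting, a chord $c\in C'_\alpha$ whose tail vertex lies on the boundary of another face (for instance because it is the head of a chord of $C$) is contracted in the absence of the edges of that other face; in the one-stage computation those edges are still attached to the tail of $c$, and the contraction rule shifts their decorations by $\pm$ the (possibly already modified) decoration of $c$. So the two procedures need not produce the same decorated components, and your assertion that they ``yield the same decorated graph'' is exactly the point that has to be proved.

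To see that this is a real issue and not mere bookkeeping, take the decorated diagram $D$ of Example~\ref{exdecoratedQR} (chords $1\to\mathrm{root}$, $2\to 1$, $3\to 1$ decorated $a_1,a_2,a_3$, sides decorated $b_0,\dots,b_3$), with $C=\{3\}$ and $C'=\{1,3\}$, so that $C'_\alpha=\{1\}$ sits in the face with sides $\{0,3\}$ and $C'_\beta=\varnothing$ in the face with sides $\{1,2\}$. Since $q_{C'_\alpha}(q_C^\alpha(D))=1$, the right-hand side of property 1.\ is $q_C^\beta(D)$, which by the paper's own example has chord decoration $a_2$, root side $b_1$ and remaining side $b_2+a_3$. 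On the other hand, contracting $C'=\{1,3\}$ in $\Gamma(D)$ with the rules of \S\ref{conventions}, the contraction of chord $1$ (whose tail is the vertex $1$) shifts the decorations of chord $2$ and side $1$, and, through the modified decoration of chord $3$, that of side $2$; the nontrivial component of $q_{C'}(D)$ comes out with decorations $a_2+a_1$, $b_1-a_1$, $b_2+a_3+a_1$. The two degree-one diagrams agree only up to a translation $\tau_i(\lambda)$ in the sense of \S\ref{sectionrelations}, and as such are \emph{distinct} generators of $\D(\Lambda)$; an analogous discrepancy occurs for property 2.\ (e.g.\ $C=\{3\}\subset C'=\{2,3\}$, where $q_C(r_{C'}(D))$ and $r_{C'_\beta}(q_C^\beta(D))$ again differ by a translation). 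So the identities you need do not come for free from order-independence: this is precisely the ``implicit reparametrization'' you flag in your last paragraph, and it must actually be resolved --- by pinning down the conventions for reading off the decorations of $q_C(D)$ and $r_C(D)$ so that Lemma~\ref{lemhopfalgebra} holds on the nose, or by showing the discrepancies cancel --- rather than asserted. (A smaller slip: the relation $\s_{C'}^+=\s_C^+\sqcup\cdots$ you invoke for property 2.\ cannot be right on cardinality grounds; the correct statements are those of Lemma~\ref{techlem2}.) As it stands, the crucial step of your proof is not established.
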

		
			\begin{rem}
			The variant of formula (\ref{defdeltalambda}) remains valid with decorations.
			\end{rem}
		
		\subsubsection{Generic decorations and the Hopf algebra $\D^{gen}(\Lambda)$}		
		
			Let $\Gamma$ be a directed graph. A \textit{simple cycle} of $\Gamma$ is an undirected cycle in $\Gamma$ that does not pass twice through the same vertex. For a given simple cycle in $\Gamma$, the \textit{total decoration} of the simple cycle is the signed sum of the decorations in the cycle, the sign being $+1$ if and only if the direction of the edge agrees with the direction of the cycle. We say that a $\Lambda$-decoration on $\Gamma$ is \textit{generic} if for every simple cycle in $\Gamma$, the total decoration of the cycle is non-zero.
		
			We say that a $\Lambda$-decorated dissection diagram is \textit{generic} if the $\Lambda$-decoration on $\Gamma(D)$ is generic. For example, the decorated dissection diagram from Figure \ref{figdecorateddddegreetwo} is generic if and only if the quantities $b_0+b_1+b_2$, $b_1+b_2-a_1$, $b_0-a_2+b_2$, $b_0+a_1$, $b_1+a_2$, $b_2-a_1-a_2$ are all $\neq 0$.\\
		
  			We leave it to the reader to check that the operations of reversal of arrows and contraction of \S \ref{conventions} preserve the genericity condition. As a consequence, the generic $\Lambda$-decorated dissection diagrams of positive degree generate a Hopf subalgebra 
  			$$\D^{gen}(\Lambda)\hookrightarrow\D(\Lambda).$$
  			The functoriality assertion of Proposition \ref{prophopfalgebradec} is valid for the Hopf algebras $\D^{gen}(\Lambda)$ if we restrict to \textit{injective} morphisms $\Lambda\hookrightarrow\Lambda'$.

\section{Bi-arrangements of hyperplanes and relative cohomology}

	After recalling some classical results on arrangements of hyperplanes, we introduce and study bi-arrangements of hyperplanes, focusing on the affinely generic case. The systematic study of bi-arrangements of hyperplanes and the corresponding relative cohomology groups will appear in a subsequent article.

	\subsection{Affinely generic arrangements of hyperplanes}
	
	Let $L=\{L_1,\ldots,L_l\}$ be an arrangement of hyperplanes in $\C^n$. The hyperplanes do not necessarily pass through the origin. As the notation suggests, the set $L$ is implicitly linearly ordered. We will use the same letter $L$ to denote the union $$L=L_1\cup\cdots\cup L_l$$ of the hyperplanes. For a subset $I\subset\{1,\ldots,l\}$, the stratum of $L$ indexed by $I$ is the affine space $$L_I=\bigcap_{i\in I}L_i$$ with the convention $L_\varnothing=\C^n$.\\
	
	We say that $L$ is \textit{affinely generic} if it is a normal crossing divisor inside $\C^n$. It means that for all $I$, $L_I$ is either empty or has codimension the cardinality $|I|$ of $I$.
	
	\begin{rem} If the $L_i$'s are in general position in $\C^n$ then $L$ is affinely generic, but the converse is not true. For instance, two parallel lines in $\C^2$ constitute an affinely generic arrangement. In other words, if we work in the projective space $\mathbb{P}^n(\C)$ by adding a hyperplane $L_0$ at infinity, the projective arrangement of hyperplanes $L_0\cup L_1\cup\cdots\cup L_n$ is not necessarily normal crossing.
	\end{rem}
	
	In the sequel, \textit{we will only consider affinely generic hyperplane arrangements}. This class of hyperplane arrangements is stable under the operations of deletion, contraction and product that we now describe.\\
	The \textit{deletion} of $L$ with respect to the last hyperplane $L_l$ is the arrangement $L'=\{L_1,\ldots,L_{l-1}\}$ in $\C^n$. We have a natural morphism $H^\bullet(\C^n\setminus L')\rightarrow H^\bullet(\C^n\setminus L)$.\\
	The \textit{contraction} of $L$ with respect to $L_l$ is the arrangement $L''=\{L_l\cap L_1,\ldots,L_l\cap L_{l-1}\}$ in $L_l\cong\C^{n-1}$ consisting of all the intersections of $L_l$ with the $L_i$'s, $i=1,\ldots,l-1$. We have a residue morphism $H^\bullet(\C^n\setminus L)(1)\rightarrow H^{\bullet-1}(L_l\setminus L'')$, where $(1)$ denotes a Tate twist.\\
	If $L^{(1)}\subset \C^{n_1}$ and $L^{(2)}\subset \C^{n_2}$ are two hyperplane arrangements, then the \textit{product} arrangement $L^{(1)}\times L^{(2)}\subset\C^{n_1+n_2}$ consists of the hyperplanes $L^{(1)}_{i_1}\times \C^{n_2}$ followed by the hyperplanes $\C^{n_1}\times L^{(2)}_{i_2}$. There is a K\"{u}nneth isomorphism $H^\bullet(\C^{n_1}\setminus L^{(1)})\otimes H^\bullet(\C^{n_2}\setminus L^{(2)})\cong H^\bullet(\C^{n_1+n_2}\setminus L^{(1)}\times L^{(2)}).$\\
		
	Let $\Lambda^\bullet(e_1,\ldots,e_l)$ denote the exterior algebra over $\Q$ with a generator $e_i$ in degree $1$ for each hyperplane $L_i$. For a set $I=\{i_1<\cdots<i_k\}\subset\{1,\ldots,l\}$ we set $e_I=e_{i_1}\wedge\cdots\wedge e_{i_k}$ with the convention $e_\varnothing=1$.\\
	Let $R_\bullet(L)$ be the ideal of $\Lambda^\bullet(e_1,\ldots, e_l)$ generated by the elements $e_I$ for subsets $I\subset\{1,\ldots,l\}$ such that $L_I=\varnothing$.\\
	
	The following theorem is a particular case of the Brieskorn-Orlik-Solomon theorem (for a detailed proof of the general case, see \cite[Theorems 3.126 and 5.89]{orlikterao}).
	
	\begin{thm}\label{BOSgeneric} Let $L$ be an affinely generic hyperplane arrangement.
	\begin{enumerate} 
	\item There is an isomorphism of graded algebras 
	\begin{equation}\label{BOSgenericiso}
	\Lambda^\bullet(e_1,\ldots,e_l)/R_\bullet(L)\stackrel{\cong}{\longrightarrow} H^\bullet(\C^n\setminus L)
	\end{equation}
	that sends $e_i$ to the class of the form $\omega_i=\dfrac{1}{2i\pi}\dfrac{df_i}{f_i}$, where $f_i$ is any linear form that defines $L_i$.
	\item This isomorphism is functorial in the following sense:
		\begin{enumerate}
		\item the deletion morphism $H^\bullet(\C^n\setminus L')\rightarrow H^\bullet(\C^n\setminus L)$ is given by $e_I\mapsto e_I$ for $I\subset\{1,\ldots,l-1\}$;
		\item the contraction morphism $H^\bullet(\C^n\setminus L)(1)\rightarrow H^{\bullet-1}(L_l\setminus L'')$ is given, for $I$ such that $l\notin I$, by $e_I\mapsto 0$ and $e_I\wedge e_l\mapsto e_I$;
		\item the K\"{u}nneth isomorphism $H^\bullet(\C^{n_1}\setminus L^{(1)})\otimes H^\bullet(\C^{n_2}\setminus L^{(2)})\cong H^\bullet(\C^{n_1+n_2}\setminus L^{(1)}\times L^{(2)})$ is given by $e_{I_1}\otimes e_{I_2}\mapsto e_{I_1\sqcup I_2}$.
		\end{enumerate}
	\end{enumerate}
	\end{thm}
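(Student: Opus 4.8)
The plan is to deduce this theorem from the general Brieskorn--Orlik--Solomon theorem for affine arrangements of hyperplanes \cite{orlikterao}; the only real work is to identify the Orlik--Solomon ideal with the monomial ideal $R_\bullet(L)$ under the affinely generic hypothesis, and then to read off what the general functoriality statements become in this case.

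For part (1), recall that the general theory presents $H^\bullet(\C^n\setminus L)$ as the quotient of $\Lambda^\bullet(e_1,\ldots,e_l)$ by the Orlik--Solomon ideal, via $e_i\mapsto[\omega_i]$. In the affine setting this ideal is generated by the monomials $e_S$ for subsets $S$ with $L_S=\varnothing$, together with the boundaries $\partial(e_S)$ for the \emph{dependent} subsets $S$ with $L_S\neq\varnothing$, i.e.\ those with $\operatorname{codim} L_S<|S|$. But the affinely generic hypothesis says precisely that every $S$ with $L_S\neq\varnothing$ has $\operatorname{codim} L_S=|S|$, so there are no dependent subsets with nonempty intersection and hence no boundary relations; the ideal is thus generated by the $e_S$ with $L_S=\varnothing$. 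Since $L_I=\varnothing$ whenever $S\subseteq I$ and $L_S=\varnothing$, the ideal generated by these monomials is exactly $R_\bullet(L)$, which proves (1). As a cross-check, in each degree $k$ both sides have dimension $\#\{I:|I|=k,\ L_I\neq\varnothing\}$, the monomials $e_I$ with $L_I\neq\varnothing$ forming a basis of the left-hand side.

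For part (2), each of the three maps is induced by an explicit operation on de Rham representatives, so I would compute its effect on the generators $[\omega_i]$ and then check compatibility with the ideals $R_\bullet$. The deletion morphism is pullback along the open inclusion $\C^n\setminus L\hookrightarrow\C^n\setminus L'$, which fixes each $\omega_i$ with $i\leq l-1$; since $L'_I=L_I$ for $I\subseteq\{1,\ldots,l-1\}$ it carries $R_\bullet(L')$ into $R_\bullet(L)$ and induces $e_I\mapsto e_I$. The contraction morphism is the (suitably normalized) residue map along $L_l$: by normal crossing, $\omega_i$ with $i<l$ is holomorphic near a generic point of $L_l$ and restricts there to the form attached to the hyperplane $L_l\cap L_i$ of $L_l$, while $\operatorname{Res}_{L_l}\omega_l=1$, so the Leibniz rule for residues gives $\operatorname{Res}_{L_l}\omega_I=0$ for $l\notin I$ and $\operatorname{Res}_{L_l}(\omega_I\wedge\omega_l)=\pm\,\omega_I|_{L_l}$, that is $e_I\mapsto 0$ and $e_I\wedge e_l\mapsto e_I$ with the sign conventions fixed in the introduction; compatibility with the ideals follows from $L''_I=L_l\cap L_I=L_{I\cup\{l\}}$. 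Finally, in the product arrangement $\omega^{(1)}_{i_1}$ and $\omega^{(2)}_{i_2}$ pull back to the forms attached to $L^{(1)}_{i_1}\times\C^{n_2}$ and $\C^{n_1}\times L^{(2)}_{i_2}$, exterior product of forms corresponds to wedge of classes, so the K\"{u}nneth isomorphism is $e_{I_1}\otimes e_{I_2}\mapsto e_{I_1\sqcup I_2}$; and since $L^{(1)}_{I_1}\times L^{(2)}_{I_2}=\varnothing$ exactly when one of the two factors is empty, this is the natural map from the tensor product of the presentations of $H^\bullet(\C^{n_1}\setminus L^{(1)})$ and $H^\bullet(\C^{n_2}\setminus L^{(2)})$ onto the presentation of $H^\bullet(\C^{n_1+n_2}\setminus L^{(1)}\times L^{(2)})$.

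I expect the contraction statement to be the only delicate point: one has to be sure that the residue of a form with no pole along $L_l$ vanishes --- which is where the normal crossing hypothesis enters, ruling out a pole of $\omega_i$ along a generic point of $L_l$ --- and to pin down the signs so that they agree with the conventions fixed in the introduction. Everything else is either a direct specialization of \cite{orlikterao} or a routine dimension and matroid bookkeeping.
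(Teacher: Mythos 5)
Your proposal is correct and follows the same route as the paper, which offers no independent proof but simply cites the statement as a particular case of the Brieskorn--Orlik--Solomon theorem (Orlik--Terao, Theorems 3.126 and 5.89), treating the functoriality as standard. Your contribution is merely to make explicit the routine details the paper leaves implicit: that affine genericity kills the $\partial(e_S)$ relations so the Orlik--Solomon ideal reduces to the monomial ideal $R_\bullet(L)$, and the verification of deletion, residue/contraction, and K\"unneth on logarithmic de Rham representatives.
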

	
	\begin{rem}\label{remBOSTate}
	The first part of Theorem \ref{BOSgeneric} implies that the mixed Hodge structure underlying $H^k(\C^n\setminus L)$ is pure of weight $2k$ and of Tate type: it is a direct sum of a certain number of copies of $\Q(-k)$.
	\end{rem}
	
	\begin{rem}\label{remArtinvanishing}
	If $I\subset\{1,\ldots,l\}$ has cardinality $>n$ then $L_I=\varnothing$ by the definition of an affinely generic hyperplane arrangement. Hence (\ref{BOSgenericiso}) implies that $H^k(\C^n\setminus L)=0$ for $k>n$. This is also a consequence of Artin vanishing since $\C^n\setminus L$ is an affine algebraic variety of dimension $n$.
	\end{rem}
	
	\subsection{Affinely generic bi-arrangements of hyperplanes}\label{paragraphrelativeBOS}
	
	A \textit{bi-arrangement of hyperplanes} $(L;M)$ in $\C^n$ is the data of two disjoint sets $L=\{L_1,\ldots,L_l\}$ and $M=\{M_1,\ldots,M_m\}$ of hyperplanes in $\C^n$. Equivalently, it is a $2$-partition of the underlying hyperplane arrangement $L\cup M=\{L_1,\ldots,L_l,M_1,\ldots,M_m\}$. As the notation suggests, both $L$ and $M$ are linearly ordered. We say that $(L;M)$ is \textit{affinely generic} if $L\cup M$ is, which means that it is a normal crossing divisor in $\C^n$. In the sequel, \textit{we will only consider affinely generic bi-arrangements of hyperplanes}.\\
	
	
	Among the relative cohomology groups $H^\bullet(\C^n\setminus L, M\setminus M\cap L)$, we will focus on the middle-degree one: we set $$H(L;M)=H^n(\C^n\setminus L,M\setminus M\cap L).$$
	According to Deligne \cite{delignehodge3}, $H(L;M)$ is endowed with a functorial mixed Hodge structure. It is clear (and will be re-proved in the proof of Theorem \ref{relativeBOS}) that this is actually a mixed Hodge-Tate structure. This means that for all $k$ we have $\gr_{2k+1}^WH(L;M)=0$, and $\gr_{2k}^WH(L;M)$ is isomorphic to a direct sum of the Tate structures $\Q(-k)$. The graded quotient $\gr_{2k}^WH(L;M)$ is $0$ for $k\notin\{0,\ldots,n\}$.\\
		
		\begin{thm}\label{relativeBOS}
		Let $(L;M)$ be an affinely generic bi-arrangement in $\C^n$. Then for all $k=0,\ldots,n$ we have a presentation 
			\begin{equation}\label{relativeBOSiso} 
			\gr^W_{2k}H(L;M)\cong \left(\Lambda^k(e_1,\ldots,e_l)\otimes\Lambda^{n-k}(f_1,\ldots,f_m)\right) / R_k(L;M) 
			\end{equation}
			where $R_k(L;M)$ is spanned by the elements
			\begin{itemize}
			\item $e_I\otimes f_J$ if $L_I\cap M_J=\varnothing$, $|I|=k$, $|J|=n-k$.
			\item $e_I\otimes \left(\displaystyle\sum_{j\notin J'}\sgn(\{j\},J')f_{J'\cup\{j\}}\right)$ for $|I|=k$, $|J'|=n-k-1$.
			\end{itemize}
		\end{thm}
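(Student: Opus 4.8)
The plan is to compute $H(L;M)$ by a cohomological descent (Mayer--Vietoris) spectral sequence for the pair and to read off $\gr^W_{2k}$ from its $E_2$-page, exploiting the purity of the cohomology of affinely generic arrangements. Write $U=\C^n\setminus L$ and $M'=M\setminus(M\cap L)=\bigcup_j M'_j$ with $M'_j=M_j\setminus(M_j\cap L)$; for $\varnothing\ne S\subseteq\{1,\dots,m\}$ set $M_S=\bigcap_{j\in S}M_j$ and $M'_S=M_S\setminus(M_S\cap L)$. Since $L\cup M$ is normal crossing, $M_S$ is an affine space of dimension $n-|S|$ (or empty), the restricted arrangement $L|_{M_S}$ is affinely generic in it, and $M'$ is a normal crossing divisor in the smooth variety $U$. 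I would then invoke Deligne's theory \cite{delignehodge3} to obtain a spectral sequence \emph{of mixed Hodge structures}, converging to $H^{p+q}(U,M')$, with
$$E_1^{p,q}=\begin{cases} H^q(U) & p=0,\\ \bigoplus_{|S|=p}H^q(M'_S) & p\ge 1,\end{cases}$$
whose $d_1$ is the alternating sum of the restriction maps along $M'_{S}\hookrightarrow M'_{S\setminus\{j\}}$ (and $M'_j\hookrightarrow U$).

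Next I would feed in Theorem \ref{BOSgeneric}: each $E_1^{p,q}$ is a direct sum of Orlik--Solomon algebras, namely $H^q(U)\cong\Lambda^q(e)/R_q(L)$ and $H^q(M'_S)\cong\Lambda^q(e)/R_q(L|_{M_S})$, the latter vanishing unless $M_S\ne\varnothing$; in particular every $E_1^{p,q}$ — hence every subquotient on later pages — is a pure Hodge--Tate structure of weight $2q$. This already shows that $H(L;M)$ is mixed Hodge--Tate. For $r\ge 2$ the differential $d_r\colon E_r^{p,q}\to E_r^{p+r,q-r+1}$ is a morphism of mixed Hodge structures between pure pieces of distinct weights $2q$ and $2(q-r+1)$, hence vanishes, so the spectral sequence degenerates at $E_2$. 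The abutment filtration on $H^n(U,M')$ is then a filtration by sub-mixed-Hodge-structures whose successive quotients $E_\infty^{n-k,k}=E_2^{n-k,k}$ are pure of the strictly increasing weights $2k$; it is therefore the weight filtration, and $\gr^W_{2k}H(L;M)\cong H^{n-k}(E_1^{\bullet,k},d_1)$.

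Then I would simplify this to a cokernel. By Artin vanishing (Remark \ref{remArtinvanishing}), $E_1^{n-k+1,k}=\bigoplus_{|S|=n-k+1}H^k(M'_S)=0$, since each $M'_S$ is affine of dimension $k-1<k$; hence $d_1\colon E_1^{n-k,k}\to E_1^{n-k+1,k}$ is zero and $\gr^W_{2k}H(L;M)\cong\mathrm{coker}\bigl(d_1\colon E_1^{n-k-1,k}\to E_1^{n-k,k}\bigr)$. Finally, using the bijection $f_j\leftrightarrow M_j$ to index size-$(n-k)$ subsets $J\subseteq\{1,\dots,m\}$, the BOS identifications give a surjection $\Lambda^k(e)\otimes\Lambda^{n-k}(f)\cong\bigoplus_{|J|=n-k}\Lambda^k(e)\twoheadrightarrow E_1^{n-k,k}$, $e_I\otimes f_J\mapsto\pm\,[e_I\in H^k(M'_J)]$ (the target being $H^n(U)$ when $k=n$). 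Since for $|I|=k$ one has $e_I\in R_k(L|_{M_J})$ exactly when $L_I\cap M_J=\varnothing$, the kernel is the span of the first family of generators of $R_k(L;M)$; and under the same identification $d_1$ sends the class of $e_I\otimes f_{J'}$ to $\sum_{j\notin J'}\sgn(\{j\},J')\,e_I\otimes f_{J'\cup\{j\}}$, so its image is the span of the second family (the generators with $L_I\cap M_{J'}=\varnothing$ already lie in the span of the first, so adding them is harmless). Passing to the quotient yields the stated presentation.

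The hard part will be the careful identification of $d_1$: one must check that the restriction maps $H^q(M'_{S'})\to H^q(M'_S)$ act by $e_I\mapsto e_I$ on Orlik--Solomon algebras — a pullback of logarithmic forms to an affinely generic section, close to but not literally contained in Theorem \ref{BOSgeneric}(2), so best recorded as a small separate lemma — and one must match the signs of the simplicial (\v Cech) differential with the paper's convention $\sgn(\{j\},J')$. By contrast, once the descent spectral sequence is set up in the category of mixed Hodge structures with the stated $E_1$-page, the remaining steps (purity $\Rightarrow$ $E_2$-degeneration $\Rightarrow$ identification of the abutment filtration with the weight filtration $\Rightarrow$ reduction to a cokernel via Artin vanishing) are formal.
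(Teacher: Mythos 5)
Your proposal is correct and follows essentially the same route as the paper: the paper realizes $H(L;M)$ as hypercohomology of the complex $\mathcal{K}^\bullet=\bigoplus_{|J|=p}(\iota_J)_*\Q_{M_J\setminus M_J\cap L}$ resolving $j_!\Q_{\C^n\setminus(L\cup M)}$, and its filtration spectral sequence is exactly your descent spectral sequence, after which the same chain of arguments (Deligne's MHS structure on the spectral sequence, purity from Theorem \ref{BOSgeneric}, degeneration at $E_2$, identification with the weight filtration, Artin vanishing, and the Orlik--Solomon identification of $d_1$) yields the presentation. Your remarks on the restriction maps acting by $e_I\mapsto e_I$ and on the redundancy of the second family of relations when $L_I\cap M_{J'}=\varnothing$ are points the paper passes over silently, and are handled correctly.
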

		
		\begin{proof}
		Let us denote by $j:\C^n\setminus(L\cup M)\hookrightarrow \C^n\setminus L$ the natural open immersion. Then $H^\bullet(\C^n\setminus L, M\setminus M\cap L)$ is the cohomology of the sheaf $j_!\Q_{\C^n\setminus(L\cup M)}$. One readily checks that we have a resolution
		$$0\rightarrow j_!\Q_{\C^n\setminus(L\cup M)} \rightarrow \Q_{\C^n\setminus L}\rightarrow \bigoplus_{i}(\iota_i)_*\Q_{M_i\setminus M_i\cap L} \rightarrow \bigoplus_{i<j}(\iota_{i,j})_*\Q_{M_{ij}\setminus M_{ij}\cap L}\rightarrow \cdots$$
		where $\iota_J:M_J\setminus M_J\cap L \hookrightarrow \C^n\setminus L$ denotes the natural closed immersion. More precisely let us set
		$$\mathcal{K}^p=\bigoplus_{|J|=p}(\iota_J)_*\Q_{M_J\setminus M_J\cap L}$$ 
		and $d:\mathcal{K}^p\rightarrow\mathcal{K}^{p+1}$ is given by the natural restriction morphisms
		$$(\iota_J)_*\Q_{M_J\setminus M_J\cap L}\rightarrow (\iota_{J\cup \{j\}})_*\Q_{M_{J\cup\{j\}}\setminus M_{J\cup\{j\}}\cap L}$$ 
		for $j\notin J$, multiplied by the sign $\sgn(\{j\},J)$. We then have a quasi-isomorphism
		$$j_!\Q_{\C^n\setminus(L\cup M)} \cong \mathcal{K}^\bullet.$$
		Let $w$ be the descending filtration on $\mathcal{K}^\bullet$ given by $w^p\mathcal{K}^\bullet=\mathcal{K}^{\geq p}$. The corresponding hypercohomology spectral sequence is
		$$E_1^{p,q}=\bigoplus_{|J|=p}H^q(M_J\setminus M_J\cap L) \Longrightarrow E_\infty^{p,q}=\gr_w^{p}H^{p+q}(\C^n\setminus L, M\setminus M\cap L)$$ 
		On the $E_1$-term, the differential $d_1$ is given by the natural restriction morphisms 
		$$H^q(M_J\setminus M_J\cap L)\rightarrow H^q(M_{J\cup\{j\}}\setminus M_{J\cup\{j\}}\cap L)$$ for $j\notin J$, multiplied by the sign $\sgn(\{j\},J)$.\\
		According to Deligne \cite[8.3.5]{delignehodge3}, this spectral sequence is a spectral sequence of mixed Hodge structures. Since by Remark \ref{remBOSTate} the mixed Hodge structures $H^q(M_J\setminus M_J\cap L)$ are pure of weight $2q$, the spectral sequence degenerates at $E_2$: $E_\infty=E_2$. The same argument implies that on $H^\bullet(\C^n\setminus L, M\setminus L\cap M)$, $w$ is (up to a shift) the canonical weight filtration.\\ 
		According to Remark \ref{remArtinvanishing}, we have  $H^k(M_J\setminus M_J\cap L)=0$ for $|J|>n-k$. Thus in degree $n$ we get
		$$\gr_{2k}^WH(L;M)\cong\mathrm{Coker}\left( \bigoplus_{|J'|=n-k-1}H^k(M_{J'}\setminus M_{J'}\cap L) \stackrel{d_1}{\rightarrow} \bigoplus_{|J|=n-k}H^k(M_J\setminus M_J\cap L) \right)$$
		which is obviously $0$ if $k\notin\{0,\ldots,n\}$.
	   Introducing basis elements $f_J$, Theorem \ref{BOSgeneric} tells us that $H^k(M_J\setminus M_J\cap L)$ has a presentation given by generators $e_I\otimes f_J$, $|I|=k$, and relations $e_I\otimes f_J=0$ if $L_I\cap M_J=\varnothing$. Since the differential is given by $d_1(e_I\otimes f_{J'})=e_I\otimes\left(\displaystyle\sum_{j\notin J'} \sgn(\{j\},J')f_{J'\cup\{j\}}\right)$, this implies the theorem.
		\end{proof}
		
		\begin{rem}\label{remlogforms}
		In order to do explicit computations, we introduce a useful acyclic model for the complex of sheaves $\mathcal{K}_\C^\bullet:=\mathcal{K}^\bullet\otimes\C$.
		For $(L;M)$ an affinely generic hyperplane arrangement, let us define a double complex of sheaves on $X$ 
		$$\Omega^{p,q}_{(L;M)}=\bigoplus_{|J|=p} (i_{M_J}^{\C^n})_*\Omega^q_{M_J}(\log L)$$
		where $\Omega^\bullet_{M_J}(\log L)$ is the complex of logarithmic forms defined in \cite[3.1]{delignehodge2}, and $i_{M_J}^{\C^n}$ is the inclusion of $M_J$ inside $\C^n$. The horizontal differential $\Omega^{p,q}_{(L;M)}\rightarrow \Omega^{p+1,q}_{(L;M)}$ is given by the restriction morphisms $\Omega^q_{M_J}(\log L)\rightarrow (i_{M_J\cap M_j}^{M_J})_*\Omega^q_{M_{J\cup \{j\}}}(\log L)$ for $j\notin J$, multiplied by the sign $\sgn(\{j\},J)$. The vertical differential $\Omega^{p,q}_{(L;M)}\rightarrow \Omega^{p,q+1}_{(L;M)}$ is the exterior differential on forms. We let $\Omega^\bullet_{(L;M)}$ denote the total complex. Using \cite[3.1.8]{delignehodge2}, one easily proves that we have a quasi-isomorphism
		$$\mathcal{K}^\bullet \stackrel{\cong}{\longrightarrow} \Omega^\bullet_{(L;M)}.$$
		\end{rem}
		
		We adapt the notions of deletion and contraction to the setting of (affinely generic) bi-arrangements. Furthermore, we allow ourselves to iterate them. Thus, for a subset $I_0\subset\{1,\ldots,l\}$ (resp. $J_0\subset \{1,\ldots,m\}$) we may consider the deletion $(L(I_0);M)$ (resp. $(L;M(J_0))$) obtained by forgetting the hyperplanes $L_i$, $i\notin I_0$ (resp. the hyperplanes $M_j$, $j\notin J_0$), and the contraction $(L_{I_0}|L(\overline{I_0});M)$ (resp. $(M_{J_0}|L;M(\overline{J_0}))$) obtained by considering the intersections of the hyperplanes with $L_{I_0}$ (resp. with $M_{J_0}$).\\
		On the relative cohomology groups $H(L;M)$, we get natural deletion/contraction morphisms, which are computed in the next theorem.
			
		\begin{thm}\label{functoriality}
		The isomorphism (\ref{relativeBOSiso}) is functorial in the following sense. 
			\begin{enumerate}
			\item For a subset $J_0\subset\{1,\ldots,m\}$, the deletion morphism $H(L;M)\rightarrow H(L;M(J_0))$ is given by $e_I\otimes f_J\mapsto 0$ if $J\not\subset J_0$ and $e_I\otimes f_J\mapsto e_I\otimes f_J$ if $J\subset J_0$.
			\item For a subset $J_0\subset\{1,\ldots,m\}$, the contraction morphism $H(M_{J_0}|L;M(\overline{J_0}))\rightarrow H(L;M)$ is given, for $J\subset\overline{J_0}$, by $$e_I\otimes f_J\mapsto e_I\otimes (f_{J_0}\wedge f_J)=\sgn(J_0,J) e_I\otimes f_{J_0\cup J}.$$
			\item For a subset $I_0\subset\{1,\ldots,l\}$, the deletion morphism $H(L(I_0);M)\rightarrow H(L;M)$ is given, for $I\subset I_0$, by $e_I\otimes f_J\mapsto e_I\otimes f_J$.
			\item For a subset $I_0\subset\{1,\ldots,l\}$ of cardinality $k_0$, the contraction morphism $H(L;M)(k_0)\rightarrow H(L_{I_0}|L(\overline{I_0});M)$ is given, for $I_0\not\subset I$, by $e_I\otimes f_J\mapsto 0$, and $$e_{I\cup I_0}\otimes f_J=\sgn(I,I_0)(e_I\wedge e_{I_0})\otimes f_J\mapsto \sgn(I,I_0) e_I\otimes f_J.$$
			\item The K\"{u}nneth morphism $H(L^{(1)};M^{(1)})\otimes H(L^{(2)};M^{(2)})\rightarrow H(L^{(1)}\times L^{(2)};M^{(1)}\times M^{(2)})$ is given by $(e_{I_1}\otimes f_{J_1})\otimes (e_{I_2}\otimes f_{J_2})\rightarrow (e_{I_1\sqcup I_2})\otimes (f_{J_1\sqcup J_2})$.
			\end{enumerate}
		\end{thm}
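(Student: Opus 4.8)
The plan is to deduce all five functoriality statements from the proof of Theorem~\ref{relativeBOS}. For each of the morphisms in question I would exhibit an explicit morphism between the resolving complexes of sheaves $\mathcal{K}^\bullet$ attached to the two bi-arrangements involved (or, in the case of a residue, between the logarithmic double complexes $\Omega^\bullet_{(L;M)}$ of Remark~\ref{remlogforms}), strictly compatible with the descending weight filtration $w$. Such a morphism induces a morphism of the associated hypercohomology spectral sequences, which both degenerate at $E_2$ as in the proof of Theorem~\ref{relativeBOS}; on the $E_1$-page it is assembled, summand by summand, from the corresponding morphism between the cohomology algebras $H^\bullet(M_J\setminus M_J\cap L)$ of hyperplane-arrangement complements, and this one is computed by the functoriality part of Theorem~\ref{BOSgeneric}. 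Since the proof of Theorem~\ref{relativeBOS} identifies $\gr^W_{2k}H(L;M)$ with the cokernel of the restriction map $d_1\colon\bigoplus_{|J'|=n-k-1}H^k(M_{J'}\setminus M_{J'}\cap L)\to\bigoplus_{|J|=n-k}H^k(M_J\setminus M_J\cap L)$, a morphism of degenerate spectral sequences of mixed Hodge structures induces the corresponding morphism of cokernels, and the formulas of the statement are then read off on the generators $e_I\otimes f_J$.

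I would first treat the two deletions and the Künneth map, which are all induced by honest morphisms of complexes of sheaves. For part~(1), deleting the $M_j$ with $j\notin J_0$ is realized by the termwise projection $\mathcal{K}^\bullet_{(L;M)}\to\mathcal{K}^\bullet_{(L;M(J_0))}$ killing the summands $(\iota_J)_*\Q_{M_J\setminus M_J\cap L}$ with $J\not\subset J_0$; this commutes with the sign-twisted restriction differentials since adding an index preserves both the condition $J\subset J_0$ and its negation, and on $E_1$ it is the projection onto the summands indexed by $J\subset J_0$. For part~(3), deleting the $L_i$ with $i\notin I_0$ is realized by the termwise restriction $\mathcal{K}^\bullet_{(L(I_0);M)}\to\mathcal{K}^\bullet_{(L;M)}$ coming from the open immersions $M_J\setminus M_J\cap L\hookrightarrow M_J\setminus M_J\cap L(I_0)$, which on $E_1$ is, in each summand, the deletion morphism $e_I\mapsto e_I$ ($I\subset I_0$) of Theorem~\ref{BOSgeneric}; affine genericity ensures the incidence conditions defining $R_k$ match. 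For part~(5), since extension by zero commutes with external products, $\mathcal{K}^\bullet_{(L^{(1)}\times L^{(2)};M^{(1)}\times M^{(2)})}$ is quasi-isomorphic to the total complex of $\mathcal{K}^\bullet_{(L^{(1)};M^{(1)})}\otimes\mathcal{K}^\bullet_{(L^{(2)};M^{(2)})}$ compatibly with $w$, and on $E_1$ the induced map is the summandwise Künneth isomorphism $e_{I_1}\otimes e_{I_2}\mapsto e_{I_1\sqcup I_2}$ of Theorem~\ref{BOSgeneric}, giving $(e_{I_1}\otimes f_{J_1})\otimes(e_{I_2}\otimes f_{J_2})\mapsto e_{I_1\sqcup I_2}\otimes f_{J_1\sqcup J_2}$.

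The contractions require slightly more. For part~(2) it suffices, by iterating, to take $J_0=\{j\}$: then the complex $\mathcal{K}^\bullet_{(M_j|L;M(\overline{\{j\}}))}$, pushed forward to $\C^n$, is canonically the subcomplex of $\mathcal{K}^\bullet_{(L;M)}$ spanned by the summands indexed by the $J'$ containing $j$, shifted by one, where the identification $J'\leftrightarrow J'\setminus\{j\}$ is made to carry the sign $\sgn(\{j\},J'\setminus\{j\})$ so that the differentials agree; this filtered subcomplex inclusion yields on $\gr^W_{2k}$ the map $e_I\otimes f_J\mapsto\sgn(\{j\},J)\,e_I\otimes f_{\{j\}\cup J}=e_I\otimes(f_j\wedge f_J)$, and iterating over the elements of $J_0$ produces $e_I\otimes f_J\mapsto e_I\otimes(f_{J_0}\wedge f_J)$. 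For part~(4) it suffices to take $I_0=\{i\}$ and to use the logarithmic model of Remark~\ref{remlogforms}: the fibrewise residue $\mathrm{Res}_{L_i}\colon\Omega^q_{M_J}(\log L)\to\Omega^{q-1}_{M_J\cap L_i}(\log L'')$, well defined because $L\cup M$ is normal crossing, assembles into a morphism of double complexes $\Omega^\bullet_{(L;M)}\to\Omega^\bullet_{(L_i|L(\overline{\{i\}});M)}$ lowering the degree by one and shifting the weight by two; on $E_1$ it is the contraction morphism $e_I\mapsto 0$ ($i\notin I$), $e_I\wedge e_i\mapsto e_I$ of Theorem~\ref{BOSgeneric}, so after $k_0$ iterations one obtains the stated formula together with the Tate twist $(k_0)$.

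The conceptual skeleton above is routine; the real work, and the main obstacle, is the bookkeeping. One must check that each complex-level map commutes with the sign-twisted restriction differentials, that it is strictly compatible with $w$ (so that it descends to the degenerate spectral sequences, hence to the $\gr^W$'s), and above all that the various signs combine correctly: the $\sgn(\{j\},J)$'s in the differentials of $\mathcal{K}^\bullet$, the Koszul reordering signs in the Künneth map, and the signs inherent in the residue must reproduce exactly the signs $\sgn(J_0,J)$ and $\sgn(I,I_0)$ of the statement. The residue of part~(4) is the delicate case, since there one must also verify that it shifts the weight filtration by $2k_0$ and underlies a morphism of mixed Hodge structures up to a Tate twist, which is where the normal crossing hypothesis on $L\cup M$ is genuinely used.
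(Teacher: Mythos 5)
Your proposal follows essentially the same route as the paper: realize each morphism at the level of the resolving complexes $\mathcal{K}^\bullet$ (with the sign-twisted inclusion for the $M$-contraction) or the logarithmic models of Remark \ref{remlogforms} (for the residue), check compatibility with the filtration $w$, and read the induced map off the degenerate spectral sequence via the functoriality part of Theorem \ref{BOSgeneric}. The only cosmetic differences are that you iterate single-hyperplane contractions/residues where the paper treats $J_0$ and $I_0$ in one step, and you use the external tensor product of the $\mathcal{K}^\bullet$ complexes for the K\"unneth map where the paper uses the cup-product of logarithmic forms; the sign bookkeeping you defer is likewise left implicit in the paper's proof.
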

		
		\begin{proof}
		\begin{enumerate}
		\item It is obvious.
		\item Let $k_0$ be the cardinality of $J_0$. Let us denote $\mathcal{K}_0$ the complex of sheaves corresponding to $(M_{J_0}|L;M(\overline{J_0}))$ as defined in the proof of Theorem \ref{relativeBOS}. The contraction morphism $H(M_{J_0}|L;M(\overline{J_0}))\rightarrow H(L;M)$ is defined by a morphism $\Phi:\mathcal{K}_0^{\bullet-k_0}\rightarrow\mathcal{K}^\bullet$.\\
		By definition we have $$\mathcal{K}_0^{p-k_0}=\bigoplus_{\substack{|J|=p-k_0\\J_0\cap J=\varnothing}}(\iota_{J_0\cup J})_*\Q_{M_{J_0\cup J}\setminus M_{J_0\cup J}\cap L}$$
		which is obviously a sub-sheaf of $\mathcal{K}^p$. We define $\Phi$ by multiplying the natural inclusion by the sign $\sgn(J_0,J)$ on the component indexed by $J$. We check that with this sign, $\Phi$ is a morphism of complexes of sheaves, and the claim follows.
		\item It is obvious.
		\item It is enough to do the proof over $\C$ and work with the models defined in Remark \ref{remlogforms}. The residue morphism is then given by morphisms (with obvious notation)
		$$\Omega^{p,q}_{(L;M)}\rightarrow (i_{L_{I_0}}^{\C^n})_*\Omega^{p,q-k_0}_{(L_{I_0}|L(\overline{I_0});M)}$$ which are induced by the residue morphisms
		$$\Omega^q_{M_J}(\log L) \rightarrow (i_{L_{I_0}\cap M_J}^{M_J})_*\Omega^{q-k_0}_{L_{I_0}\cap M_J}(\log L(\overline{I_0}))$$
		defined in \cite[3.1.5]{delignehodge2}. The formula follows.
		\item We also work over $\C$ with the models defined in Remark \ref{remlogforms}. It is easy to check that the K\"{u}nneth morphism is given by the cup-product
		$$(p_1)^*\Omega^{q_1}_{M^{(1)}_{J_1}}(\log L^{(1)})\otimes (p_2)^*\Omega^{q_2}_{M^{(2)}_{J_2}}(\log L^{(2)}) \rightarrow \Omega^{q_1+q_2}_{M^{(1)}_{J_1}\times M^{(2)}_{J_2}}(\log L^{(1)}\times L^{(2)})$$
		and the formula follows.
		\end{enumerate}
		\end{proof}

\section{Dissection polylogarithms}

	In this section, we focus on $\C$-decorated dissection diagrams, which we simply call decorated dissection diagrams.

	\subsection{The bi-arrangement attached to a decorated dissection diagram}
	
		\subsubsection{Definition}\label{defbiarrangementdd}
	
		We attach to any decorated dissection diagram $D$ of degree $n$ a bi-arrangement $(L;M)$ inside $\C^n$. The equations of the $L_i$'s depend on the chords of $D$ and their decorations, while the equations of the $M_j$'s depend on the decorations of the sides of the polygon (hence not on the combinatorics of $D$).\\
		Let us recall that the total directed graph $\Gamma(D)$ of $D$ is the graph whose vertices are the $(n+1)$ vertices of $\Pi_n$, and whose $(2n+1)$ directed edges are the chords of $D$ and the sides of $\Pi_n$, oriented clockwise.\\
		
		Let us work in the complex affine space $\C^n$ with coordinates $(t_1,\ldots,t_n)$. To each edge in $\Gamma(D$) we associate a hyperplane in $\C^n$ in the following way:
		\begin{itemize}
		\item To an edge $\stackrel{i}{\bullet}\stackrel{\alpha}{\longrightarrow} \stackrel{j}{\bullet}$ between two non-root vertices, we associate the hyperplane $t_i-t_j-\alpha=0$.
		\item To an edge $\stackrel{i}{\bullet}\stackrel{\alpha}{\longrightarrow} \circ$ that goes to the root, we associate the hyperplane $t_i-\alpha=0$.
		\item To an edge $\circ\stackrel{\alpha}{\longrightarrow} \stackrel{i}{\bullet}$ that comes from the root, we associate the hyperplane $-t_i-\alpha=0$.
		\end{itemize}
	
		Hence the rule is always the same: we interpret the vertex $i$ as the coordinate $t_i$, and the root as $0$. The third case above only occurs for the side labeled $0$. \\
		
		We label $L_1,\ldots,L_n$ the hyperplanes given by the chords of the decorated dissection diagram $D$, $L_i$ being given by the $i$-th chord (which by definition is the chord starting at the vertex $i$). Hence $L_i$ is defined by $t_i-t_j-a_i=0$ if the $i$-th chord goes to the $j$-th vertex, and by $t_i-a_i=0$ if it goes to the root.\\
		
		We label $M_0,M_1,\ldots,M_n$ the hyperplanes given by the sides of the polygon $\Pi_n$, in clockwise order. They are defined by
		$M_0=\{t_1=-b_0\}$, $M_j=\{t_j=t_{j+1}+b_j\}$ for $j=1,\ldots,n-1$, and $M_n=\{t_n=b_n\}$.\\
		
		This defines a bi-arrangement $$(L;M)=(L_1,\ldots,L_n;M_0,M_1,\ldots,M_n)$$ in $\C^n$.

		\begin{ex}
		Let us look at the decorated dissection diagram $D$ of degree $3$ from Example \ref{exdecoratedQR}. Then the bi-arrangement $(L_1,L_2,L_3;M_0,M_1,M_2,M_3)$ in $\C^3$ is defined by the equations
		$L_1=\{t_1-a_1=0\}$, $L_2=\{t_2-t_1-a_2=0\}$, $L_3=\{t_3-t_1-a_3=0\}$, $M_0=\{t_1=-b_0\}$, $M_1=\{t_1=t_2+b_1\}$, $M_2=\{t_2=t_3+b_2\}$, $M_3=\{t_3=b_3\}$.
		\end{ex}		
		
		The combinatorics of the bi-arrangement $(L;M)$ can be read directly off the dissection diagram, as the following lemma shows.
		
		\begin{lem}\label{lemrelationscycles}
		Let $D$ be a decorated dissection diagram with generic decorations.
		Let $I\subset\{1,\ldots,n\}$ be a set of chords of $D$ and $J\subset\{0,\ldots,n\}$ be a set of sides of $D$. We view $I\cup J$ as a subgraph of the total directed graph $\Gamma(D)$.
		\begin{enumerate}
		\item $L_I\cap M_J=\varnothing$ if and only if the graph $I\cup J$ contains an undirected cycle.
		\item If $L_I\cap M_J\neq \varnothing$, then $\mathrm{codim}(L_I\cap M_J)=|I|+|J|$.
		\end{enumerate}
		Thus the bi-arrangement $(L;M)$ is affinely generic.
		\end{lem}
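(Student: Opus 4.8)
The plan is to reduce the statement to linear algebra on the directed graph $\Gamma(D)$. Recall from \S\ref{defbiarrangementdd} that, with the convention $t_0:=0$ for the root vertex, every hyperplane of $L\cup M$ is attached to a directed edge of $\Gamma(D)$, and the hyperplane attached to an edge from a vertex $i$ to a vertex $j$ is exactly $\{t_i-t_j=c\}$, where $c\in\C$ is that edge's decoration. For a set $I$ of chords and a set $J$ of sides, viewed as a subgraph $I\cup J$ of $\Gamma(D)$ on the vertex set $\{0,1,\ldots,n\}$, the intersection $L_I\cap M_J$ is thus the solution set in $\C^n$ of the linear system $\{t_i-t_j=c_e\colon e\in I\cup J\}$, again with $t_0=0$.

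For part~$1$, suppose first that $I\cup J$ contains an undirected cycle; then it contains a simple cycle $\gamma$, which is also a simple cycle of $\Gamma(D)$. Adding up the equations $t_i-t_j=c_e$ along $\gamma$, each with the sign comparing the direction of $e$ with that of $\gamma$, the left-hand side telescopes to $0$, while the right-hand side equals the total decoration of $\gamma$, which is nonzero because $D$ has generic decorations. The system is therefore inconsistent, so $L_I\cap M_J=\varnothing$. Conversely, if $I\cup J$ is a forest the system is consistent: root each connected component --- at the vertex $0$ for the component containing it --- orient its edges away from the root, and propagate the prescribed coordinate differences; no constraint is violated since there are no cycles. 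This gives part~$1$.

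For part~$2$, assume $L_I\cap M_J\neq\varnothing$, so that $I\cup J$ is a forest by part~$1$. Let $s$ be the number of its connected components not containing the root, so that $I\cup J$ has $(n+1)-(s+1)=n-s$ edges, i.e.\ $|I|+|J|=n-s$. In the propagation of part~$1$, the coordinates on the component of the root are entirely determined by $t_0=0$, while each of the $s$ rootless components carries exactly one free coordinate; hence $L_I\cap M_J$ is an affine subspace of $\C^n$ of dimension $s$, and $\mathrm{codim}(L_I\cap M_J)=n-s=|I|+|J|$. (Equivalently: the linear forms $t_i-t_j$ indexed by the edges of a forest remain linearly independent after imposing $t_0=0$.) Finally, $(L;M)$ is affinely generic precisely when every intersection of a subset of the hyperplanes of $L\cup M$ is empty or of codimension equal to the number of hyperplanes; decomposing such a subset as a disjoint union $I\sqcup J$ of chords and sides, this is exactly the conjunction of parts~$1$ and~$2$.

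The only delicate point I anticipate is the bookkeeping around the root: because $t_0$ is fixed to $0$ rather than being a free coordinate, the configuration is not literally a deformation of the braid arrangement in $\C^{n+1}$, and one must check that fixing $t_0=0$ does not destroy the independence of the forest equations --- which it does not, since the span of the edge forms of the tree containing the root still has full dimension after deleting the coordinate $t_0$. The genericity hypothesis enters the argument in exactly one place, namely to turn the presence of a cycle into an inconsistent subsystem.
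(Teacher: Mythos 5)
Your proof is correct and takes essentially the same route as the paper's: the emptiness in the cycle case is obtained by the same telescoping sum along a simple cycle, whose total decoration is nonzero by genericity. Your forest/component-counting argument for the converse and the codimension count simply spells out what the paper dismisses with ``one easily sees,'' and your reduction of affine genericity to parts~1 and~2 matches the paper's concluding remark.
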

		
		\begin{proof}
		If there is an undirected path of total decoration $\lambda$ from the vertex $i$ to the vertex $j$ in $I\cup J$, then for any point $(t_1,\ldots,t_n)\in L_I\cap M_J$, we get $t_i=t_j+\lambda$. If $I\cup J$ contains an undirected cycle, then it contains some simple cycle with total decoration $\lambda\neq 0$. Let $i$ be a non-root vertex inside this simple cycle. For a point $(t_1,\ldots,t_n)$ in $L_I\cap M_J$, we get by definition $t_i=t_i+\lambda$, which is impossible. Thus $L_I\cap M_J=\varnothing$.\\
		Conversely, one easily sees that if $I\cup J$ does not contain an undirected cycle then $L_I\cap M_J\neq\varnothing$ and $\mathrm{codim}(L_I\cap M_J)=|I|+|J|$.
		\end{proof}
		
		\subsubsection{Operations on dissection diagrams and bi-arrangements}
		
		We can now explain the conventions from \S \ref{conventions} on dissection diagrams.
		
		\begin{itemize}
		\item If we change the direction of an edge and multiply its decoration by $-1$, this does not change the equation given by this edge.
		\item The convention for the contraction of edges accounts for the contraction of hyperplanes in bi-arrangements. Indeed, let us look at a contracted bi-arrangement $(L_i|L_1,\ldots,\widehat{L_i},\ldots,L_n;M)$. If we choose the coordinates on $L_i\cong \C^{n-1}$ to be $(t_1,\ldots,\widehat{t_i},\ldots,t_n)$, then the equations of the hyperplanes in this restricted bi-arrangement are exactly given by the edges of the graph resulting from the contraction of the $i$-th chord, with the convention from \S \ref{conventions}. The same is of course true for a contraction of some hyperplane $M_j$, $j\geq 1$.
		\end{itemize}
		This allows us to reinterpret the operations $q_C$ and $r_C$ in terms of contraction and deletion of bi-arrangements.
		
		\begin{lem}\label{lembiarrangementsRQ}
		Let $D$ be a decorated dissection diagram of degree $n$ and $(L;M)$ the corresponding bi-arrangement in $\C^n$. Let $C\subset\c$ be a set of chords of $D$.
		\begin{enumerate}
		\item For each $\alpha$, let $(L^{(\alpha)};M^{(\alpha)})$ be the bi-arrangement corresponding to the dissection diagram $q_C^\alpha(D)$. We have an isomorphism of bi-arrangements
		$$\prod_\alpha \left(L^{(\alpha)};M^{(\alpha)}\right) \cong (L_C|L(\overline{C});M).$$
		\item The bi-arrangement of the dissection diagram $r_C(D)$ is $$\left(\left. M_{\s_C^+}\right\vert L(C);M_0,M\left(\overline{\s_C^+}\right)\right).$$
		\end{enumerate}
		\end{lem}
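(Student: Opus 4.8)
The plan is to track the affine coordinates $(t_1,\dots,t_n)$ on $\C^n$ through the contraction operations, using two ingredients already available: the bullet points preceding the lemma, which say that contracting a chord (resp.\ a non-root side) of $D$ corresponds on the bi-arrangement side to restriction to the associated hyperplane $L_i$ (resp.\ $M_j$); and the decoration rules of \S\ref{conventions}, which record how the equations of the surviving hyperplanes transform under such a contraction.

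For part $1$, the chords of $D$ form a spanning tree, so $C$ is acyclic and Lemma \ref{lemrelationscycles} gives $L_C\neq\varnothing$ with $\mathrm{codim}\,L_C=|C|$. Contracting the chords of $C$ in the total directed graph $\Gamma(D)$ merges the $n+1$ vertices of $\Pi_n$ into the $n+1-|C|$ connected components of $C$ (allowing isolated vertices), keeps the edges $\overline{C}\sqcup\s$ untouched, and turns the $|C|+1$ faces of the planar graph $C\cup\s$ into the polygons $\Pi(\alpha)$, glued along the merged vertices into a ``cactus''. On coordinates, restricting to $L_C$ forces $t_i-t_{i'}$ to equal an explicit constant whenever $i$ and $i'$ lie in a common component of $C$; hence $L_C$ has coordinates indexed by the non-root components of $C$, i.e.\ by the non-root vertices of the cactus. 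Regarding the cactus as a rooted tree of polygons, every such vertex is a non-root vertex of exactly one $\Pi(\alpha)$ (being the root of any other polygon containing it), and since $\sum_\alpha n(\alpha)=n-|C|$ this yields a product decomposition $L_C\cong\prod_\alpha\C^{n(\alpha)}$ in which the coordinates are grouped by the faces $\alpha$. It then suffices to check, edge by edge, that the hyperplane $L_j$ for $j\in\overline{C}(\alpha)$ and the hyperplane $M_k$ for $k\in\s_C(\alpha)$ restrict to $L_C$ to equations in the coordinates of the $\alpha$-th factor only, and that these are precisely the equations attached to the edges of $\Gamma(q_C^\alpha(D))$ — which is exactly what the contraction rules of \S\ref{conventions} assert. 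Since $\bigsqcup_\alpha\overline{C}(\alpha)=\overline{C}$ and $\bigsqcup_\alpha\s_C(\alpha)=\s$, the $L$- and $M$-parts match those of $(L_C\,|\,L(\overline{C});M)$; the face $\alpha_0$ containing side $0$ gives the factor whose root side is $M_0$, in accordance with $\min(\s_C(\alpha_0))=0$. Choosing the orderings compatibly, this is the first isomorphism.

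Part $2$ is parallel, with sides in place of chords. Keeping only the chords of $C$ is the deletion producing $L(C)$, and contracting the sides of $\s_C^+$ is restriction to $M_{\s_C^+}=\bigcap_{k\in\s_C^+}M_k$; one has $M_{\s_C^+}\neq\varnothing$ with $\mathrm{codim}=|\s_C^+|=n-|C|$, because the subgraph $\s_C^+$ of $\Gamma(D)$ is obtained from the boundary cycle $\s$ of $\Pi_n$ by deleting the $|C|+1\geq 1$ sides $\min(\s_C(\alpha))$, hence is a disjoint union of paths, so Lemma \ref{lemrelationscycles} applies. After these two operations the surviving hyperplanes are those coming from the chords in $C$ and from the sides indexed by $\{0\}\cup\overline{\s_C^+}$, namely $M_0$ together with $M(\overline{\s_C^+})$; tracking the coordinates through the contraction, their restrictions to $M_{\s_C^+}$ are exactly the equations read off from $\Gamma(r_C(D))$ via the rules of \S\ref{conventions}. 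The reversals of chords recorded by $\k_C(D)$ are irrelevant here, since flipping an edge does not change the associated hyperplane (first bullet of the present subsection).

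The step I expect to be the main obstacle is the combinatorial bookkeeping in part $1$: making rigorous that $L_C$ splits as a product indexed by the faces $\alpha$, equivalently that a vertex of the cactus lying on several of the polygons $\Pi(\alpha)$ is a non-root vertex of exactly one of them. Granting this, both parts come down to a mechanical comparison of linear equations governed by the contraction conventions of \S\ref{conventions}, together with the non-emptiness and codimension input from Lemma \ref{lemrelationscycles}.
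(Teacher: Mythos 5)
Your part 2 is essentially the paper's argument (the paper disposes of it in one line by choosing coordinates on $M_{\s_C^+}$ and invoking the conventions of \S\ref{conventions}), and your preliminary observations on $L_C$ --- that its coordinates are indexed by the non-root components of $C$, and that each such cactus vertex is a non-root vertex of exactly one polygon --- are correct. But part 1 has a genuine gap at precisely the step you dismiss as a mechanical check. With the coordinates $t_i$, $i\in\overline{C}$, on $L_C$, grouped by the faces $\alpha$, it is \emph{not} true that a hyperplane $L_j$ with $j\in\overline{C}(\alpha)$, or $M_k$ with $k\in\s_C(\alpha)$, restricts to an equation in the $\alpha$-block coordinates only, and this is not what the contraction rules of \S\ref{conventions} assert: iterating those rules identifies the restriction to $L_C$ with the bi-arrangement read off the decorated \emph{cactus} $D/C$, which is not the disjoint union of the graphs $\Gamma(q_C^\alpha(D))$. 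Concretely, any edge of the polygon $\Pi(\alpha)$ incident to its root $\rho(\alpha)$ contributes, on $L_C$, an equation involving $t_{\rho(\alpha)}$, which is a coordinate of a \emph{different} block whenever $\rho(\alpha)$ is not the root of $D$; in the factor $q_C^\alpha(D)$ the same edge touches the root, so its equation has $0$ in place of $t_{\rho(\alpha)}$. In the paper's running example ($C=\{3\}$), $L_2$ restricts on $L_3$ to $t_2-t_1-a_2=0$, whereas the corresponding hyperplane of the factor is $t_2-a_2=0$, so the two bi-arrangements do not coincide under the naive identification of coordinates.

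What is missing is exactly the content of the paper's proof of part 1: the isomorphism is the affine change of variables $t'_i=t_i+t(\alpha)$ for $i\in\overline{C}(\alpha)$, where $t(\alpha)=0$ if the root of $q_C^\alpha(D)$ in $D/C$ is the root of $D$, and $t(\alpha)=t_{\rho(\alpha)}$ otherwise; each block must be translated by the coordinate of its polygon's root in the cactus (this is what the example following the lemma illustrates). Once this change of variables is inserted, your edge-by-edge comparison does reduce to the decoration rules of \S\ref{conventions} as you intend; without it, the asserted product decomposition into the factors $(L^{(\alpha)};M^{(\alpha)})$ is false. The combinatorial point you flagged as the main obstacle is in fact the easy part. (Your remark in part 2 that the reversals recorded by $\k_C(D)$ are irrelevant, since flipping an edge does not change its hyperplane, is correct.)
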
	
		
		\begin{proof}
		\begin{enumerate}
		\item Let us recall the partition $\overline{C}=\bigsqcup_\alpha \overline{C}(\alpha)$. The equations of the bi-arrangement $(L^{(\alpha)};M^{(\alpha)})$ are written in coordinates $t_i$, $i\in\overline{C}(\alpha)$, hence the product $\prod_\alpha (L^{(\alpha)};M^{(\alpha)})$ is a bi-arrangement in an affine space with coordinates $t_i$, $i\in\overline{C}$. The same is true of $(L_C|L(\overline{C});M)$. We then describe the isomorphism.\\
		Let us denote by $D/C$ the graph obtained by contracting the chords from $C$; its non-root vertices are labeled by $\overline{C}$. For each $\alpha$, the root of $q_C^\alpha(D)$ in $D/C$ is either the root of $D$ or a non-root vertex $\rho(\alpha)\in\overline{C}$. We let $t(\alpha)=0$ in the first case, and $t(\alpha)=t_{\rho(\alpha)}$ in the second case. The isomorphism is then defined by the change of variables $t'_i=t_i+t(\alpha)$ for $i\in \overline{C}(\alpha)$.
		\item It is straightforward, if we choose the coordinates $t_i$, $i\notin C$, on $L_i$.
		\end{enumerate}
		\end{proof}	
		
		\begin{ex}
		Let us look at Example \ref{exdecoratedQR} and illustrate the first point of the above lemma. On $L_3$ with coordinates $(t_1,t_2)$, the change of variables is defined by $t_1=t'_1$, $t_2=t'_2-t'_1$. Then for instance the equation $t_2-a_2=0$ becomes $t'_2-t'_1-a_2=0$. 
		\end{ex}

	\subsection{Definition of the dissection polylogarithms}\label{defdissectionpolylogs}

		We fix a decorated dissection diagram $D$ of degree $n$ and assume that its decorations are generic.\\

		\paragraph{\textit{The differential form $\omega_D$}}	\hspace*{\fill}\\
		For $i=1,\ldots,n$, let $\varphi_i$ be the linear equation for the hyperplane $L_i$ defined in the previous paragraph, of the form $\varphi_i=t_i-t_j-a_i$ if the $i$-th chord goes to the $j$-th vertex, and by $\varphi_i=t_i-a_i$ if it goes to the root.
		We then set $$\omega_D=\dfrac{1}{(2i\pi)^n}\mathrm{dlog}(\varphi_1)\wedge\cdots\wedge\mathrm{dlog}(\varphi_n)=\dfrac{1}{(2i\pi)^n}\frac{dt_1\wedge\cdots\wedge dt_n}{\varphi_1\cdots\varphi_n}\cdot$$
		 It is a meromorphic $n$-form on $\C^n$ and its polar locus is exactly the union $L=L_1\cup\cdots\cup L_n$.\\
		
		\paragraph{\textit{The integration simplex $\Delta_D$}}	\hspace*{\fill}\\
		In the previous paragraph we have defined a family of hyperplanes $M_0=\{t_1=-b_0\}$, $M_j=\{t_j=t_{j+1}+b_j\}$ for $j=1,\ldots,n-1$, and $M_n=\{t_n=b_n\}$. We set $M=M_0\cup M_1\cup\cdots\cup M_n$.\\
		We fix a singular $n$-simplex $\Delta_D$ inside $\C^n\setminus L$ such that for all $j=0,\ldots,n$, $\partial_j\Delta_D\subset M_j$. The existence of such a simplex is guaranteed by the fact that the decorations being generic, $L\cup M$ is a normal crossing divisor inside $\C^n$ (Lemma \ref{lemrelationscycles}).
		
		\begin{defi}
		We set $$I(D)=\int_{\Delta_M}\omega_D\,\,\,\in\C$$ and call it the \textit{dissection polylogarithm} attached to the dissection diagram $D$.
		\end{defi}
		
		The above integral is absolutely convergent since the integration simplex $\Delta_D$ does not meet the polar locus $L$ of the form $\omega_D$.\\
		
		As the examples in the next paragraph will show, the integral $I(D)$ really depends on $\Delta_D$ (though only via its homology class $[\Delta_D]\in H_n(\C^n\setminus L,M\setminus M\cap L)$). Thus, the notation $I(D)$ is abusive. We allow ourselves that abuse for at least two reasons. Firstly, there is no canonical way of choosing $[\Delta_D]$ for \textit{all} decorated dissection diagrams; if one looks at specific families of dissection diagrams and/or decorations (see the examples in the next paragraph) then this may sometimes be achieved. Secondly, we will replace (see Definition \ref{defmotivicdissectionpolylog}) the dissection polylogarithms $I(D)$ by motivic versions $I^\H(D)$ that only depend on the decorated diagram $D$, and not on the homology class of $\Delta_D$.
		
		\begin{rem}\label{remdegenerateinfty}
		A dissection polylogarithm is a special case of an Aomoto polylogarithm in the sense of \cite{bvgs}. To make the connection with the setting of \cite{bvgs} precise, one has to work in the projective setting, adding the hyperplane at infinity $L_0$. One has to notice that in this case we get a pair of simplices $(L;M)$ inside $\mathbb{P}^n(\C)$ which is not in general position: it is highly degenerate at infinity. Thus we are not in the case studied by J. Zhao in \cite{zhaogeneric}.
		\end{rem}
		
	\subsection{Examples of dissection polylogarithms}\label{exdissectionpolylogs}
	We study some families of dissection polylogarithms.\\
	
		\begin{figure}[h]
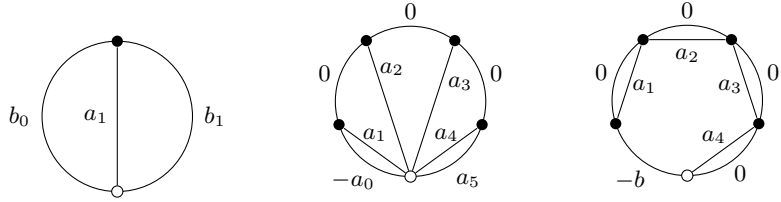

		\begin{center}\decoratedlogarithm{1}\hspace{1cm}\decoratedcorollaFour{1}\hspace{1cm}\decoratedpathtreeFour{1}\end{center}
		\caption{A decorated dissection diagram of degree $1$; the decorated corolla corresponding to the iterated integral $\mathbb{I}(a_0;a_1,a_2,a_3,a_4;a_5)$; the decorated path tree corresponding to the $\mathbb{J}$-integral $\mathbb{J}(b;a_1,a_2,a_3,a_4)$.}\label{figdecoratedcorollaFour}
		\end{figure}
		
		\paragraph{\textit{Degree $1$: logarithms}}	\hspace*{\fill}\\
		
		Let $D$ be a decorated dissection diagram of degree $1$ (see Figure \ref{figdecoratedcorollaFour}). The genericity assumption on the decorations reads: 
		$$a_1+b_0\neq 0 , a_1-b_1\neq 0 , b_0+b_1\neq 0$$ 
		We have $\varphi_1=t-a_1$ so that $L_1=\{a_1\}$ and $\omega_D=\frac{1}{2i\pi}\frac{dt}{t-a_1}$. We have $M_0=\{-b_0\}$ and $M_1=\{b_1\}$, so that $\Delta_D$ is any continuous path from $-b_0$ to $b_1$ in $\C\setminus\{a_1\}$. We then have
		$$I(D)=\dfrac{1}{2i\pi}\int_{-b_0}^{b_1} \frac{dt}{t-a_1}=\dfrac{1}{2i\pi}\log\left(\dfrac{a_1-b_1}{a_1+b_0}\right).$$
		As is well-known, this number is well-defined up to an integer, depending on the number of times that the path of integration winds around $a_1$.\\
		
		\paragraph{\textit{Corollas and iterated integrals}}	\hspace*{\fill}\\
		
		This example generalizes the previous one. Let us consider the case where $D$ is a corolla of degree $n$, which is the case when all chords of $D$ point towards the root. In this case we have $\varphi_i=t_i-a_i$ for all $i=1,\ldots,n$, so that $$\omega_D=\dfrac{1}{(2i\pi)^n}\dfrac{dt_1\wedge\cdots\wedge dt_n}{(t_1-a_1)\cdots(t_n-a_n)}\cdot$$
		By performing the change of variables $$t'_1=t_1,t'_2=t_2+b_1,t'_3=t_3+b_1+b_2,\ldots,t'_n=t_n+b_1+\cdots+b_n$$	
		we can always assume that the decorations on the sides of $D$ are all $0$ except for the first and the last one. We then put $a_0=-b_0$ and $a_{n+1}=b_n$, so that the genericity condition reads: $a_i\neq a_j$ for $0\leq i\neq j\leq n+1$ (see Figure \ref{figdecoratedcorollaFour}).\\
		We denote by $\Delta(a_0,a_{n+1})$ the integration simplex $\Delta_D$. Its boundary is given by the hyperplanes $t_1=a_0$, $t_j=t_{j+1}$ for $j=1,\ldots,n$, and $t_n=a_{n+1}$. The corresponding dissection polylogarithm is denoted
		 \begin{equation}\label{defitint}
		 \I(a_0;a_1,\ldots,a_n;a_{n+1})=\dfrac{1}{(2i\pi)^n}\int_{\Delta(a_0,a_{n+1})}\dfrac{dt_1\cdots dt_n}{(t_1-a_1)\cdots(t_n-a_n)}\cdot
		 \end{equation}
		 These integrals are iterated integrals on the space $\C\setminus\{a_1,\ldots,a_n\}$ and have been much studied by A. B. Goncharov (see \cite{goncharovmultiplepolylogs} and \cite{goncharovgaloissym}).\\
		  In this particular case, one way of choosing the integration simplex $\Delta(a_0,a_{n+1})$ is by specifying a continuous path $\gamma:[0,1]\rightarrow\C\setminus\{a_1,\ldots,a_n\}$ such that $\gamma(0)=a_0$ and $\gamma(1)=a_{n+1}$, and considering the singular simplex
		   $$\{0\leq t_1\leq\cdots\leq t_n\leq 1\}\rightarrow (\C\setminus\{a_1,\ldots,a_n\})^n , (t_1,\ldots,t_n)\mapsto (\gamma(t_1),\ldots,\gamma(t_n)).$$
		   For that reason, the study of the integrals (\ref{defitint}) is closely related to the algebro-geometric properties of the fundamental groups of the spaces $\C\setminus\{a_1,\ldots,a_n\}$.\\
		   In general, the dissection polylogarithms cannot be interpreted directly as iterated integrals in the above sense (however, see Theorem \ref{reductionitint} for an abstract statement on a reduction to iterated integrals).\\
		
		\paragraph{\textit{Path trees and $\mathbb{J}$-polylogarithms}}	\hspace*{\fill}\\
		
		 Let us consider the case where $D$ is a path tree of degree $n$. In this case we get $$\omega_D=\dfrac{1}{(2i\pi)^n}\frac{dt_1\wedge\cdots\wedge dt_n}{(t_1-t_2-a_1)(t_2-t_3-a_2)\cdots (t_{n-1}-t_n-a_{n-1})(t_n-a_n)}\cdot$$
	 	As in the previous example, we may perform a change of variables so that the edge decorations are $b_0=-b$, $b_1=\cdots=b_{n-1}=b_n=0$ (see Figure \ref{figdecoratedcorollaFour}).\\
	 	Let us write, for all $I\subset\{1,\ldots,n\}$, $a_I=\displaystyle\sum_{i\in I}a_i$. Then the genericity condition on the decorations reads: for all $i=1,\ldots,n$, $a_i\neq 0$ and for all $I\subset\{1,\ldots,n\}$, $a_I\neq b$ (which includes the condition $b\neq 0$ for $I=\varnothing$).\\ 
	 	The corresponding dissection polylogarithm is denoted
	 	$$\mathbb{J}(b;a_1,\ldots,a_n)=\dfrac{1}{(2i\pi)^n}\int_{\Delta(b,0)}\frac{dt_1\cdots dt_n}{(t_1-t_2-a_1)\cdots (t_{n-1}-t_n-a_{n-1})(t_n-a_n)}\cdot$$
		 	
	\subsection{Relations among dissection polylogarithms}\label{sectionrelations}
	
		We describe certain families of relations between dissection polylogarithms that one can describe combinatorially on the dissection diagrams.\\
	
		\paragraph{\textit{Translations}} \hspace*{\fill}\\
		
		Let $D$ be a decorated dissection diagram of degree $n$; let us fix a non-root vertex $i\in\{1,\ldots,n\}$ and $\lambda\in\C$. Let $\tau_i(\lambda).D$ be the decorated dissection diagram obtained from $D$ by adding $\lambda$ to the decoration of every edge of $\Gamma(D)$ going to $i$, and substracting $\lambda$ from the decoration of every edge of $\Gamma(D)$ leaving $i$:
		
		\begin{center}\relationtranslation\end{center}
		
		\begin{prop}\label{r1}
		The decorations of $\tau_i(\lambda).D$ are generic if and only if the decorations of $D$ are generic. In this case we have the equality
		 $$(R1):\,I(D)=I(\tau_i(\lambda).D).$$
		\end{prop}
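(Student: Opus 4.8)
The plan is to prove both assertions by exhibiting an explicit change of variables on $\C^n$ that identifies the bi-arrangement attached to $D$ with the one attached to $\tau_i(\lambda).D$, so that the integral $I(D)$ is literally unchanged. Concretely, I would consider the affine automorphism $\phi_i(\lambda)\colon\C^n\to\C^n$ that sends $(t_1,\ldots,t_n)$ to $(t_1,\ldots,t_{i-1},t_i+\lambda,t_{i+1},\ldots,t_n)$. The key observation is that the recipe of \S\ref{defbiarrangementdd} for translating an edge $\stackrel{a}{\bullet}\stackrel{\alpha}{\longrightarrow}\stackrel{b}{\bullet}$ of $\Gamma(D)$ into a hyperplane $t_a - t_b - \alpha = 0$ interacts with $\phi_i(\lambda)$ exactly in the way prescribed by the operation $\tau_i(\lambda)$: an edge \emph{entering} $i$ becomes $t_a - (t_i+\lambda) - \alpha = t_a - t_i - (\alpha+\lambda) = 0$, which is the hyperplane of the edge with decoration $\alpha+\lambda$; an edge \emph{leaving} $i$ becomes $(t_i+\lambda) - t_b - \alpha = t_i - t_b - (\alpha - \lambda) = 0$, matching the decoration $\alpha - \lambda$; any edge not touching $i$ is unchanged; and the special cases of edges going to or coming from the root (the coordinate $0$) are handled the same way since $0$ plays the role of a fixed vertex. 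This holds simultaneously for chords ($L$'s) and sides ($M$'s).

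First I would record this computation as a lemma: $\phi_i(\lambda)$ sends the bi-arrangement $(L;M)$ of $D$ bijectively onto the bi-arrangement $(L';M')$ of $\tau_i(\lambda).D$, respecting the orderings and the labels. Then the genericity equivalence follows immediately from Lemma~\ref{lemrelationscycles}: genericity of the decorations is equivalent to $L\cup M$ being a normal crossing divisor, and normal crossing is preserved under the affine isomorphism $\phi_i(\lambda)$; alternatively, one sees directly from the definition of \S2.4.3 that the total decoration of every simple cycle of $\Gamma$ is unchanged by $\tau_i(\lambda)$, since each cycle visits $i$ at most once and the $+\lambda$ on the incoming edge cancels the $-\lambda$ on the outgoing edge. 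Either argument gives the first sentence of the Proposition.

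For the equality $I(D) = I(\tau_i(\lambda).D)$, I would use $\phi_i(\lambda)$ as the change of variables in the integral. We have $\phi_i(\lambda)^*\omega_{\tau_i(\lambda).D} = \omega_D$: indeed $\phi_i(\lambda)$ is a translation, so $\phi_i(\lambda)^*(dt_1\wedge\cdots\wedge dt_n) = dt_1\wedge\cdots\wedge dt_n$, and $\phi_i(\lambda)^*\varphi'_k = \varphi_k$ for each $k$ by the edge computation above (here $\varphi'_k$ is the linear form defining $L'_k$). Moreover $\phi_i(\lambda)$ maps $\C^n\setminus L$ isomorphically onto $\C^n\setminus L'$ and sends $M_j$ into $M'_j$ for each $j$, so $\phi_i(\lambda)\circ\Delta_D$ is an admissible integration simplex for $\tau_i(\lambda).D$; choosing $\Delta_{\tau_i(\lambda).D} = \phi_i(\lambda)\circ\Delta_D$ and applying the change-of-variables formula yields
$$I(\tau_i(\lambda).D) = \int_{\phi_i(\lambda)\circ\Delta_D}\omega_{\tau_i(\lambda).D} = \int_{\Delta_D}\phi_i(\lambda)^*\omega_{\tau_i(\lambda).D} = \int_{\Delta_D}\omega_D = I(D).$$
Strictly speaking $I(D)$ depends only on the homology class $[\Delta_D]$ and the equality should be read at that level, which is automatic since $\phi_i(\lambda)_*$ is an isomorphism on the relevant relative homology. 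I expect the only mildly delicate point to be the bookkeeping of the three edge-types (between non-root vertices, to the root, from the root) in the matching lemma; everything after that is a one-line application of the substitution rule for integrals, so there is no real obstacle.
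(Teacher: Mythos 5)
Your overall strategy is exactly the paper's: the genericity statement is the observation that the total decoration of every simple cycle of $\Gamma(D)$ is unchanged by $\tau_i(\lambda)$, and the equality $(R1)$ is obtained by the affine change of variables translating the $i$-th coordinate, with the integration simplices chosen compatibly. (Your direct cycle argument for genericity is the right one; note that Lemma \ref{lemrelationscycles} as stated only gives the implication from generic decorations to normal crossing, so deducing the equivalence from normal crossing alone is not literally covered by that lemma.)

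However, the second half of your writeup has the direction of the translation backwards, inconsistently with your own edge computation. Substituting $t_i\mapsto t_i+\lambda$ into the equations of the bi-arrangement of $D$ produces the equations of the bi-arrangement of $\tau_i(\lambda).D$; in other words $\phi_i(\lambda)^{*}\varphi_k=\varphi'_k$, so $\phi_i(\lambda)$ maps $(L';M')$ onto $(L;M)$, not $(L;M)$ onto $(L';M')$. Consequently your stated matching lemma, the identity $\phi_i(\lambda)^{*}\omega_{\tau_i(\lambda).D}=\omega_D$, and the admissibility of $\phi_i(\lambda)\circ\Delta_D$ for $\tau_i(\lambda).D$ are all false as written: for a chord $k$ entering the vertex $i$ one finds $\phi_i(\lambda)^{*}\varphi'_k=t_k-t_i-a_k-2\lambda\neq\varphi_k$, and $\phi_i(\lambda)$ sends $M_i=\{t_i=t_{i+1}+b_i\}$ into the hyperplane with decoration $b_i+\lambda$ rather than $b_i-\lambda$. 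The fix is immediate and brings you back to the paper's proof: choose $\Delta_{\tau_i(\lambda).D}=\phi_i(-\lambda)\circ\Delta_D$, i.e.\ the image of $\Delta_D$ under $t_i\mapsto t_i-\lambda$, and use $\phi_i(-\lambda)^{*}\omega_{\tau_i(\lambda).D}=\omega_D$, which is exactly your edge computation read in the correct direction; the change-of-variables formula then gives $I(\tau_i(\lambda).D)=I(D)$.
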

		
		\begin{proof}
		The first statement is straightforward. The equality follows from the change of variables $t_i\mapsto t_i-\lambda$ in the integral defining $I(D)$. Of course the simplices $\Delta_D$ and $\Delta_{\tau_i(\lambda).D}$ are chosen in a compatible way: $\Delta_{\tau_i(\lambda).D}$ is the image of $\Delta_D$ under $t_i\mapsto t_i-\lambda$.
		\end{proof}
	
		\paragraph{\textit{Rotating a dissection diagram}} \hspace*{\fill}\\
		
		Let $D$ be a decorated dissection diagram of degree $n$; let $D^+$ be the dissection diagram obtained from $D$ by rotating the labels of the $(n+1)$ vertices of $D$ in clockwise order. One has to flip a certain number of chords so that all the chords in $D^+$ point towards the root (which was formerly vertex $1$). The rule for flipping chords is given in \S \ref{conventions}.
		
		\begin{center}$$\vc{\dddegthreeexTen{1}}\hspace{.7cm}\leadsto\hspace{.7cm}\vc{\dddegthreeexTwelve{1}}$$
		$$\,\,\,D\hspace{3.7cm} D^+$$\end{center}
		
		\begin{prop}\label{r2}
		The decorations of $D^+$ are generic if and only if the decorations of $D$ are generic. In this case, let $\varepsilon$ be the signature of the permutation relating the orders of the chords in $D$ and in $D^+$. Then we have the equality
		$$(R2):\,I(D)=(-1)^n\varepsilon\, I(D^+).$$
		\end{prop}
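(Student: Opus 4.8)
The plan is to realize the passage from $D$ to $D^+$ by a single linear change of coordinates on $\C^n$ and to obtain $(R2)$ by tracking the three signs it produces. The genericity statement is immediate: $\Gamma(D)$ and $\Gamma(D^+)$ have the same underlying undirected graph, and by the reversal convention of \S\ref{conventions} the total decoration of every simple cycle is unchanged, so one diagram is generic exactly when the other is. For the equality I would work in the coordinates $(t_1,\dots,t_n)$ attached to $D$ (vertex $i\leftrightarrow t_i$, root $\leftrightarrow 0$). Since the new root of $D^+$ is the old vertex $1$, while clockwise the new vertex $k$ is the old vertex $k+1$ for $1\leq k\leq n-1$ and the new vertex $n$ is the old root, the coordinates adapted to $D^+$ are $s_k=t_{k+1}-t_1$ (for $1\leq k\leq n-1$) and $s_n=-t_1$. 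A short determinant computation gives $\phi^*(ds_1\wedge\cdots\wedge ds_n)=(-1)^n\,dt_1\wedge\cdots\wedge dt_n$ for the associated linear isomorphism $\phi\colon\C^n\to\C^n$.

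The key point is that $\phi$ is an isomorphism of bi-arrangements between the one attached to $D$ (\S\ref{defbiarrangementdd}) and the one attached to $D^+$, compatible with the conventions of \S\ref{conventions}. Writing $b_j^+$ for the side decorations of $D^+$ one has $b_j^+=b_{j+1}$ (indices modulo $n+1$), and a direct substitution shows $\phi$ sends $M_j^{(D)}$ onto $M_{j-1}^{(D^+)}$ for all $j$ (indices modulo $n+1$), a cyclic shift of the $M$-labels. Likewise, for each chord $i$ of $D$ the pulled-back linear form $\varphi_i\circ\phi^{-1}$ equals, up to a sign, the linear form $\psi_{\sigma(i)}$ cutting out the $\sigma(i)$-th chord of $D^+$, where $\sigma$ is the permutation relating the chord-orders of $D$ and $D^+$; the sign is $-1$ precisely for the chords that get flipped in passing to $D^+$, i.e.\ for the $F$ chords lying on the tree-path in $D$ from the old vertex $1$ to the old root (re-rooting a tree reverses exactly the edges between the two roots).

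Assembling: from $\omega_D=\frac{1}{(2i\pi)^n}\frac{dt_1\wedge\cdots\wedge dt_n}{\varphi_1\cdots\varphi_n}$ one gets $(\phi^{-1})^*\omega_D=(-1)^n(-1)^F\,\omega_{D^+}=(-1)^{n+F}\omega_{D^+}$, the $(-1)^n$ being the Jacobian and the $(-1)^F$ the product of the $F$ flip-signs in the denominator. Moreover $\phi(\Delta_D)$ is a singular $n$-simplex in $\C^n\setminus L^{(D^+)}$ whose $j$-th face lies in $M_{j-1}^{(D^+)}$; re-ordering its $n+1$ vertices by the $(n+1)$-cycle coming from that shift turns it into an admissible integration simplex for $D^+$, at the cost of the signature $(-1)^n$ of the cycle. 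Taking $\Delta_{D^+}$ to be this compatible choice (exactly as in the proof of Proposition \ref{r1}),
$$I(D)=\int_{\Delta_D}\omega_D=\int_{\phi(\Delta_D)}(\phi^{-1})^*\omega_D=(-1)^{n+F}\int_{\phi(\Delta_D)}\omega_{D^+}=(-1)^{n+F}(-1)^n\,I(D^+)=(-1)^{F}I(D^+).$$

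It remains to match $(-1)^F$ with $(-1)^n\varepsilon$, i.e.\ to show $\sgn(\sigma)=(-1)^{n+F}$. On the $n-F$ non-flipped chords (which contain every chord $i\geq 2$ off the path) $\sigma$ is the shift $i\mapsto i-1$, while on the flipped chords $1=v_0,v_1,\dots,v_{F-1}$ (with $v_{j+1}$ the endpoint of chord $v_j$ towards the old root) $\sigma(v_j)$ is the $D^+$-label of $v_{j+1}$. Comparing with the $n$-cycle $\sigma_0$ given by $i\mapsto i-1$, $1\mapsto n$, one checks that $\sigma_0^{-1}\sigma$ fixes every non-flipped chord and cyclically permutes $v_0,\dots,v_{F-1}$, whence $\varepsilon=\sgn(\sigma)=(-1)^{n-1}(-1)^{F-1}=(-1)^{n+F}$ and $(-1)^F=(-1)^n\varepsilon$, which gives $(R2)$. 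I expect the only genuine difficulty to be the sign bookkeeping — verifying $\varphi_i\circ\phi^{-1}=\pm\psi_{\sigma(i)}$ in all cases (including those where an endpoint is the root) and pinning down the decomposition of $\sigma$ into the shift and the $F$-cycle; the change of variables itself is routine.
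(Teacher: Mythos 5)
Your proposal is correct and follows essentially the same route as the paper: the same linear change of variables $s_k=t_{k+1}-t_1$, $s_n=-t_1$, and the same $(-1)^n$ from the cyclic permutation of the $(n+1)$ faces of the integration simplex. The only difference is bookkeeping: the paper obtains the factor $\varepsilon$ at once by noting that each factor $\mathrm{dlog}(\varphi_i)$ is insensitive to negating the defining equation, so only the reordering of the wedge factors matters, whereas you track the Jacobian $(-1)^n$ and the flip sign $(-1)^F$ separately and then verify the (correct) identity $\varepsilon=(-1)^{n+F}$ — a longer but valid way to land on the same sign.
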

		
		\begin{proof}
		The first statement is straightforward since $\Gamma(D)=\Gamma(D^+)$ as decorated directed graphs.\\
		Let us perform the change of variables $f(t_1,\ldots,t_n)=(t_2-t_1,t_3-t_1,\ldots,t_{n-1}-t_1,-t_1)$ in the integral defining $I(D)$:
		$$I(D)=\int_{\Delta_D}\omega_D=\int_{f^{-1}(\Delta_D)}f^*\omega_D.$$
		Now $\Delta_{D^+}$ is chosen to be $f^{-1}(\Delta_D)$, but with the orientation multiplied by $(-1)^n$: indeed, we perform a cyclic permutation of the $(n+1)$ faces of the simplex. As the differential forms are concerned, we get by definition $f^*\omega_D=\varepsilon\,\omega_{D^+}$, hence the result.
		\end{proof}
		
		\paragraph{\textit{Stokes' theorem}}\hspace*{\fill}\\
		
		Let us consider a set of $n$ non-intersecting chords in $\Pi_{n+1}$ such that the graph created by the chords is acyclic. For such a diagram $\wt{D}$ and a side $s\in\{0,\ldots,n+1\}$ of $\Pi_{n+1}$, we let $\partial_s\wt{D}$ be the graph obtained by contracting the side $s$. One easily checks that there exist exactly two sides $i$ and $j$ of $\wt{D}$ such that $\partial_i\wt{D}$ and $\partial_j\wt{D}$ are dissection diagrams.
		\begin{center}\relationstokes{1}\end{center}
		Now let us suppose that the chords of $\wt{D}$ are directed and that we are given a decoration on the total directed graph of $\wt{D}$. Then $\wt{D}$ gives a bi-arrangement $(L;M)=(L_1,\ldots,L_n;M_0,M_1,\ldots,M_{n+1})$ in the same fashion as in \S \ref{defdissectionpolylogs}. For a side $s\in\{0,\ldots,n+1\}$, the bi-arrangement given by $\partial_s\wt{D}$ is exactly the contraction $(M_s|L;M_0,\ldots,\widehat{M_s},\ldots,M_{n+1})$, with natural coordinates $(t_1,\ldots,\widehat{t_s},\ldots,t_{n+1})$ on $M_s$ (the rule for contracting edges is given in \S \ref{conventions}).\\
		To sum up, $\partial_i\wt{D}$ and $\partial_j\wt{D}$ are decorated dissection diagrams. One way of coherently choosing the singular simplices $\Delta_{\partial_i\wt{D}}$ and $\Delta_{\partial_j\wt{D}}$ is to choose a singular simplex $\wt{\Delta}$ such that $\partial_s\wt{\Delta}\subset M_s$ for all $s$, and to put $\Delta_{\partial_i\wt{D}}=\partial_i\wt{\Delta}$ and $\Delta_{\partial_j\wt{D}}=\partial_j\wt{\Delta}$.
		
		\begin{prop}\label{r3}
		If the decorations on $\wt{D}$ are generic then the decorations on $\partial_i\wt{D}$ and $\partial_j\wt{D}$ are generic too. In this case, let $\varepsilon_i$ (resp. $\varepsilon_j$) be the signature of the permutation relating the orders of the chords in $\wt{D}$ and in $\partial_i\wt{D}$ (resp. $\partial_j\wt{D}$). Then we have the equality 
		$$(R3):\,(-1)^i\varepsilon_i\,I(\partial_i\wt{D})+(-1)^j\varepsilon_j\,I(\partial_j\wt{D})=0.$$
		\end{prop}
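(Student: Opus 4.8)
The plan is to obtain $(R3)$ from Stokes' theorem applied to the closed meromorphic $n$-form $\omega_{\wt{D}}$ on $\C^{n+1}\setminus L$ and the singular $(n+1)$-chain $\wt{\Delta}$, exactly in the spirit of the classical shuffle-type relations for iterated integrals, the point being that all but two of the boundary faces contribute $0$ for combinatorial reasons. First I would dispose of the genericity claim: the total directed graph $\Gamma(\partial_s\wt{D})$ is obtained from $\Gamma(\wt{D})$ by contracting the edge labelled by the side $s$ according to the rule of \S\ref{conventions}, and that rule is designed precisely so that the total decoration of every simple cycle is preserved under contraction; since contraction preserves genericity (as recalled at the end of \S\ref{conventions}), the genericity of $\wt{D}$ descends to $\partial_i\wt{D}$ and $\partial_j\wt{D}$, so that $I(\partial_i\wt{D})$ and $I(\partial_j\wt{D})$ are well-defined once $\Delta_{\partial_i\wt{D}}=\partial_i\wt{\Delta}$ and $\Delta_{\partial_j\wt{D}}=\partial_j\wt{\Delta}$ have been fixed as in the statement.

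Next comes the main computation. The form $\omega_{\wt{D}}$ of \S\ref{defdissectionpolylogs} is a wedge of $\mathrm{dlog}$'s, hence a closed $n$-form on $\C^{n+1}\setminus L$, and $\wt{\Delta}$ may be taken inside $\C^{n+1}\setminus L$ because $L\cup M$ is a normal crossing divisor (Lemma \ref{lemrelationscycles}). Stokes' theorem then gives
\[ 0=\int_{\wt{\Delta}}d\omega_{\wt{D}}=\int_{\partial\wt{\Delta}}\omega_{\wt{D}}=\sum_{s=0}^{n+1}(-1)^s\int_{\partial_s\wt{\Delta}}\omega_{\wt{D}}|_{M_s}, \]
using $\partial_s\wt{\Delta}\subset M_s$. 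The crucial point is that $\omega_{\wt{D}}|_{M_s}=0$ whenever $\partial_s\wt{D}$ is \emph{not} a dissection diagram, i.e.\ for all $s$ other than the two distinguished sides $i$ and $j$: in that case the chords of $\wt{D}$ together with the edge $s$ contain an undirected cycle, supported on a subset $C\subset\{1,\ldots,n\}$ of chords, and the cycle relation among the corresponding defining linear forms, once restricted to $M_s$ (on which the linear equation of $M_s$ is identically zero), becomes a genuine linear dependence among the $1$-forms $d\varphi_k|_{M_s}$, $k\in C$. Since $|C|\le n$, the $n$-form $d\varphi_1|_{M_s}\wedge\cdots\wedge d\varphi_n|_{M_s}$ vanishes, hence so does $\omega_{\wt{D}}|_{M_s}$.

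For $s\in\{i,j\}$, by contrast, the bi-arrangement attached to $\partial_s\wt{D}$ is the contraction $(M_s|L;M_0,\ldots,\widehat{M_s},\ldots,M_{n+1})$ in the natural coordinates on $M_s$, so up to the reordering of chords recorded by $\varepsilon_s$ (and up to signs on individual equations, which are invisible inside $\mathrm{dlog}$) its defining form satisfies $\omega_{\partial_s\wt{D}}=\varepsilon_s\,\omega_{\wt{D}}|_{M_s}$. Substituting this into the displayed identity, only the two terms $s=i$ and $s=j$ survive and yield
\[ 0=(-1)^i\varepsilon_i\,I(\partial_i\wt{D})+(-1)^j\varepsilon_j\,I(\partial_j\wt{D}), \]
which is exactly $(R3)$.

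I expect the one genuinely delicate step to be the vanishing $\omega_{\wt{D}}|_{M_s}=0$ for $s\notin\{i,j\}$: one must verify carefully that a cycle through the side $s$ produces an honest \emph{linear} (not merely affine) relation among the restricted chord equations, which is precisely what passing to $M_s$ achieves. Everything else — identifying $\omega_{\wt{D}}|_{M_s}$ with $\varepsilon_s\,\omega_{\partial_s\wt{D}}$ through the contraction of bi-arrangements, and matching the simplicial boundary signs $(-1)^s$ with the reindexing signs $\varepsilon_s$ and the sign conventions of \S\ref{conventions} — is routine sign bookkeeping, but it should be done with enough care to land on the exact signs appearing in the statement.
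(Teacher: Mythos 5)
Your argument is correct and follows essentially the same route as the paper's proof: Stokes' theorem applied to the closed form $\omega_{\wt{D}}$ on the chain $\wt{\Delta}$, vanishing of the restriction $\omega_{\wt{D}}|_{M_s}$ for $s\notin\{i,j\}$, and identification of the two surviving faces with $\varepsilon_i\,\omega_{\partial_i\wt{D}}$ and $\varepsilon_j\,\omega_{\partial_j\wt{D}}$ via the contraction of bi-arrangements. Your cycle-to-linear-dependence justification of the vanishing step is exactly the reason the paper leaves implicit, so nothing is missing.
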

		
		\begin{proof}
		The first statement is straightforward. Let $\omega$ be the differential $n$-form on $\C^{n+1}$ given by the $n$ decorated chords of $\wt{D}$ as in \ref{defdissectionpolylogs}. It is a closed form so by Stokes' theorem we get
		$$\sum_{s=0}^{n+1}(-1)^s\int_{\partial_s\wt{\Delta}}\omega_{|\partial_s\wt{\Delta}}=0.$$
		For $s\notin\{i,j\}$ we get $\omega_{|\partial_s\wt{\Delta}}=0$; the result then follows from the equalities $\omega_{|\partial_i\wt{\Delta}}=\varepsilon_i\,\omega_{\partial_i\wt{D}}$ and $\omega_{|\partial_j\wt{\Delta}}=\varepsilon_j\,\omega_{\partial_j\wt{D}}$.
 		\end{proof}
		
		\paragraph{\textit{Orlik-Solomon relations}}\hspace*{\fill}\\
		
		Let us consider a set of $(n+1)$ non-intersecting chords in $\Pi_n$ such that the graph created by the chords has Betti number $1$. For such a diagram $\widehat{D}$, let $C$ be the unique simple cycle. For every chord $c\in C$, we get a dissection diagram $\widehat{D}\setminus \{c\}$ by deleting $c$.
		\begin{center} \relationorliksolomon{1} \end{center}
		Now let us suppose that the chords of $\widehat{D}$ are linearly ordered by $\{1,\ldots,n+1\}$ and directed and that we are given a decoration on the total graph of $\widehat{D}$. Then for every chord $c\in C$ we get a decorated dissection diagram $\widehat{D}\setminus \{c\}$. It has to be noted that one may have to reorder the chords in $\widehat{D}\setminus \{c\}$. One may compute all the dissection polylogarithms $I(\widehat{D}\setminus \{c\})$ using the same choice of integration simplex.
		
		\begin{prop}\label{r4}
		Let us suppose that among all simple cycles in the total graph of $\widehat{D}$, $C$ is the only one whose total decoration is $0$. Then for every chord $c\in C$, the decorations on $\widehat{D}\setminus \{c\}$ are generic. For $c\in C$, let us denote by $\varepsilon(c)$ the product of the signs $\sgn(\{c\},C\setminus\{c\})$, $\sgn(C\setminus\{c\},\{1,\ldots,n+1\}\setminus C)$, and the signature of the permutation reordering the chords in $\widehat{D}\setminus\{c\}$. We then have the equality:
		$$(R4):\,\sum_{c\in C}\varepsilon(c)\,I(\widehat{D}\setminus \{c\})=0.$$
		\end{prop}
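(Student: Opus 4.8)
The plan is to obtain the relation $(R4)$ by integrating, over a single well-chosen simplex, the classical Orlik--Solomon relation among logarithmic forms; the whole point is that the hypothesis on $C$ turns this relation into an honest identity of meromorphic forms on $\C^n$, not merely a relation in cohomology.

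First I would dispose of the genericity claim. Since $C$ consists entirely of chords, any simple cycle of the total graph of $\widehat{D}\setminus\{c\}$ is a simple cycle of the total graph of $\widehat{D}$ not containing $c$, hence different from $C$; by hypothesis it therefore has nonzero total decoration, so $\widehat{D}\setminus\{c\}$ is generic. (It is a genuine dissection diagram: deleting one edge from the unique cycle of the chord graph leaves a spanning tree of $\Pi_n$.) The key structural observation is that, if $\epsilon_c\in\{\pm1\}$ are the signs attached to the chords $c\in C$ by a fixed orientation of the cycle $C$, then $\sum_{c\in C}\epsilon_c\,\varphi_c=0$ identically on $\C^n$: the coordinate terms $t_i-t_j$ telescope around the cycle and the constants add up to the total decoration of $C$, which vanishes. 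In particular $L_C=\bigcap_{c\in C}L_c$ is a nonempty affine subspace of codimension $|C|-1$, so $L\cup M$ is \emph{not} a normal crossing divisor; nonetheless each bi-arrangement $(L(\overline{\{c\}});M)$, $c\in C$, is affinely generic by Lemma \ref{lemrelationscycles} (its total graph has no zero-decoration cycle), and a general-position argument --- pushing the boundary faces off $L_c$ for the finitely many $c\in C$, which is possible since no $M_j$ is contained in any $L_c$ --- produces a single singular $n$-simplex $\Delta$ with $\partial_j\Delta\subset M_j$ for all $j$ and $\Delta\cap L=\varnothing$. This $\Delta$ serves to compute every $I(\widehat{D}\setminus\{c\})$.

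Next I would prove the identity of meromorphic $n$-forms $\sum_{c\in C}\varepsilon(c)\,\omega_{\widehat{D}\setminus\{c\}}=0$ on $\C^n$. Writing $C=\{c_1<\cdots<c_p\}$, the exact dependence $\sum_{c\in C}\epsilon_c\varphi_c=0$ (all $\epsilon_c\neq0$) makes $C$ a circuit and yields the Orlik--Solomon relation $\sum_{r=1}^{p}(-1)^{r-1}\bigwedge_{s\neq r}\mathrm{dlog}(\varphi_{c_s})=0$, which in the case of an exact (zero-constant) dependence is valid already at the level of meromorphic forms --- one checks this by solving the relation for one $\varphi_c$, the cases $p=2,3$ being instructive. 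Here $(-1)^{r-1}=\sgn(\{c_r\},C\setminus\{c_r\})$. Wedging this relation on the right by $\bigwedge_{c''\in\overline{C}}\mathrm{dlog}(\varphi_{c''})$ in increasing order, where $\overline{C}=\{1,\ldots,n+1\}\setminus C$, and recalling that flipping a chord leaves its defining form unchanged, each term becomes $(2i\pi)^n\,\omega_{\widehat{D}\setminus\{c\}}$ up to the sign $\sgn(C\setminus\{c\},\overline{C})$ and the signature of the permutation that reorders the chords of $\widehat{D}\setminus\{c\}$ into the conventional order; multiplying these three signs recovers exactly $\varepsilon(c)$. Integrating the resulting identity over $\Delta$ then gives $(R4)$.

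I expect the main difficulty to be the sign bookkeeping: one must match the three factors defining $\varepsilon(c)$ --- the Orlik--Solomon boundary sign, the shuffle sign pulling the chords of $C$ past those of $\overline{C}$, and the reordering forced by the labelling conventions on dissection diagrams --- with the standard normalisation of the Orlik--Solomon relation, while keeping the orientation of $\Delta$ coherent for all $c\in C$. The construction of the common simplex is the other delicate point, precisely because $L\cup M$ genuinely fails to be normal crossing here.
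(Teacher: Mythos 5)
Your proposal is correct and follows essentially the same route as the paper: the linear dependence $\sum_{c\in C}\pm\varphi_c=0$ forced by the zero total decoration of $C$ yields the Orlik--Solomon relation as an identity of meromorphic forms, which you wedge with the dlog forms of the remaining chords, match the three signs constituting $\varepsilon(c)$, and integrate over a common simplex. The only difference is that you spell out the existence of that common integration simplex and the form-level validity of the Orlik--Solomon relation, points the paper handles by assumption in the setup preceding the proposition and by citing Orlik--Terao, respectively.
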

		
		\begin{proof}
		The first statement is straightforward. Since the total decoration of $C$ is $0$, one easily sees that the hyperplanes $L_c$, for $c\in C$, are linearly dependent. Thus, we have the Orlik-Solomon relation \cite[Lemma 3.119]{orlikterao}
		$$\sum_{c\in C}\sgn(\{c\},C\setminus\{c\})\,\omega_{C\setminus \{c\}}=0.$$
		Multiplying on the right by $\omega_{\{1,\ldots,n+1\}\setminus C}$ we get
		$$\sum_{c\in C}\sgn(\{c\},C\setminus\{c\})\sgn(C\setminus\{c\},\{1,\ldots,n+1\}\setminus C)\,\omega_{\{1,\ldots,n+1\}\setminus \{c\}}=0.$$
		The result then follows from the fact that $\omega_{\{1,\ldots,n+1\}\setminus \{c\}}$ is $\omega_{\widehat{D}\setminus \{c\}}$ up to the sign implied by the reordering of the chords in $\widehat{D}\setminus\{c\}$.
		\end{proof}
		
		\begin{rem}
		All the above relations are special cases of the \enquote{scissors congruence relations} between Aomoto polylogarithms \cite[2.1]{bvgs}. The translation relation $(R1)$ and the rotation relation $(R2)$ are special cases of projective invariance under particular subgroups of $\mathrm{PGL}_{n+1}(\C)$. Stokes' theorem $(R3)$ is a particular case of the intersection additivity relation with respect to $M$, which has been shown \cite[Proposition 2.4]{zhaogeneric} to follow from the scissors congruence relations. The Orlik-Solomon relation $(R4)$ is a particular case of the additivity relation with respect to $L$.
		\end{rem}
		
	\subsection{Reduction to iterated integrals}		
		
		\begin{thm}\label{reductionitint}
		Let $D$ be a generic decorated dissection diagram. Then the dissection polylogarithm $I(D)$ can be written as a linear combination with integer coefficients of generic iterated integrals $\mathbb{I}(a_0;a_1,\ldots,a_n;a_{n+1})$ where the $a_i$'s are linear combinations with integer coefficients of the decorations of $D$.
		\end{thm}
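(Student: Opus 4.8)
The plan is to show, by induction, that every dissection polylogarithm $I(D)$ is an integer linear combination of corolla polylogarithms of the same degree, since a corolla polylogarithm is --- after the change of variables of \S\ref{exdissectionpolylogs} moving all side decorations onto the root side and the last side --- exactly a generic iterated integral $\mathbb{I}(a_0;a_1,\ldots,a_n;a_{n+1})$. The induction runs on the degree $n$ and, within a fixed degree, on a complexity $\Phi(D)$ measuring the distance from $D$ to a corolla, for instance $\Phi(D)=\sum_{i=1}^{n}\mathrm{depth}(i)$ where $\mathrm{depth}(i)$ is the length of the path from the vertex $i$ to the root in the tree of chords. Corollas are precisely the diagrams with $\Phi(D)=n$, which is the minimum; the case $n\le 1$ is immediate, a degree-$1$ diagram being already a logarithm $\mathbb{I}(a_0;a_1;a_2)$.

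For the inductive step, first note that since the $n$ chords of $D$ form a spanning tree rooted at the root, every vertex $i$ has a parent $p(i)$ and $\varphi_i=t_i-t_{p(i)}-a_i$ (with $t_{\mathrm{root}}=0$), so the integral linear change of variables $s_i=t_i-t_{p(i)}$, of determinant $\pm 1$, turns $\omega_D$ into a product form $\frac{1}{(2i\pi)^n}\prod_i\frac{ds_i}{s_i-a_i}$ and, by Lemma \ref{lemrelationscycles}, leaves the ambient arrangement affinely generic --- but the real engine is the relations of \S\ref{sectionrelations}. Given $D$ not a corolla, choose a chord $c=(v\to w)$ with $w$ not the root, let $w'=p(w)$, and form the degree-$n$ diagram $\widehat{D}$ with one extra chord by adjoining the shortcut chord $c'''=(v\to w')$ decorated so that the simple cycle through $c$, the chord $w\to w'$, and $c'''$ has total decoration $0$; after, if necessary, perturbing the other decorations and using the translation relation $(R1)$ to normalise, one can arrange that this is the only simple cycle of $\widehat{D}$ with total decoration $0$. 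The Orlik--Solomon relation $(R4)$ then expresses $I(D)=I(\widehat{D}\setminus\{c'''\})$ as a signed integer combination of the $I(\widehat{D}\setminus\{c''\})$ for the remaining chords $c''$ of the cycle; these are dissection diagrams of degree $n$ in which the chord $c$ has been re-routed one step closer to the root, and one checks that they have strictly smaller complexity $\Phi$, so the induction hypothesis applies. (The rotation relation $(R2)$ lets one freely choose which vertex is the root, and Stokes' relation $(R3)$ handles the degenerate boundary configurations.)

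The main obstacle is the combinatorial heart of this step: one must choose the auxiliary cycle so that the shortcut chord $c'''$ does not cross any chord of $D$, so that \emph{all} of the diagrams $\widehat{D}\setminus\{c''\}$ produced by $(R4)$ have strictly smaller value of a \emph{single} fixed complexity (the naive depth sum may need to be replaced by a finer statistic, or several relations iterated), and so that the genericity hypothesis of $(R4)$ is met --- the last point being handled by perturbing the decorations, applying $(R4)$ in the generic range, and passing to the limit, using that $I(D)$ varies continuously with the (generic) decorations, with some extra care required should a limiting diagram fail to be generic. The signs $\varepsilon(c)$, $\varepsilon_i$ occurring in $(R1)$--$(R4)$ must be propagated but are harmless for the statement. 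Finally, integrality of the coefficients, and the fact that the resulting $a_0,\ldots,a_{n+1}$ are \emph{integer} linear combinations of the $a_i$ and $b_j$, is automatic: every change of variables and every hyperplane equation is integral in the decorations, the only analytic input is $\int\frac{ds}{s-a}=\log$ (used solely for corollas, via \S\ref{exdissectionpolylogs}), and all the relations $(R1)$--$(R4)$ carry coefficients $\pm 1$.
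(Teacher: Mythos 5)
Your overall strategy (reduce to corollas by repeated use of the relations of \S\ref{sectionrelations}, then apply $(R1)$) is the paper's strategy, but the inductive step you propose is exactly where the work lies, and as written it does not close; the obstacles you flag yourself are genuine gaps, not technicalities. First, the shortcut chord $c'''$ from $v$ to $w'=p(w)$ can cross chords of $D$: any chord of $D$ from $w$ to a vertex on the arc between $w'$ and $v$ not containing $w$ crosses it (e.g.\ take chords $1\to 3$, $2\to 3$, $3\to\mathrm{root}$, $4\to 3$, $5\to 4$ and $v=2$, $w=3$, $w'=\mathrm{root}$), and Proposition \ref{r4} is only stated for non-crossing chords, so $\widehat{D}$ need not be admissible; you would have to prove that a suitable chord $c$ can always be chosen, which you do not. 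Second, your measure $\Phi(D)=\sum_i\mathrm{depth}(i)$ does not strictly decrease on all terms produced by $(R4)$: deleting the middle chord $w\to w'$ of your triangle makes $w$ a child of $v$, so every vertex of the subtree of $w$ outside the subtree of $v$ (and $w$ itself) gets strictly deeper, and $\Phi$ can increase (e.g.\ when $v$ is a leaf and $w$ has other children); so the induction is not grounded on a single decreasing statistic, as you concede. Third, the genericity hypothesis of $(R4)$ is dispatched by a perturb-and-pass-to-the-limit argument; this is an additional analytic step that is not justified: $I(D)$ depends on a choice of integration simplex, each term must converge to a \emph{generic} iterated integral whose arguments are \emph{integer} combinations of the original decorations, and the limiting diagrams may themselves fail to be generic.

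The paper's proof is organized precisely so that none of these issues arises. It first uses $(R2)$ to ensure there is a chord from vertex $1$ to the root, and then proves by induction on $k$ that one may assume vertices $1,\ldots,k$ are all joined to the root, the induction measure being the number of initial vertices attached to the root, which visibly increases at each step. If no chord arrives at $k$, the chord added before applying $(R4)$ goes from $k+1$ straight to the root and is decorated by the total decoration of the existing chord path from $k+1$ to the root (an integer combination of decorations of $D$, no perturbation); planarity of $D$ together with the inductive hypothesis guarantees this added chord crosses nothing, and every diagram $\widehat{D}\setminus\{c_i\}$ produced has vertices $1,\ldots,k+1$ attached to the root. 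If chords do arrive at $k$, a single application of Stokes' relation $(R3)$, opening the angle at $k$ with a new side decorated $0$, removes them. To salvage your local re-routing move you would at least need to (i) specify a choice of $c$ for which $c'''$ is provably non-crossing, (ii) replace $\Phi$ by a statistic that strictly decreases on \emph{both} nontrivial terms of the triangle relation, and (iii) verify the hypothesis of Proposition \ref{r4} for your $\widehat{D}$ combinatorially rather than by a limiting argument.
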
	
		
		\begin{proof}
		It is enough to prove that using relations $(R2)$, $(R3)$, $(R4)$, one can write $I(D)$ as a linear combination with integer coefficients of dissection polylogarithms $I(X)$ for $X$ a corolla with generic decorations as in the statement of the theorem. Indeed, using relation $(R1)$, one can always perform a change of variables so that any $I(X)$ is an iterated integral.\\
		Because the chords of $D$ do not cross, at least one chord has to connect consecutive vertices of $\Pi_n$. Thus, using relation $(R2)$, one can assume that in $D$ there is a chord between $1$ and the root.\\
		We prove by induction on $k=1,\ldots,n$ that using relations $(R3)$ and $(R4)$, one may write $I(D)$ as a linear combination with integer coefficients of generic dissection polylogarithms involving dissection diagrams where the first $k$ non-root vertices are linked to the root, with decorations as in the statement of the theorem. The case $k=1$ has already been settled, and the case $k=n$ gives the theorem.\\
		Let us suppose that in $D$ all vertices between $1$ and $k$ are linked to the root by a chord. There are two cases to consider.\\
		\textit{Case $1$}: there is no chord going to the vertex $k$. Let us then consider the chord from the vertex $v_0=k+1$. If its endpoint is the root then we are done. Else, its endpoint must be a vertex $v_1\in \{k+2,\ldots,n\}$. Let us consider the sequence of chords
		$$\stackrel{v_0}{\bullet} \stackrel{c_0}{\longrightarrow} \stackrel{v_1}{\bullet} \stackrel{c_1}{\longrightarrow} 
		\stackrel{v_2}{\bullet} \stackrel{c_2}{\longrightarrow} \cdots \stackrel{v_r}{\bullet} \stackrel{c_{r}}{\longrightarrow}\circ$$
		going to the root, where $c_i$ has decoration $\alpha_i$. Let $\widehat{D}$ be the diagram obtained by adding to $D$ a chord $c$ from $k+1$ to the root decorated by the sum $\alpha_0+\alpha_1+\alpha_2+\cdots+\alpha_r$. Then we have created a simple cycle $C=(c,c_0,c_1,\ldots,c_r)$ and we are in the situation where we can apply relation $(R4)$. Since $\widehat{D}\setminus\{c\}=D$ by definition we get		
		$$I(D)=\sum_{i=0}^r \pm I(\widehat{D}\setminus\{c_i\})$$
		 and for every $i=0,\ldots,r$,  $\widehat{D}\setminus\{c_i\}$ is a dissection diagram in which all vertices between $1$ and $k+1$ are linked to the root. Moreover its decorations are linear combinations with integer coefficients of the decorations of $D$.
		 \begin{center}\reditintcaseone{1}\hspace{1cm}\reditintcaseonebis{1}\hspace{1cm}\reditintcaseoneter{1}\end{center}
		\textit{Case $2$}: there are chords going to the vertex $k$. We are going to use relation $(R3)$ to reduce to Case $1$. Let $\wt{D}$ be the diagram obtained by opening the angle between the chord going from $k$ to the root and the first of the chords going to $k$, as in the picture below. The decoration of the new edge is $0$. Then by definition we get $\partial_k\wt{D}=D$. The other $\partial_l\wt{D}$ that is a dissection diagram has no chord arriving at $k$. Thus, relation $(R3)$ gives
		$$I(D)=\pm I(\partial_l\wt{D}).$$ The decorations of $\partial_l\wt{D}$ are linear combinations with integer coefficients of the decorations of $D$, and we are reduced to Case $1$.
		\begin{center}\reditintcasetwo{1}\hspace{1cm}\reditintcasetwobis{1}\hspace{1cm}\reditintcasetwoter{1}\end{center}
		\end{proof}
		
		\begin{rem}\label{remreditint}
		The algorithm defined in the above proof is not canonical in any sense. It is worth noting that the number of iterated integrals that appear in the final sum is between $1$ (for $D$ a corolla) and $(n-1)!$ (for $D$ a path graph).\\
		For these reasons, Theorem \ref{reductionitint} should be taken as a technical tool, and not as an abstract statement on the internal structure of dissection polylogarithms.
		\end{rem}

\section{Motivic dissection polylogarithms and their coproduct}

	As in the previous section, the decorations on dissection diagrams are implicitly taken in $\C$.

	\subsection{Mixed Hodge-Tate structures and the motivic Hopf algebra $\H$}
	
		We review here the tannakian formalism for the category of mixed Hodge-Tate structures. The reader may want to refer to the \cite[\S 8]{goncharovgaloissym} for more details on framed objects in a mixed Tate category, and to \cite[\S 2]{brownsinglevalued} for a clear exposition of the different frameworks where motivic periods appear.
		
			\subsubsection{The tannakian category of mixed Hodge-Tate structures}
	
			A \textit{mixed Hodge-Tate structure} is a mixed Hodge structure $H$ \cite{delignehodge3} such that for any $k$, $\gr_{2k+1}^WH=0$ and $\gr_{2k}^WH$ is a sum of copies of the Tate structure $\Q(-k)$.\\
			If $H$ is a mixed Hodge-Tate structure and $F$ denotes the Hodge filtration on $H_\C=H\otimes\C$, then for all $k$ we have a natural isomorphism
			$$W_{2k}H_\C\cap F^kH_\C\stackrel{\simeq}{\rightarrow}\gr_{2k}^WH_\C$$
			which gives a canonical splitting of the weight filtration over $\C$:
			\begin{equation}\label{canonicalsplitting}
			s_H:\bigoplus_k\gr_{2k}^WH_\C\stackrel{\simeq}{\rightarrow}H_\C.
			\end{equation}		
			
			 The category of mixed Hodge-Tate structures is denoted $\mathrm{MHTS}$. It is a tannakian category which has a canonical fiber functor
			$$\omega(H)=\bigoplus_{k}\gr_{2k}^WH$$
			with values in the category of finite-dimensional vector spaces.\\
			It follows that we have an equivalence of categories
			\begin{equation}\label{MHTSrepG}
			\mathrm{MHTS}\cong \mathrm{Rep}(G)
			\end{equation}
			between $\mathrm{MHTS}$ and the category of finite-dimensional representations of a group scheme $G$. One easily sees \cite[3.1]{goncharovmultiplepolylogs} that we have a semi-direct product decomposition
			\begin{equation}\label{semidirect}
			G=\mathbb{G}_m\ltimes U
			\end{equation}
			where $\mathbb{G}_m$ is the multiplicative group and $U$ is a pro-unipotent group scheme.
			
			\subsubsection{The fundamental Hopf algebra $\H$}
		
			Let $\H$ be the Hopf algebra of functions on $U$. Since $\mathbb{G}_m$ acts on $U$, $\H$ is graded. It follows from (\ref{MHTSrepG}) and (\ref{semidirect}) that $\mathrm{MHTS}$ is equivalent to the category of finite-dimensional graded comodules on $\H$:
			$$\mathrm{MHTS}\cong \mathrm{grComod}(\H).$$

			The fact that the extension groups $\mathrm{Ext}_{\mathrm{MHTS}}^1(\Q(0),\Q(-n))$ are $0$ for $n\geq 0$ implies that $\H$ is positively graded and connected:
			$$\H=\bigoplus_{n\geq 0}\H_n \,\,\,\, , \,\,\,\, \H_0=\Q.$$
		
			An element of $\H_n$ is an equivalence class of triples $(H,v,\varphi)$ where 
			\begin{itemize}
			\item $H$ is a mixed Hodge-Tate structure,
			\item $v\in \gr^W_{2n}H$,
			\item $\varphi\in \left(\gr^W_0H\right)^\vee$.
			\end{itemize}
		
			The equivalence relation is generated by: $(H,v,\varphi)\equiv (H',v',\varphi')$ if there exists a morphism of mixed Hodge-Tate structures $f:H\rightarrow H'$ such that $\gr^W_{2n}f(v)=v'$ and $\varphi'\circ\gr^W_0f=\varphi$.\\
			A triple $(H,v,\varphi)$ as above is called an \textit{$n$-framed mixed Hodge-Tate structure}, $v$ and $\varphi$ being called the \textit{framings}. The expression $(H,v,\varphi)$ is linear in $v$ and $\varphi$.\\
		
			 The product in $\H$ is defined via the tensor product: 
		 $$(H,v,\varphi)(H',v',\varphi')=(H\otimes H',v\otimes v',\varphi\otimes\varphi').$$
		 
			 The coproduct $\Delta_{n-k,k}:\H_n\rightarrow\H_{n-k}\otimes \H_k$ is abstractly defined by the formula
			 \begin{equation}\label{defabstractcoproduct}
			 \Delta_{n-k,k}(H,v,\varphi)=\sum_i (H(k),v,b_i^\vee)\otimes (H,b_i,\varphi)
			 \end{equation} 
			 where $(b_i)$ is any basis of $\gr_{2k}^WH$ and $(b_i^\vee)$ the dual basis.\\
		 
			\subsubsection{A variant: the algebra $\P$ of Hodge-Tate periods}
		
			Parallel to the above construction of $\H$ is the construction of the algebra $\P$ of Hodge-Tate periods. It is a graded algebra $$\P=\bigoplus_{n\geq 0} \P_n.$$
			An element of $\P_n$ is an equivalence class of triples $(H,v,\delta)$ where
			\begin{itemize}
			\item $H$ is a mixed Hodge-Tate structure with non-negative weights: $W_{-1}H=0$,
			\item $v\in \gr_{2n}^WH$,
			\item $\delta\in H^\vee$.
			\end{itemize}
			The product is defined in the same way as in $\H$.\\
			We may define maps $\rho_{n-k,k}:\P_n\rightarrow \H_{n-k}\otimes\P_k$ by the formula
			$$\rho_{n-k,k}(H,v,\delta)=\sum_i (H(k),v,b_i^\vee)\otimes (H,b_i,\delta)$$
			where $(b_i)$ is any basis of $\gr_{2k}^WH$ and $(b_i^\vee)$ the dual basis. They endow $\P$ with the structure of a graded comodule over $\H$. This coaction is dual to an action of the group scheme $G$ on the algebra $\P$ of Hodge-Tate periods.\\
		
			 There is a surjective morphism of graded algebras
			\begin{equation}\label{surjectionPH}
			\P\twoheadrightarrow \H
			\end{equation}
			which sends $(H,v,\delta)$ to $(H,v,\varphi)$ where $\varphi$ is the image of $\delta$ via the map $H^\vee\twoheadrightarrow\left(\gr_0^WH\right)^\vee$. This is well-defined since by assumption $W_{-1}H=0$. The surjection (\ref{surjectionPH}) is compatible with the structures of graded comodules over $\H$.\\
			
			There is a morphism of algebras 
			$$per:\P\rightarrow\C$$
			called the \textit{period map} which is defined by
			$$per(H,v,\delta)=\langle\delta, s_H(v)\rangle$$
			where $s_H$ is the canonical splitting (\ref{canonicalsplitting}).
			
			\begin{rem}
			We compare our framework with the notation of \cite[\S 2]{brownsinglevalued}. We set $\mathcal{M}=\mathrm{MHTS}$, $\omega_{dR}=\omega$, and $\omega_B$ is the natural forgetful functor from $\mathrm{MHTS}$ to the category of finite-dimensional vector spaces (that forgets the weight and Hodge filtrations). Then in \cite{brownsinglevalued} the Hopf algebra $\H$ is denoted by $\mathcal{P}^{\mathfrak{a}}$, the algebra $\P$ of Hodge-Tate periods is denoted by $\P^{\mathfrak{m},+}$, and the surjection (\ref{surjectionPH}) is denoted by $\pi_{\mathfrak{a},\mathfrak{m}+}$.
			\end{rem}
		
	\subsection{Motivic dissection polylogarithms}\label{sectionmotivicperiods}
	
		Let $D$ be a generic decorated dissection diagram. Let $(L;M)=(L_1,\ldots,L_n;M_0,M_1,\ldots,M_n)$ be the bi-arrangement in $\C^n$ corresponding to $D$ (see \ref{defbiarrangementdd}). With the notation of \ref{paragraphrelativeBOS}, we set
		$$H(D)=H(L;M).$$ 
		According to Theorem \ref{relativeBOS}, it is a mixed Hodge-Tate structure with non-negative weights between $0$ and $2n$ and we have
		\begin{itemize}
		\item $\gr^W_{2n}H(D)\cong \Lambda^n(e_1,\ldots,e_n)$ which is one-dimensional with basis $$v(D)=e_1\wedge\cdots\wedge e_n.$$
		\item $\gr^W_0H(D)$ is isomorphic to the quotient of $\Lambda^n(f_0,f_1,\ldots,f_n)$ by the vector space spanned by the elements
		$$(-1)^if_0\wedge\cdots\wedge\widehat{f_i}\wedge\cdots\wedge f_n -(-1)^j f_0\wedge\cdots\wedge\widehat{f_j}\wedge\cdots\wedge f_n$$ for $0\leq i<j\leq n$. Hence it is one-dimensional with basis $f_1\wedge\cdots\wedge f_n$. We let 
		$$\varphi(D)=f_1^\vee\wedge\cdots\wedge f_n^\vee $$
		be the dual linear form in $\left(\gr^W_0H(D)\right)^\vee$. 
		\end{itemize}
		
		\begin{defi}\label{defmotivicdissectionpolylog}
		We set $$I^\H(D)=(H(D),v(D),\varphi(D))\in\H_n$$ and call it the \textit{motivic dissection polylogarithm} corresponding to $D$.
		\end{defi}
		
		More geometrically, we get
		$$\gr_{2n}^WH(D)\cong H^n(\C^n\setminus L)=\Q[\omega_D]$$
		so that $v(D)$ is the cohomology class of the $n$-form $\omega_D$. We also have a commutative diagram
		
		$$
		\xymatrix{
		H(D)^\vee \ar[r]^>>>>>>>{\cong} \ar@{->>}[d] &H_n(\C^n\setminus L, M\setminus M\cap L) \ar@{->>}[d]^{\mu}\\
		\left(\gr_0^WH(D)\right)^\vee \ar[r]^{\cong}  & H_n(\C^n,M)
		}
		$$		
		Let $\Delta_D$ be any integration simplex for $I(D)$, and $[\Delta_D]$ its homology class in 	$H_n(\C^n\setminus L, M\setminus M\cap L)$. Then 
		$\varphi(D)=\mu\left([\Delta_D]\right)\in H_n(\C^n,M)$ is canonical and does not depend on the choice of $[\Delta_D]$. It corresponds to an oriented simplex in $\C^n$ whose boundary is contained in $M$.\\
		
		To sum up, we have
		$$I^\H(D)=\left(H(D),[\omega_D],\mu\left([\Delta_D]\right)\right).$$
		
		\begin{rem}\label{remP}
		In a particular situation where one has a preferred choice of $[\Delta_D]$, then a more natural thing to do is to work in the algebra $\P$ and not in $\H$. A candidate for the motivic dissection polylogarithm is then
		$$I^\P(D)=\left(H(D),[\omega_D],[\Delta_D]\right) \in\P_n.$$
		Its period is $$per(I^\P(D))=I(D)$$ computed with the same choice of $\Delta_D$.\\
		Via the surjection (\ref{surjectionPH}), $I^\P(D)$ is mapped to $I^\H(D)$. We stress the fact that $I^\H(D)$ only depends on the (generic) decorated diagram $D$, whereas $I(D)$ and $I^\P(D)$ also depend on the choice of a homology class $[\Delta_D]\in H_n(\C^n\setminus L,M\setminus M\cap L)$.
		\end{rem}
		
		\begin{thm}
		The relations $(R1)$, $(R2)$, $(R3)$, $(R4)$ from Propositions \ref{r1}, \ref{r2}, \ref{r3}, \ref{r4} remain true if we replace the dissection polylogarithms $I(D)$ by their motivic versions $I^\H(D)$. Thus, Theorem \ref{reductionitint} is also true in the motivic setting.
		\end{thm}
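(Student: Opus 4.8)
The plan is to upgrade each of the four proofs of Propositions~\ref{r1}--\ref{r4} from a statement about the complex numbers $I(D)$ to a statement about the framed objects $I^\H(D)=(H(D),v(D),\varphi(D))$, using that forming $I^\H$ is as functorial as possible and that a framed object is unchanged in $\H$ under a morphism of mixed Hodge--Tate structures matching up the framings, and that the triple $(H,v,\varphi)$ is linear in each framing.

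For $(R1)$ and $(R2)$ this is immediate. Each change of variables in the proofs of Propositions~\ref{r1} and~\ref{r2} is an affine-linear isomorphism $g$ of $\C^n$ carrying the bi-arrangement attached to $D$ onto the one attached to $\tau_i(\lambda).D$ (resp.\ $D^+$); by functoriality of Deligne's mixed Hodge structure it induces an isomorphism of mixed Hodge--Tate structures $H(\tau_i(\lambda).D)\xrightarrow{\ \sim\ }H(D)$ (resp.\ $H(D^+)\xrightarrow{\ \sim\ }H(D)$), and on graded pieces it sends $v$ to $v$ (for $(R2)$, to $\varepsilon v$, since $g^*\omega_D=\varepsilon\,\omega_{D^+}$) and $\varphi$ to $\varphi$. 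For $(R2)$ the factor $(-1)^n$, which in the analytic proof came from cyclically permuting the faces of the integration simplex, reappears here as the sign comparing, in the one-dimensional space $\gr^W_0 H(D^+)$, the class $f_1\wedge\cdots\wedge f_n$ of the product of the non-root sides of $D^+$ with the corresponding class for $D$, via the relations $(-1)^i f_{\{0,\dots,n\}\setminus\{i\}}$ all equal that define $\gr^W_0$. Linearity of framed objects and invariance under isomorphism then give $(R1)$ and $(R2)$ verbatim with $I^\H$ in place of $I$.

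For $(R3)$ and $(R4)$ the relation is not induced by an isomorphism but by a linear dependence among cohomology classes, and I would treat both by the same device. For $(R4)$, introduce the bi-arrangement $(\widehat L;\widehat M)$ in $\C^n$ attached to $\widehat D$; it need not be affinely generic (it is degenerate exactly along the cycle $C$), but the complement of any hyperplane arrangement has pure Tate cohomology, so the spectral sequence of the proof of Theorem~\ref{relativeBOS} still degenerates, $\widehat H:=H(\widehat L;\widehat M)$ is a mixed Hodge--Tate structure with $\gr^W_{2n}\widehat H=H^n(\C^n\setminus\widehat L)$, and $\gr^W_0\widehat H$ is one-dimensional (it depends only on $\widehat M$). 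For each $c\in C$ the deletion morphism $\delta_c:H(\widehat D\setminus\{c\})\to\widehat H$ is a morphism of mixed Hodge--Tate structures, equal to $e_I\otimes f_J\mapsto e_I\otimes f_J$ on graded pieces (pullback of logarithmic forms), and since all the diagrams $\widehat D\setminus\{c\}$ share the sides of $\widehat D$, the framings $\varphi(\widehat D\setminus\{c\})$ all factor as $\psi\circ\gr^W_0(\delta_c)$ for a single $\psi\in(\gr^W_0\widehat H)^\vee$, up to signs produced by the $\gr^W_0$-relations. Hence $I^\H(\widehat D\setminus\{c\})$ is equivalent in $\H$ to $(\widehat H,\delta_c(v(\widehat D\setminus\{c\})),\psi)$, and by linearity
\[
\sum_{c\in C}\varepsilon(c)\,I^\H(\widehat D\setminus\{c\})=\Bigl(\widehat H,\ \sum_{c\in C}\varepsilon(c)\,\delta_c\bigl(v(\widehat D\setminus\{c\})\bigr),\ \psi\Bigr).
\]
The inner sum vanishes in $\gr^W_{2n}\widehat H=H^n(\C^n\setminus\widehat L)$: it is precisely the Orlik--Solomon relation $\sum_{c\in C}\sgn(\{c\},C\setminus\{c\})\,\omega_{C\setminus\{c\}}=0$ (valid because the linear forms defining the $L_c$, $c\in C$, are dependent), multiplied on the right by $\omega_{\{1,\dots,n+1\}\setminus C}$, i.e.\ $(R4)$'s combination of the $\omega_{\widehat D\setminus\{c\}}$. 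This gives $(R4)$ in $\H$. For $(R3)$ the same strategy applies with the diagram $\widetilde D$: it yields an affinely generic bi-arrangement $(\widetilde L;\widetilde M)$ in $\C^{n+1}$, contracting the side $s$ realizes the bi-arrangement of $\partial_s\widetilde D$, and the contraction morphisms of Theorem~\ref{functoriality}(2) send $I^\H(\partial_i\widetilde D)$ and $I^\H(\partial_j\widetilde D)$ into $\widetilde H:=H(\widetilde L;\widetilde M)$ with framings factoring through one $\psi$; on the $v$-side the combination $(-1)^i\varepsilon_i I^\H(\partial_i\widetilde D)+(-1)^j\varepsilon_j I^\H(\partial_j\widetilde D)$ lands in the image of the coboundary relation $e_1\wedge\cdots\wedge e_n\otimes\sum_s \sgn(\{s\},\varnothing)f_s$ built into the presentation of $\gr^W_{2n}\widetilde H$ (using that $e_1\wedge\cdots\wedge e_n\otimes f_s=0$ for $s\notin\{i,j\}$, since adjoining such a side to the chords of $\widetilde D$ creates a cycle), and hence vanishes.

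The conceptual content of $(R3)$ and $(R4)$ is thus light; I expect the main obstacle to be the sign bookkeeping, specifically reconciling the signs hidden in the framings $\varphi$ (coming from the identifications $(-1)^i f_{\{0,\dots\}\setminus\{i\}}$ in the one-dimensional spaces $\gr^W_0$) with the combinatorial signs $\varepsilon(c)$, $\varepsilon_i$, $\varepsilon_j$, $(-1)^s$ occurring in $(R3)$ and $(R4)$. The cleanest way to make this airtight is probably not to match terms individually but to run the computation at the level of the complexes of sheaves $\mathcal{K}^\bullet$, $\Omega^\bullet_{(L;M)}$ of Remark~\ref{remlogforms}, where $(R3)$ becomes ``$d\omega=0$ paired against a chain'' (Stokes) and $(R4)$ becomes ``linear dependence of the $\omega_c$'' (Orlik--Solomon), both manifest, so that only the signs in the quasi-isomorphisms need tracking. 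Finally, the proof of Theorem~\ref{reductionitint} uses nothing about $I(D)$ beyond the relations $(R1)$--$(R4)$ and the fact that corollas with generic decorations give iterated integrals (through $(R1)$); since each of these steps is now an identity in $\H$, the same algorithm writes $I^\H(D)$ as an integer linear combination of motivic iterated integrals $\I^\H(a_0;a_1,\dots,a_n;a_{n+1})$, and likewise $I^\P(D)$ in $\P$.
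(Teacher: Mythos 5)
Your treatment of $(R3)$ coincides with the paper's own proof, which in fact only sketches $(R3)$ (the result is not used later) and declares the other relations similar: push the two framed objects into $H(L;M)$ of $\widetilde{D}$ via the contraction morphisms of Theorem \ref{functoriality}, then kill the sum of the $v$-framings using both families of relations in Theorem \ref{relativeBOS} (the cycle relations $e_1\wedge\cdots\wedge e_n\otimes f_s=0$ for $s\notin\{i,j\}$ and the coboundary relation $\sum_s f_s$) — exactly your argument. For $(R1)$, $(R2)$ and $(R4)$ the paper gives no details, and your sketches are the natural completions: $(R1)$/$(R2)$ via the affine change of variables inducing an isomorphism of mixed Hodge--Tate structures matching the framings, and $(R4)$ via the deletion morphisms into $\widehat{H}=H(\widehat{L};\widehat{M})$ together with the Orlik--Solomon relation in the top weight piece. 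One point in $(R4)$ needs more care than you give it: $(\widehat{L};\widehat{M})$ is not affinely generic, so Theorems \ref{relativeBOS} and \ref{functoriality} do not apply as stated. Re-running the spectral sequence is indeed unproblematic (purity of arrangement-complement cohomology and Artin vanishing hold for arbitrary arrangements, giving $\gr^W_{2n}\widehat{H}\cong H^n(\C^n\setminus\widehat{L})$), but the assertion that $\gr^W_0\widehat{H}$ is one-dimensional and ``depends only on $\widehat{M}$'', and hence that all the framings $\varphi(\widehat{D}\setminus\{c\})$ factor through a single $\psi$ via $\gr^W_0(\delta_c)$, requires an extra argument: one must check that the codimension-$n$ strata of $\widehat{M}$ avoid every hyperplane of $\widehat{L}$, which does follow from the genericity of each $\widehat{D}\setminus\{c\}$ (each such stratum avoids all $L_{c'}$ with $c'\neq c$, and varying $c$ excludes them all), but is not automatic. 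With that observation, and the sign bookkeeping you explicitly defer (which the paper also does not carry out beyond its sketch of $(R3)$), your argument goes through, and your final reduction to the motivic statement of Theorem \ref{reductionitint} is exactly as intended.
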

		
		\begin{rem}
		Of course, Remark \ref{remreditint} also applies in this setting. Furthermore, it is worth noting that the reduction to iterated integrals does not tell us anything about the combinatorial shape of the coproduct of the motivic dissection polylogarithms (Theorem \ref{maintheorem} below).
		\end{rem}
		
		\begin{proof}
		We will not use this result in the sequel so we just sketch the proof that $(R3)$ is true in the motivic setting. Let $(L;M)=(L_1,\ldots,L_n;M_0,M_1,\ldots,M_{n+1})$ be the bi-arrangement of hyperplanes given by $\wt{D}$. 
		By definition $\varepsilon_i\,I^\H(\partial_i\wt{D})$ is the triple
		$$(H(M_i|L;M_0,\ldots,\widehat{M_i},\ldots,M_{n+1}),e_1\wedge\cdots\wedge e_n,f_1^\vee\wedge\cdots\wedge \widehat{f_i^\vee}\wedge\cdots\wedge f_{n+1}^\vee).$$
		Thus, using the natural morphism
		$$H(M_i|L;M_0,\ldots,\widehat{M_i},\ldots,M_{n+1})\rightarrow H(L;M)$$
		from Theorem \ref{functoriality}, we see that this triple is equivalent to 
		$$(H(L;M),(e_1\wedge\cdots\wedge e_n)\otimes f_i,f_i^\vee\wedge f_1^\vee\wedge\cdots\wedge \widehat{f_i^\vee}\wedge\cdots\wedge f_{n+1}^\vee),$$
		hence $(-1)^i\varepsilon_i\,I^\H(\partial_i\wt{D})$ is the triple
		$$(H(L;M),(e_1\wedge\cdots\wedge e_n)\otimes f_i,-f_1^\vee\wedge\cdots\wedge f_{n+1}^\vee)$$
		and the sum $(-1)^i\varepsilon_i\,I^\H(\partial_i\wt{D})+(-1)^j\varepsilon_j\,I^\H(\partial_j\wt{D})$ is the triple
		$$(H(L;M),(e_1\wedge\cdots\wedge e_n)\otimes (f_i+f_j),-f_1^\vee\wedge\cdots\wedge f_{n+1}^\vee).$$
		Thus it is enough to prove that $(e_1\wedge\cdots\wedge e_n)\otimes (f_i+f_j)=0$.\\
		For $s\notin\{i,j\}$, $(\{1,\ldots,n\};\{s\})$, $L_1\cap\cdots\cap L_n\cap M_s=\varnothing$ because the corresponding subgraph in the total graph of $\wt{D}$ has a cycle. Hence the first relation of Theorem \ref{relativeBOS} gives $(e_1\wedge\cdots\wedge e_n)\otimes f_s=0$ and
		$$(e_1\wedge\cdots\wedge e_n)\otimes (f_i+f_j)=(e_1\wedge\cdots\wedge e_n)\otimes\sum_{s=0}^{n+1}f_s=0$$
		using the second relation.
		\end{proof}
		
		\begin{rem}
		The above theorem is also valid if we work in $\P$ with the elements $I^\P(D)$ (see Remark \ref{remP}; in this setting the integration simplices have to be chosen coherently as in \S\ref{sectionrelations}).
		\end{rem}
	
	\subsection{The computation of the coproduct}

		\begin{prop}\label{propbasis}
		Let $D$ be a generic decorated dissection diagram and $k\in\{0,\ldots,n\}$. The classes of the elements $$b_C=e_C\otimes f_{\s_C^+}$$ for $C\subset\c\simeq\{1,\ldots,n\}$, $|C|=k$, form a basis of $\gr_{2k}^WH(D)$.
		\end{prop}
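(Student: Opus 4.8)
The plan is to make the Brieskorn-Orlik-Solomon presentation of $\gr^W_{2k}H(D)$ supplied by Theorem \ref{relativeBOS} completely explicit, one $e$-isotypic summand at a time, and then to identify each summand with $\gr^W_0$ of a product of smaller bi-arrangements via the first part of Lemma \ref{lembiarrangementsRQ} and the K\"{u}nneth isomorphism of Theorem \ref{functoriality}.

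First I would note that every generator of $R_k(L;M)$ in \eqref{relativeBOSiso} has the form $e_I\otimes(\cdots)$ with $|I|=k$, so that presentation splits as a direct sum indexed by the $k$-subsets $I$ of $\c\simeq\{1,\ldots,n\}$:
$$\gr^W_{2k}H(D)\;\cong\;\bigoplus_{|I|=k} e_I\otimes V_I,\qquad V_I:=\Lambda^{n-k}(f_0,\ldots,f_n)\big/ R^{(I)}_k,$$
where $R^{(I)}_k$ is spanned by the $f_J$ with $L_I\cap M_J=\varnothing$ (for $|J|=n-k$) and by the elements $\sum_{j\notin J'}\sgn(\{j\},J')\,f_{J'\cup\{j\}}$ (for $|J'|=n-k-1$). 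Since $|\s_C^+|=n-|C|=n-k$, the element $b_C=e_C\otimes f_{\s_C^+}$ lies in the summand $e_C\otimes V_C$. It therefore suffices to prove that for every $k$-subset $I$ the space $V_I$ is one-dimensional and the class of $f_{\s_I^+}$ is a nonzero vector in it: then each $b_C$ spans its summand $e_C\otimes V_C$, and the $b_C$ with $|C|=k$ form a basis of $\gr^W_{2k}H(D)$.

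To compute $V_I$ I would compare presentations. The contraction $(L_I|L(\overline{I});M)$ of $(L;M)$ along $L_I$ is an affinely generic bi-arrangement in $\C^{n-k}$ whose $M$-hyperplanes are the $M_j\cap L_I$, and $M_J\cap L_I=\varnothing$ precisely when $L_I\cap M_J=\varnothing$; hence Theorem \ref{relativeBOS} applied in weight $0$ to this bi-arrangement produces exactly the two families of relations cutting out $R^{(I)}_k$, i.e. $\gr^W_0 H(L_I|L(\overline{I});M)\cong V_I$. By the first part of Lemma \ref{lembiarrangementsRQ}, $(L_I|L(\overline{I});M)$ is isomorphic to the product $\prod_\alpha(L^{(\alpha)};M^{(\alpha)})$ of the bi-arrangements attached to the dissection diagrams $q_I^\alpha(D)$, so the K\"{u}nneth isomorphism of Theorem \ref{functoriality} gives
$$V_I\;\cong\;\gr^W_0 H\Bigl(\textstyle\prod_\alpha (L^{(\alpha)};M^{(\alpha)})\Bigr)\;\cong\;\bigotimes_\alpha \gr^W_0 H\bigl(q_I^\alpha(D)\bigr).$$
Each factor is the weight-$0$ part of the mixed Hodge-Tate structure of a dissection diagram (possibly of degree $0$), which, as computed just before Definition \ref{defmotivicdissectionpolylog}, is one-dimensional with basis the wedge of the generators $f_s$ over the \emph{non-root} sides $s$ of $q_I^\alpha(D)$ — and those non-root sides are, by construction, the elements of $\s_I^+(\alpha)$. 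Hence $\dim V_I=1$.

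It remains to follow $f_{\s_I^+}$ through these identifications. Under the K\"{u}nneth map one has $\bigotimes_\alpha f_{\s_I^+(\alpha)}\mapsto f_{\bigsqcup_\alpha \s_I^+(\alpha)}=f_{\s_I^+}$, and the left-hand side is, up to sign, a product of the nonzero generators just exhibited, hence nonzero; so the class of $f_{\s_I^+}$ generates $V_I$, which completes the argument. I expect the one genuinely delicate point to be the presentation matching in the previous paragraph: checking that contracting $(L;M)$ along $L_I$ yields, at the level of Brieskorn-Orlik-Solomon presentations, exactly the ideal $R^{(I)}_k$, and bookkeeping the signs in the residue and K\"{u}nneth maps of Theorem \ref{functoriality}. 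The purely combinatorial facts — that $L_I\cap M_J=\varnothing$ iff the subgraph $I\cup J$ of $\Gamma(D)$ contains an undirected cycle, together with the transversal description of those $J$ for which it does not (Lemmas \ref{lemrelationscycles} and \ref{lemSTacyclic}) — are what make these identifications transparent, and in particular explain why $\s_I^+$ is one of the surviving indices; but once the presentation comparison is in place they are not logically indispensable.
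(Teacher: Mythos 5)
Your argument is correct in substance, but it follows a genuinely different route from the paper. The paper also begins by splitting the presentation of Theorem \ref{relativeBOS} into $e_I$-isotypic summands, but then computes each summand $V_I$ purely combinatorially: using Lemmas \ref{lemrelationscycles} and \ref{lemSTacyclic} it identifies the surviving generators $f_{J(u_0,\ldots,u_k)}$ (one choice of omitted side $u_\alpha$ per face), observes that the type-2 relations identify all of them up to sign, and then establishes non-vanishing of the quotient by a sign-coherence statement (Lemma \ref{lemsignsgraphs}, proved in Appendix \ref{appB}). You instead observe that $V_I$ has literally the same presentation as $\gr^W_0H(L_I|L(\overline{I});M)$, factor the contracted bi-arrangement through Lemma \ref{lembiarrangementsRQ} and the K\"unneth map of Theorem \ref{functoriality}, and reduce to the one-dimensionality of $\gr^W_0$ of each $q_I^\alpha(D)$, which is recorded just before Definition \ref{defmotivicdissectionpolylog}. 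This buys you a proof with no sign bookkeeping (Appendix \ref{appB} becomes unnecessary for this statement), at the cost of importing the geometric functoriality that the paper only deploys later, in step 1 of the proof of Theorem \ref{maintheorem} --- indeed your argument is essentially that step run in reverse, and there is no circularity since none of the inputs you invoke depend on Proposition \ref{propbasis}. Two points deserve explicit care in your write-up: first, Theorem \ref{functoriality} states K\"unneth only as a morphism, so you should justify that it is an isomorphism (e.g.\ topological K\"unneth for the pair together with the vanishing of the relative cohomology above the middle degree, which follows from the spectral sequence in the proof of Theorem \ref{relativeBOS}); second, when the dissection by $I$ has faces with a single polygon side, the corresponding $M_j\cap L_I$ is empty and the degree-$0$ factors $q_I^\alpha(D)$ are degenerate, so the contraction is not literally a bi-arrangement of hyperplanes --- one should either discard those members after noting that every $f_J$ with $j\in J$ is already killed by the type-1 relations, or treat those factors as contributing trivially. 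Both points are consistent with how the paper itself uses these identifications, so they are refinements rather than gaps.
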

	
		\begin{proof}
		From Theorem \ref{relativeBOS} we get a presentation
		$$\gr^W_{2k}H(L;M)\cong \bigoplus_{|I|=k}\Q e_I\otimes \left(\Lambda^{n-k}(f_1,\ldots,f_m)/R_I(L;M)\right)$$
		where $R_I(L;M)$ is spanned by the elements
		\begin{enumerate}
		\item $f_J$ if $L_I\cap M_J=\varnothing$.
		\item $\displaystyle\sum_{j\notin J'}\sgn(\{j\},J')f_{J'\cup\{j\}}$ for $|J'|=n-k-1$.
		\end{enumerate}
		Let us fix a subset $I\subset\{1,\ldots,n\}$, viewed as a subset $C\subset\c$ of chords of $D$. We want to prove that the quotient of $\Lambda^{n-k}(f_0,f_1,\ldots,f_n)$ by relations 1. and 2. above is one-dimensional with basis element $f_{\s_C^+}$.
		\begin{enumerate}
		\item
		Let us write $\{0,\ldots,n\}=\s=\s_C(0)\sqcup\cdots\sqcup \s_C(k)$ the partition (\ref{partition}) of $\s$ given by the dissection defined by $C$.		
		 From Lemma \ref{lemrelationscycles} and Lemma \ref{lemSTacyclic}, we see that the only subsets $J\subset\{0,\ldots,n\}$ such that $L_I\cap M_J\neq \varnothing$ are 
		$$J(u_0,\ldots,u_k)=(\s_C(0)\setminus\{u_0\})\sqcup\cdots\sqcup (\s_C(k)\setminus\{u_k\})$$
		for some choice of $u_\alpha\in\s_C(\alpha)$.\\
		Thus the quotient of $\Lambda^{n-k}(f_0,f_1,\ldots,f_n)$ by relation $1.$ has a natural basis consisting of the elements $f_{J(u_0,\ldots,u_k)}$.
		\item Let us write $f(u_0,\ldots,u_k)=f_{J(u_0,\ldots,u_k)}$ for simplicity. We investigate the relations between the elements $f(u_0,\ldots,u_k)$ implied by relation $2$. The only non-trivial ones come from subsets
		$$J'=(\s_C(0)\setminus\{u_0\})\sqcup\cdots \sqcup(\s_C(i)\setminus\{a_i,b_i\})\sqcup\cdots\sqcup (\s_C(k)\setminus\{u_k\})$$
		with $a_i\neq b_i$, and are of the form
		\begin{equation}\label{formsigns}
		\sgn(\{a_i\},J')f(u_0,\ldots,a_i,\ldots,u_k)+\sgn(\{b_i\},J')f(u_0,\ldots,b_i,\ldots,u_k)=0
		\end{equation}
		Hence in the quotient of $\Lambda^{n-k}(f_0,\ldots,f_n)$ by relations 1. and 2., all the elements $f(u_0,\ldots,u_k)$ are equal up to a sign, hence this quotient is spanned by any of these elements. If we choose $u_\alpha=\min(\s_C(\alpha))$ for each $\alpha$, we get $J(u_0,\ldots,u_k)=\s_C^+$ by definition. Thus all there is to prove is that the elements $f(u_0,\ldots,u_k)$ are all non-zero in the quotient. This follows from a compatibility between the signs in formula (\ref{formsigns}), which is the content of the next lemma.
		\end{enumerate}
		\end{proof} 
		
		\begin{lem}\label{lemsignsgraphs}
		Let us define a graph whose vertices are the tuples $(u_0,\ldots,u_k)$ with $u_\alpha\in \s_C(\alpha)$ for every $\alpha=0,\ldots,k$. We put an edge between the pairs of the form $(u_0,\ldots,a_i,\ldots,u_k)$ and $(u_0,\ldots,b_i,\ldots,u_k)$ for $a_i\neq b_i$ in $\s_C(i)$. Let us decorate such an edge by the sign
		$$-\sgn(\{a_i\},J')\,\sgn(\{b_i\},J')$$ with $J'=(\s_C(0)\setminus\{u_0\})\sqcup\cdots \sqcup(\s_C(i)\setminus\{a_i,b_i\})\sqcup\cdots\sqcup (\s_C(k)\setminus\{u_k\})$.\\
		Then for every loop in this graph, the product of the signs of the edges of the loop is $1$.
		\end{lem}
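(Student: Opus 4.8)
The plan is to reduce the statement to a purely local sign computation by showing that the "edge signs" on this graph are the coboundary of a function on vertices, so that every cycle automatically has sign product $1$. First I would fix notation: order $\s_C(\alpha)=\{s_{\alpha,1}<s_{\alpha,2}<\cdots<s_{\alpha,n_\alpha+1}\}$ for each $\alpha$, where $n_\alpha=|\s_C(\alpha)|-1$, and note that the ambient linearly ordered set is $\s=\{0,\ldots,n\}=\bigsqcup_\alpha\s_C(\alpha)$. For a tuple $(u_0,\ldots,u_k)$ with $u_\alpha\in\s_C(\alpha)$, the set $J(u_0,\ldots,u_k)=\bigsqcup_\alpha(\s_C(\alpha)\setminus\{u_\alpha\})$ has cardinality $n-k$, and it is convenient to record, for each $\alpha$, the \emph{rank} $\rho_\alpha(u_\alpha)\in\{1,\ldots,n_\alpha+1\}$ of $u_\alpha$ inside $\s_C(\alpha)$. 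The key observation is that since the blocks $\s_C(\alpha)$ are \emph{consecutive} among themselves only up to the global order on $\s$ — they need not be intervals — the sign $\sgn(\{a_i\},J')$ depends on how many elements of $J'$ are smaller than $a_i$ in the global order, which splits as a contribution from block $i$ plus a contribution from the other blocks. The contribution from the other blocks is the \emph{same} for $a_i$ and for $b_i$ (it only depends on $\sum_{\beta\ne i}|\{x\in\s_C(\beta)\setminus\{u_\beta\}:x<a_i\}|$, and $a_i,b_i$ lie in the same block $i$ so they compare the same way to elements outside block $i$, \emph{up to} the elements of block $i$ that sit between $a_i$ and $b_i$ globally).

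The main step is therefore to pin down the local sign. Write, for a block $\s_C(i)$ and two distinct elements $a,b\in\s_C(i)$ with $J'$ as in the statement, $\sgn(\{a\},J')=(-1)^{p(a)}$ where $p(a)$ counts elements of $J'$ below $a$. Then
\begin{equation*}
-\sgn(\{a\},J')\sgn(\{b\},J') = (-1)^{1+p(a)+p(b)}.
\end{equation*}
Because $J'=(\s_C(i)\setminus\{a,b\})\sqcup\bigsqcup_{\beta\ne i}(\s_C(\beta)\setminus\{u_\beta\})$, the numbers $p(a)$ and $p(b)$ differ only by contributions internal to block $i$: the elements of $\s_C(i)\setminus\{a,b\}$ lying between $a$ and $b$ all count for exactly one of $p(a),p(b)$. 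If $\rho_i(a)<\rho_i(b)$ then there are $\rho_i(b)-\rho_i(a)-1$ such internal elements, and one checks $p(b)-p(a)\equiv \rho_i(b)-\rho_i(a)-1\pmod 2$, so $1+p(a)+p(b)\equiv \rho_i(b)-\rho_i(a)\pmod 2$, i.e.\ the edge sign equals $(-1)^{\rho_i(b)-\rho_i(a)}$. Crucially this is \emph{independent of the $u_\beta$, $\beta\ne i$}, and independent of anything except the two ranks $\rho_i(a),\rho_i(b)$.

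With this in hand the conclusion is immediate: define the vertex function $g(u_0,\ldots,u_k)=\sum_{\alpha=0}^k\rho_\alpha(u_\alpha)$, and set $h(u_0,\ldots,u_k)=(-1)^{g(u_0,\ldots,u_k)}$. An edge of the graph joins $(u_0,\ldots,a_i,\ldots,u_k)$ and $(u_0,\ldots,b_i,\ldots,u_k)$, across which $g$ changes by $\pm(\rho_i(b_i)-\rho_i(a_i))$, so the ratio of the $h$-values is exactly $(-1)^{\rho_i(b_i)-\rho_i(a_i)}$, which we just identified with the edge sign. Thus the edge sign is the coboundary $h(\text{endpoint}_1)\,h(\text{endpoint}_2)$ (signs are in $\{\pm1\}$, so this is a ratio), and the product of edge signs around any loop telescopes to $1$.

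I expect the main obstacle to be the careful bookkeeping in the local sign computation — specifically, verifying $p(b)-p(a)\equiv \rho_i(b)-\rho_i(a)-1\pmod2$, which requires being honest about the fact that the blocks $\s_C(\alpha)$ are interleaved in the global order on $\s$ rather than being intervals, and checking that the "cross-block" contributions to $p(a)$ and $p(b)$ genuinely cancel modulo $2$ regardless of how the blocks are interleaved. Once that parity identity is secured, the cocycle/telescoping argument is routine. One should also treat with care the orientation convention in $\sgn(\{i_r\},I\setminus\{i_r\})=(-1)^{r-1}$ from the conventions section, since the sign of $\sgn(\{a\},J')$ is defined via $x_{\{a\}\sqcup J'}=\sgn(\{a\},J')\,x_a\wedge x_{J'}$, making $p(a)$ literally the number of elements of $J'$ preceding $a$ in the global order — a fact I would state explicitly before the computation.
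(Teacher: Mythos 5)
Your overall strategy --- exhibiting a vertex function $h$ with values in $\{\pm1\}$ whose coboundary is the system of edge signs, so that products over loops telescope --- is legitimate and genuinely different from the paper's proof (which reduces to back-and-forth loops, triangles within one block, and squares across two blocks, and verifies an explicit sign identity for each). But your key local computation is false, and with it your choice of potential. You claim the edge sign $-\sgn(\{a_i\},J')\,\sgn(\{b_i\},J')$ equals $(-1)^{\rho_i(b_i)-\rho_i(a_i)}$, on the grounds that the cross-block contributions to $p(a_i)$ and $p(b_i)$ cancel modulo $2$. They do not: the blocks $\s_C(\beta)$ are interleaved (in fact nested, by planarity) in the global order on $\s$, so a whole block $\s_C(\beta)$, $\beta\neq i$, can lie strictly between $a_i$ and $b_i$, contributing $|\s_C(\beta)|-1$ elements of $J'$ to the interval $(a_i,b_i)$ --- a number of arbitrary parity. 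Concretely, in Example \ref{exoperations} one has $\s=\{0,1,2,3\}$ partitioned into the blocks $\{0,3\}$ and $\{1,2\}$; for the edge with $i$ the block $\{0,3\}$, $a_i=0$, $b_i=3$ and $u_\beta=1$ in the other block, one gets $J'=\{2\}$ and the edge sign is $-\sgn(\{0\},\{2\})\,\sgn(\{3\},\{2\})=-(+1)(-1)=+1$, while your formula predicts $(-1)^{2-1}=-1$. So the parity identity $p(b_i)-p(a_i)\equiv\rho_i(b_i)-\rho_i(a_i)-1\pmod 2$ that you yourself flagged as the main obstacle indeed fails, your $h$ is not a potential for these signs, and the telescoping argument collapses.

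The argument can be repaired within your framework, but with a different potential. A direct count gives, for $a_i<b_i$,
$$-\sgn(\{a_i\},J')\,\sgn(\{b_i\},J')=(-1)^{\,(b_i-a_i)+\#\{\beta\neq i\,:\,u_\beta\in(a_i,b_i)\}},$$
since $\#\bigl(J'\cap(a_i,b_i)\bigr)=(b_i-a_i-1)-\#\{\beta\neq i:u_\beta\in(a_i,b_i)\}$ (the blocks partition $\{0,\ldots,n\}$). This sign genuinely involves global positions and the other coordinates, not within-block ranks; it is the coboundary of $h(u_0,\ldots,u_k)=(-1)^{\sum_\alpha u_\alpha+\mathrm{inv}(u_0,\ldots,u_k)}$, where $\mathrm{inv}$ counts the pairs $\alpha<\beta$ with $u_\alpha>u_\beta$: changing $u_i$ from $a_i$ to $b_i$ changes $\sum_\alpha u_\alpha$ by $b_i-a_i$ and changes $\mathrm{inv}$ by $\#\{\beta\neq i:u_\beta\in(a_i,b_i)\}$ modulo $2$. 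With that replacement your telescoping argument is correct (and works for an arbitrary partition of a linearly ordered set), whereas the rank-based local formula is simply wrong; alternatively one can follow the paper and check the sign identities on trivial loops, triangles and squares, which generate all loops of this product-of-complete-graphs graph.
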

		
		\begin{proof}
		See \ref{appB}.
		\end{proof}
	
		\begin{thm}\label{maintheorem}
		The coproduct of the motivic dissection polylogarithms is given by the formula 
		\begin{equation}\label{maintheoremformula}
		\Delta(I^\H(D))=\sum_{C\subset \c(D)}(-1)^{k_C(D)}I^\H(q_C(D))\otimes I^\H(r_C(D))
		\end{equation}
		where $I^\H(q_C(D))$ is understood as the product $\prod_\alpha I^\H(q_C^\alpha(D))$.\\
		In other words, the morphism $$\D^{gen}(\C)\rightarrow \H \,\, , \,\, D\mapsto I^\H(D)$$ is a morphism of graded Hopf algebras.
		\end{thm}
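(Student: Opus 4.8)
The plan is to reduce the theorem to the coproduct identity \eqref{maintheoremformula} and then prove that identity by feeding the basis of Proposition~\ref{propbasis} into the abstract coproduct \eqref{defabstractcoproduct} and using the functoriality of Theorem~\ref{functoriality}. The formal part is quickly dealt with: $\D^{gen}(\C)$ is the free commutative algebra on the generic decorated dissection diagrams of positive degree, so $D\mapsto I^\H(D)$ extends uniquely to an algebra morphism $\phi\colon\D^{gen}(\C)\to\H$, which is genuinely multiplicative because the bi-arrangement of a disjoint union of diagrams is the product of the bi-arrangements and the K\"unneth morphism of Theorem~\ref{functoriality}(5) matches the framings, so that $\phi(q_C(D))=\prod_\alpha I^\H(q_C^\alpha(D))=I^\H(q_C(D))$. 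Both $\Delta_\H\circ\phi$ and $(\phi\otimes\phi)\circ\Delta_{\D}$ are then algebra morphisms, hence agree as soon as they agree on the generators; so the theorem is equivalent to \eqref{maintheoremformula} for a single generic diagram $D$, compatibility with the counit and the antipode being automatic since everything is graded and connected.

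Now fix $D$ of degree $n$ and $k\in\{0,\dots,n\}$. Feeding the basis $(b_C)_{|C|=k}$, $b_C=e_C\otimes f_{\s_C^+}$, of $\gr^W_{2k}H(D)$ from Proposition~\ref{propbasis} into \eqref{defabstractcoproduct} gives
$$\Delta_{n-k,k}\bigl(I^\H(D)\bigr)=\sum_{|C|=k}\bigl(H(D)(k),v(D),b_C^\vee\bigr)\otimes\bigl(H(D),b_C,\varphi(D)\bigr),$$
so it remains, for each $C$, to identify the first tensor factor with $I^\H(q_C(D))$ and the second with $I^\H(r_C(D))$, up to a global sign to be pinned down as $(-1)^{k_C(D)}$.

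For the first factor I would use Lemma~\ref{lembiarrangementsRQ}(1), which identifies the bi-arrangement of $q_C(D)$ with the contracted bi-arrangement $(L_C|L(\overline C);M)$, together with the K\"unneth isomorphism of Theorem~\ref{functoriality}(5), which splits its cohomology as $\bigotimes_\alpha H(q_C^\alpha(D))$ compatibly with the framings $v(q_C^\alpha(D))$ and $\varphi(q_C^\alpha(D))$. The contraction morphism $H(D)(k)\to H(L_C|L(\overline C);M)$ of Theorem~\ref{functoriality}(4) is then the morphism of framed objects witnessing the equivalence: on top weight it sends $v(D)=e_\c$ to $\pm v(q_C(D))$, and on weight $0$ it annihilates $b_{C'}$ for $C'\neq C$ and sends $b_C$ to a sign times the weight-$0$ generator, whose pairing with $\varphi(q_C(D))$ is $\pm1$, so that $\varphi(q_C(D))\circ\gr^W_0(\cdot)=\pm b_C^\vee$. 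For the second factor, Lemma~\ref{lembiarrangementsRQ}(2) gives the bi-arrangement of $r_C(D)$ as $(M_{\s_C^+}|L(C);M_0,M(\overline{\s_C^+}))$, and the required morphism of framed objects $g\colon H(r_C(D))\to H(D)$ is the composite of the contraction morphism of Theorem~\ref{functoriality}(2) with $J_0=\s_C^+$ and the deletion morphism of Theorem~\ref{functoriality}(3) with $I_0=C$, namely $g(e_I\otimes f_J)=\sgn(\s_C^+,J)\,e_I\otimes f_{\s_C^+\cup J}$; on top weight it sends $v(r_C(D))$ to $\pm b_C$, and on weight $0$ one checks $\varphi(D)\circ\gr^W_0 g=\pm\varphi(r_C(D))$ using $\s_C^+\sqcup\overline{\s_C^+}=\s^+$ and that $\varphi(D)$ is dual to $f_{\s^+}$.

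The main obstacle is the sign. One has to show that the product of all the local signs collapses to $(-1)^{k_C(D)}$: the $\sgn(\cdot,\cdot)$'s appearing in the morphisms of Theorem~\ref{functoriality}, the shuffle signs relating $e_\c$, $e_{\overline C}$ and $e_C$ and coming from the partition $\overline C=\bigsqcup_\alpha\overline C(\alpha)$ (and their analogues on the $f$-side coming from the partition \eqref{partition} of $\s$), the signs produced by relabelling the chords and sides of the $q_C^\alpha(D)$ and of $r_C(D)$ according to their own conventions, and the signs produced by the chord flips defining $\k_C(D)$. The underlying mechanism is already visible in the worked example following Lemma~\ref{lemhopfalgebra}: when there are several faces $\alpha$, the chords of $\overline C$ listed face by face are not in the order inherited from $D$, and the sign of this reordering is exactly what records $\k_C(D)$. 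I would isolate this bookkeeping as a purely combinatorial sign lemma, to be proved in Appendix~\ref{appC}, whose proof amounts to tracking chord by chord of $C$ how the orientation towards the successive roots changes under contraction; the contributions on the $f$-side are then forced to cancel because $\varphi(D)$ is dual to $f_{\s^+}$ and $\s^+$ is partitioned compatibly with the two sides of the coproduct. Together with the two identifications above and the reduction of the first paragraph, this yields \eqref{maintheoremformula}, and hence the theorem.
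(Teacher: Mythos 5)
Your proposal follows essentially the same route as the paper: it feeds the basis $b_C=e_C\otimes f_{\s_C^+}$ of Proposition~\ref{propbasis} into the abstract coproduct \eqref{defabstractcoproduct}, identifies the left factor with $I^\H(q_C(D))$ via Lemma~\ref{lembiarrangementsRQ}(1), the contraction morphism of Theorem~\ref{functoriality}(4) and K\"unneth, identifies the right factor with $I^\H(r_C(D))$ via Lemma~\ref{lembiarrangementsRQ}(2) and the composite contraction/deletion morphism (the paper's morphism \eqref{morphismface}), and defers the collapse of the accumulated signs $\sgn(\overline{C},C)\,\sgn(\nu_C)\,\sgn(\s_C^+,\overline{\s_C^+})\,\sgn(\eta_C)$ to $(-1)^{k_C(D)}$ to a combinatorial lemma, exactly as the paper does with Lemma~\ref{lemproductsigns} and Appendix~\ref{appC}, with the correct mechanism (the relabelling permutation records the flipped chords $\k_C(D)$). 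This matches the paper's proof, with the added (harmless) explicit reduction to generators via multiplicativity.
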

	
		\begin{proof}
		According to formula (\ref{defabstractcoproduct}) and Proposition \ref{propbasis}, we get
		$$\Delta_{n-k,k}(I^\H(D))=\sum_{\substack{C\subset\{1,\ldots,n\}\\|C|=k}} (H(D)(k),v(D),b_C^\vee)\otimes(H(D),b_C,\varphi(D))$$ 
		\begin{enumerate}
		
		\item We show that $(H(D)(k),v(D),b_C^\vee)=\pm I^\H(q_C(D))$.\\
		First, let us look at the bi-arrangement $$(L_C|L(\overline{C});M).$$ By Lemma \ref{lembiarrangementsRQ} and the K\"unneth isomorphism, we have an isomorphism
		$$H(L_S|L(\overline{S});M)\cong \bigotimes_\alpha H(q_S^\alpha(D))$$
		hence the graded $0$ part 
		$$\gr_0^WH(L_C|L(\overline{C});M)$$
		is one-dimensional and spanned by the vector $\bigwedge_\alpha f_{\s_C^+(\alpha)}=\pm f_{\s_C^+}$.\\
		Let us consider the residue morphism (Theorem \ref{functoriality})
		$$H(D)(k)\rightarrow H(L_C|L(\overline{C});M).$$
		On the $\gr^W_{2(n-k)}$ part, it sends $v(D)=e_1\wedge\cdots\wedge e_n$ to $\sgn(\overline{C},C)e_{\overline{C}}$.\\
		On the $\gr^W_0$ part, it sends $b_C=e_C\otimes f_{\s_C^+}$ to $f_{\s_C^+}$ and all the other basis elements $b_{C'}$ to $0$.\\
		Thus it gives an identification
		$$(H(D)(k),v(D),b_C^\vee)=\sgn(\overline{C},C)(H(L_C|L(\overline{C});M),e_{\overline{C}},f_{\s_C^+}^\vee).$$
		
		For each $\alpha$, let $\nu^\alpha_C:\overline{C}(\alpha)\stackrel{\simeq}{\rightarrow}\s_C^+(\alpha)$ be the bijection (\ref{conventionbijection}) given by the dissection diagram $q_C^\alpha(D)$, and let $\nu_C:\overline{C}\stackrel{\simeq}{\rightarrow}\s_C^+$ be the bijection induced by the $\nu^\alpha_C$'s. This bijection accounts for the reordering of the hyperplanes, and gives a sign
		$$(H(L_C|L(\overline{C});M),e_{\overline{C}},f_{\s_C^+}^\vee)=\sgn(\nu_C)\prod_\alpha I^\H(q_C^\alpha(D))$$
		hence the equality
		$$(H(D)(k),v(D),b_C^\vee)=\sgn(\overline{C},C)\,\sgn(\nu_C) I^\H(q_C(D)).$$
		
		\item We show that $(H(D),b_C,\varphi(D))=\pm I^\H(r_C(D))$.\\		
		First let us consider the bi-arrangement of hyperplanes 
		$$\left(\left. M_{\s_C^+}\right\vert L(C);M_0,M\left(\overline{\s_C^+}\right)\right).$$
		According to Lemma \ref{lembiarrangementsRQ}, it is exactly the one given by the dissection diagram $r_C(D)$, so we get
		$$H\left(\left. M_{\s_C^+}\right\vert L(C);M_0,M\left(\overline{\s_C^+}\right)\right)\cong H(r_C(D))$$	
		and the graded $0$ part
		$$\gr_0^WH\left(\left. M_{\s_C^+}\right\vert L(C);M_0,M\left(\overline{\s_C^+}\right)\right)$$
		is one-dimensional and spanned by the vector $f_{\s_C^+}$.\\
		Let us consider the morphism (Theorem \ref{functoriality})
		\begin{equation}\label{morphismface}
		H\left(\left. M_{\s_C^+}\right\vert L(C);M_0,M\left(\overline{\s_C^+}\right)\right)\rightarrow H(L;M).
		\end{equation}
		On the $\gr_{2k}^W$ part, it sends $e_C$ to $b_C=e_C\otimes f_{\s_C^+}$.\\
		On the $\gr_0^W$ part, it sends $f_{\s_C^+}$ to $\sgn(\s_C^+,\overline{\s_C^+})f_1\wedge\cdots\wedge f_n$.\\
		Thus it gives an identification
		$$(H(D),b_C,\varphi(D))=\sgn(\s_C^+,\overline{\s_C^+}) \left(H\left(\left. M_{\s_C^+}\right\vert L(C);M_0,M\left(\overline{\s_C^+}\right)\right),e_C,f_{\s_C^+}^\vee\right).$$ 
		Because of the ordering conventions, we have 
		$$\left(H\left(\left. M_{\s_C^+}\right\vert L(C);M_0,M\left(\overline{\s_C^+}\right)\right),e_C,f_{\s_C^+}^\vee\right)=\sgn(\eta_C)I^\H(r_C(D))$$
		where $\eta_C:C\stackrel{\simeq}{\rightarrow}\overline{\s_C^+}$ is the bijection (\ref{conventionbijection}) given by $r_C(D)$. Hence we have the equality
		$$(H(D),b_C(D),\varphi(D))=\sgn(\s_C^+,\overline{\s_C^+})\,\sgn(\eta_C) I^\H(r_C(D)).$$
		\item Putting the two first steps together, it only remains to check that the signs are correct. This is done in the next lemma.
		\end{enumerate}				
		\end{proof}
		
		\begin{lem}\label{lemproductsigns}
		We have the equality between signs:
		$$\sgn(\overline{C},C)\,\sgn(\nu_C)\,\sgn(\s_C^+,\overline{\s_C^+})\,\sgn(\eta_C) = (-1)^{k_C(D)}.$$
		\end{lem}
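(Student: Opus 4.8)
The plan is to recognise all four signs as signatures of bijections between linearly ordered finite sets, and to repackage their product as the signature of a single permutation of $\{1,\ldots,n\}$. Let $\Phi_C\colon\c\to\s^+$ be the bijection obtained by gluing $\nu_C\colon\overline{C}\to\s_C^+$ and $\eta_C\colon C\to\overline{\s_C^+}$, and let $w_C\in S_n$ be the permutation it becomes once $\c$ and $\s^+$ are identified with $\{1,\ldots,n\}$ through the bijections (\ref{conventionbijection}) attached to $D$ itself. Since $w_C$ carries the block decomposition $\{1,\ldots,n\}=\overline{C}\sqcup C$ of the source onto the block decomposition $\{1,\ldots,n\}=\s_C^+\sqcup\overline{\s_C^+}$ of the target, with restrictions $\nu_C$ and $\eta_C$, the block-multiplicativity of the signature with respect to such a pair of decompositions yields
$$\sgn(w_C)=\sgn(\overline{C},C)\,\sgn(\nu_C)\,\sgn(\s_C^+,\overline{\s_C^+})\,\sgn(\eta_C),$$
which is exactly the left-hand side of the statement. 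Thus everything reduces to the purely combinatorial identity $\sgn(w_C)=(-1)^{k_C(D)}$.

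To establish the latter I would give an explicit geometric description of $w_C$. Under (\ref{conventionbijection}) a chord of a dissection diagram is identified with its tail vertex and a non-root side with the non-root vertex it follows in clockwise order, so $w_C$ may be read as a permutation of the non-root vertices of $\Pi_n$: a vertex $i$ surviving in a face $q_C^\alpha(D)$ (that is, $i\in\overline{C}(\alpha)$) is sent to its relabelling in the contracted polygon $\Pi(\alpha)$, which does not involve a flip, while a vertex $i$ corresponding to a chord of $C$ is sent to its relabelling in $r_C(D)$ after the corresponding chord has, if necessary, been flipped. I would then factor $w_C$ as a product of local permutations indexed by the faces of the dissection together with one coming from $r_C(D)$: inside each block the relabelling is monotone up to the cyclic rotation dictated by the position of the relevant root, and a careful count shows that the signs of these cyclic rotations are precisely the ones already carried by $\sgn(\overline{C},C)$ and $\sgn(\s_C^+,\overline{\s_C^+})$ together with the way the roots of the sub-polygons are threaded through $\Pi_n$. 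After this cancellation the only non-trivial contributions left are the transpositions forced by the chords of $\k_C(D)$ that must be flipped when passing from $D$ to $r_C(D)$, each of which contributes a factor $-1$; hence $\sgn(w_C)=(-1)^{k_C(D)}$.

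The main obstacle is precisely this bookkeeping: one must track how the vertex and side labels are shuffled by the two contraction processes (contraction of the chords of $C$ for $q_C$, contraction of the sides of $\s_C^+$ for $r_C$), and verify that the cyclic rotations introduced by the choices of roots in the sub-polygons conspire with the interleaving signs to leave only the flip contributions. A convenient way to organise the verification is by induction on $|C|$: the case $|C|=0$ is trivial since then every sign is $+1$ and $k_C(D)=0$, and in the inductive step one removes a single chord $c_0\in C$, exploits the compatibility of the operations $q_{\bullet}$ and $r_{\bullet}$ under composition, and uses part $4$ of Lemma \ref{lemhopfalgebra} to control the flip count, checking that the auxiliary signs introduced at each stage cancel in pairs. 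Either way the computation is elementary but somewhat lengthy; the details are carried out in \ref{appC}.
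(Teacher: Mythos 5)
Your first paragraph is correct and coincides exactly with the paper's opening move: gluing $\nu_C$ and $\eta_C$ into a single permutation $w_C$ (the paper's $\sigma_C$) and using block-multiplicativity of the signature to identify the left-hand side with $\sgn(w_C)$, reducing the lemma to the identity $\sgn(w_C)=(-1)^{k_C(D)}$. But this reduction is the easy part; that identity is the entire content of the lemma, and there your argument stops being a proof. The assertion that \enquote{the only non-trivial contributions left are the transpositions forced by the chords of $\k_C(D)$, each of which contributes a factor $-1$} is precisely what must be established, and the sketch offered for it is both unsubstantiated and partly incoherent: you propose to cancel the \enquote{cyclic rotations} of the blocks against $\sgn(\overline{C},C)$ and $\sgn(\s_C^+,\overline{\s_C^+})$, but those two signs were already consumed in your first paragraph to form $\sgn(w_C)$, so they are not available to cancel anything in a direct computation of $\sgn(w_C)$; and the closing appeal to \enquote{the details are carried out in \ref{appC}} defers exactly the computation the lemma asks for.

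What is actually needed, and what the paper does, is a structural description of the flip set and an explicit computation of the two bijections. Using Lemmas \ref{lemcns1} and \ref{lemcns2} of \ref{appA}, one shows that $\k_C(D)$ is a disjoint union of decreasing path graphs $i_0\to i_1\to\cdots\to i_r$ with $i_0\in\s_C^+$; one then checks that $\eta_C$ fixes every unflipped chord and sends $i_{k-1}\mapsto i_k$ along each such path, while $\nu_C$ agrees with the canonical bijection (\ref{conventionbijection}) everywhere except that the unflipped chord of $\overline{C}$ leaving $i_r$ is sent to $i_0$, because the vertices $i_0,\ldots,i_r$ are identified in $q_C(D)$. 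Hence $w_C$ is the product of the cycles $(i_0\,i_1\,\cdots\,i_r)$, one per maximal flipped path, and $\sgn(w_C)=\prod(-1)^r=(-1)^{k_C(D)}$. Note in particular that a flipped path of $r$ chords contributes a single $(r+1)$-cycle whose closing step comes from $\nu_C$ at an \emph{unflipped} chord, so your picture \enquote{each flipped chord is a transposition, the rest is the identity} is not literally what $w_C$ is, even though it happens to predict the right sign; without the path description and the computation of $\nu_C$ and $\eta_C$ there is no proof. The alternative induction on $|C|$ you mention faces the same obstacle: Lemma \ref{lemhopfalgebra}, part 4, controls the flip count, but the multiplicativity of the four-sign product under the composition $C\subset C'$ is not established and is essentially equivalent to the statement being proved.
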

		
		\begin{proof}
		See \ref{appC}.
		\end{proof}
		
		\begin{rem}
		If we work in $\P$ with the elements $I^\P(D)$ (see Remark \ref{remP}) then we get a similar formula for the coaction $\rho:\P\rightarrow \H\otimes \P$:
		$$\rho(I^\P(D))=\sum_{C\subset \c(D)}(-1)^{k_C(D)}I^\H(q_C(D))\otimes I^\P(r_C(D)).$$
		We only have to define the integration simplices for the elements $I^\P(r_C(D))$ in a coherent way. If $\Delta_D$ is the integration simplex for $I^\P(D)$, then the integration simplex for $I^\P(r_C(D))$ has to be the face $\partial_{\s_C^+}\Delta_D$ of $\Delta_D$. This is because the morphism (\ref{morphismface}) corresponds, on the singular homology groups, to a composition of face maps.
		\end{rem}
		
	\subsection{The setting of mixed Tate motives}	\label{parMTM}
		 
		 Let $F$ be a number field, and $\mathrm{MTM}(F)$ be the category of mixed Tate motives over $F$ \cite{levinetatemotives}. Then we have a graded fiber functor
		 $$\omega(H)=\bigoplus_k\mathrm{Hom}_{\mathrm{MTM}(F)}(\Q(-k),\gr^W_{2k}H)$$
		 so that $\mathrm{MTM}(F)$ is equivalent to the category of finite-dimensional graded comodules on a graded Hopf algebra $\H_{\mathrm{MTM}(F)}$, which is also positively graded and connected:
		$$\mathrm{MTM}(F)\cong \mathrm{grComod}(\H_{\mathrm{MTM(F)}}).$$		 
		 One may also describe an element in $\H_{\mathrm{MTM}(F)}$ of degree $n$ as equivalence classes of triples $(H,v,\varphi)$ with 
		 \begin{itemize}
		 \item $H$ a mixed Tate motive over $F$, 
		 \item $v\in \mathrm{Hom}_{\mathrm{MTM}(F)}(\Q(-n),\gr^W_{2n}H)$, 
		 \item $\varphi\in\mathrm{Hom}_{\mathrm{MTM}(F)}(\gr^W_0H,\Q(0))$.
		 \end{itemize}
		 The coproduct in $\H_{\mathrm{MTM}(F)}$ is given by the abstract formula (\ref{defabstractcoproduct}).
		 
		 If $\sigma:F\hookrightarrow\C$ is an embedding of $F$ into the complex numbers, then there is a Hodge realization functor \cite{huberrealization,huberrealizationcorrigendum}
		 $$real_\sigma:\mathrm{MTM}(F)\rightarrow\mathrm{MHTS}$$
		 which gives a morphism of Hopf algebras
		 \begin{equation}\label{realsigma}
		 real_\sigma:\H_{\mathrm{MTM}(F)}\rightarrow\H.
		 \end{equation}
		 
		
		 
		 
		 
		If we now start with a generic $F$-decorated dissection diagram $D$, then we may define a bi-arrangement $(L;M)$ inside $\mathbb{A}^n_F$, the $n$-dimensional affine space over $F$. Then $H^n(\mathbb{A}^n_F\setminus L,M\setminus M\cap L)$ defines an object in the category $\mathrm{MTM}(F)$ (see \cite[Proposition 3.6]{goncharovperiodsmm}). We may then copy Definition \ref{defmotivicdissectionpolylog} to define elements $I^{\H_{\mathrm{MTM}(F)}}(D)\in \H_{\mathrm{MTM}(F)}$ inside the Hopf algebra of mixed Tate motives over $F$.\\
		The realization morphism (\ref{realsigma}) maps $I^{\H_{\mathrm{MTM}(F)}}(D)\in \H_{\mathrm{MTM}(F)}$ to $I^\H(\sigma(D))\in \H$ where $\sigma(D)$ is the $\C$-decorated diagram obtained from $D$ by applying $\sigma$ to all the decorations.\\
		
		Formula (\ref{maintheoremformula}) for the coproduct of the motivic dissection polylogarithms is also valid in the setting of mixed Tate motives for the elements $I^{\H_{\mathrm{MTM}(F)}}(D)$, and gives a morphism of graded Hopf algebras 
		$$\D^{gen}(F)\rightarrow \H_{\mathrm{MTM}(F)}.$$
	
	\subsection{Examples of computations}
	
		We present two special cases of Theorem \ref{maintheorem}.\\
		
		In \ref{exdissectionpolylogs} we have introduced the iterated integrals $\I(a_0;a_1,\ldots,a_n;a_{n+1})$ as the dissection polylogarithms corresponding to corollas. The motivic counterparts $\I^\H(a_0;a_1,\ldots,a_n;a_{n+1})\in \H_n$ have already been defined and studied by Goncharov in \cite[Theorem 1.1]{goncharovgaloissym}, in the framework of motivic fundamental groupoids. We leave it to the reader to check that Goncharov's definition agrees with ours. The coproduct of the motivic iterated integrals has been worked out by Goncharov.

		\begin{thm}[See \cite{goncharovgaloissym}, Theorem 1.2.]\label{coproditint}
		The coproduct of motivic generic iterated integrals is given by the formula
		\begin{eqnarray*}
		\Delta(\mathbb{I}^{\mathcal{H}}(a_0;a_1,\ldots, a_n;a_{n+1})) & = & \sum_{\substack{0\leq k\leq n\\0=i_0<i_1<\cdots<i_k<i_{k+1}=n+1}}  \\
		\prod_{s=0}^k \mathbb{I}^{\mathcal{H}}(a_{i_s};a_{i_s+1},\ldots, a_{i_{s+1}-1};a_{i_{s+1}}) & \otimes & \mathbb{I}^{\mathcal{H}}(a_0;a_{i_1},\ldots, a_{i_k};a_{n+1}).
		\end{eqnarray*}
		\end{thm}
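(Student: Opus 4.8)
The plan is to obtain this formula as the specialization of Theorem \ref{maintheorem} to corollas. The first step is to record the identification flagged in \S\ref{exdissectionpolylogs}: $\I^\H(a_0;a_1,\ldots,a_n;a_{n+1})$ is the motivic dissection polylogarithm $I^\H(D)$ attached to the decorated corolla $D=X_n$ of degree $n$, in the normalization where the $i$-th chord carries $a_i$, the root side carries $-a_0$, the side labeled $n$ carries $a_{n+1}$, and all the remaining sides carry $0$; in this normalization the genericity of $D$ is exactly the condition $a_i\neq a_j$ for $0\leq i\neq j\leq n+1$. That this motivic incarnation agrees with Goncharov's (defined via motivic fundamental groupoids in \cite[Theorem 1.1]{goncharovgaloissym}) is the one ingredient I would take for granted, as announced in the remark preceding the statement.

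With this in hand I would simply apply formula (\ref{maintheoremformula}) to $D=X_n$. By Example \ref{excoproduct}(1), a subset $C=\{j_1<\cdots<j_k\}\subset\c(X_n)\simeq\{1,\ldots,n\}$ gives $\s_C^+=\overline{C}$, $r_C(X_n)=X_k$, and $q_C(X_n)=\prod_{s=0}^k X_{j_{s+1}-j_s-1}$ (writing $j_0=0$ and $j_{k+1}=n+1$), and moreover $\k_C(X_n)=\varnothing$, so that every sign $(-1)^{k_C(D)}$ in (\ref{maintheoremformula}) equals $+1$ --- matching the absence of signs on the right-hand side of Goncharov's formula. Summing over all $C$ of cardinality $k$ and then over $k$ is the same as summing over all tuples $0=i_0<i_1<\cdots<i_k<i_{k+1}=n+1$ appearing in the statement, upon setting $i_s=j_s$.

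It then remains to compute the decorations and match the terms, using the contraction rule of \S\ref{conventions}. For $r_C(D)$ one keeps the chords $j_1,\ldots,j_k$ (decorations $a_{j_1},\ldots,a_{j_k}$) and contracts the sides in $\overline{C}$; since each such side carries $0$, successive contractions merely migrate the decoration $a_{n+1}$ of the last side onto the side labeled $j_k$ and leave the root side at $-a_0$, so $r_C(D)$ is the corolla attached to $\I^\H(a_0;a_{i_1},\ldots,a_{i_k};a_{n+1})$. For the component $q_C^{\alpha}(D)$ of $q_C(D)$ supported on the region of $C\cup\s$ lying between the chords $j_s$ and $j_{s+1}$: contracting $C$ collapses the vertices $j_1,\ldots,j_k$ to the root, so this component is the corolla on the chords $j_s+1,\ldots,j_{s+1}-1$; applying the contraction rule to the chord $j_s$ (carrying $a_{j_s}$) puts $-a_{j_s}$ on its root side, and to the chord $j_{s+1}$ (carrying $a_{j_{s+1}}$) puts $a_{j_{s+1}}$ on its last side, so that $q_C^{\alpha}(D)$ is the corolla attached to $\I^\H(a_{i_s};a_{i_s+1},\ldots,a_{i_{s+1}-1};a_{i_{s+1}})$ (for $s=0$ the root side retains $-a_0$, giving $\I^\H(a_0;a_1,\ldots,a_{i_1-1};a_{i_1})$; a component of degree $0$ is the unit of $\H$, i.e. the empty iterated integral $\I^\H(a_{i_s};;a_{i_{s+1}})=1$). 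Reassembling $q_C(D)=\prod_\alpha q_C^\alpha(D)$ and comparing with (\ref{maintheoremformula}) yields the asserted identity. Along the way I would note that each $q_C^\alpha(D)$ and $r_C(D)$ is generic because its genericity involves distinctness only among a sub-multiset of $\{a_0,\ldots,a_{n+1}\}$, so every term is a legitimate element of $\H$.

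I expect the main obstacle to be entirely bookkeeping: tracking the two non-trivial side decorations $-a_0$ and $a_{n+1}$ through the sequences of edge-contractions, and handling the reindexing bijections (\ref{conventionbijection}) of chords and sides without introducing spurious signs. On the sign point one can either argue directly as above or simply invoke Lemma \ref{lemproductsigns}, whose right-hand side $(-1)^{k_C(D)}$ collapses to $1$ here since $\k_C(X_n)=\varnothing$; everything else is a direct substitution into Theorem \ref{maintheorem}.
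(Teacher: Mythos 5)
Your proposal is correct and follows exactly the paper's route: the paper's proof is precisely the specialization of Theorem \ref{maintheorem} to decorated corollas via the computation of Example \ref{excoproduct}(1) (so $\s_C^+=\overline{C}$, $q_C(X_n)=\prod_s X_{i_{s+1}-i_s-1}$, $r_C(X_n)=X_k$, $k_C=0$), with the term indexed by $0=i_0<i_1<\cdots<i_k<i_{k+1}=n+1$ corresponding to $C=\{i_1,\ldots,i_k\}$, and with the identification with Goncharov's definition likewise taken as given. Your tracking of the decorations through the contraction rules of \S\ref{conventions} just makes explicit the bookkeeping the paper summarizes as ``taking care of the decorations.''
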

		
		\begin{proof}
		It is the same computation as in Example \ref{excoproduct}, $1.$, but taking care of the decorations. The term indexed by $0=i_0<i_1<\cdots<i_k<i_{k+1}=n+1$ corresponds to the subset $C=\{i_1,\ldots,i_k\}$.
		\end{proof}
		
		In \ref{exdissectionpolylogs} we have introduced the $\mathbb{J}$-polylogarithms $\mathbb{J}(b;a_1,\ldots,a_n)$ as the dissection polylogarithms corresponding to path trees. We let  $\mathbb{J}^\H(b;a_1,\ldots,a_n)\in \H_n$ be their motivic counterparts. Their coproduct is given by a simple formula. 

		\begin{thm}
		The coproduct of motivic generic $\mathbb{J}$-polylogarithms is given by the formula 
		\begin{equation}
		\Delta(\mathbb{J}^\H(a_1,\ldots,a_n;b))=\sum_{I\subset\{1,\ldots,n\}}\mathbb{J}^\H(a(\overline{I});b-a_I)\otimes \mathbb{J}^\H(a(I);b).
		\end{equation}
		\end{thm}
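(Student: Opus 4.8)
The plan is to deduce this from Theorem~\ref{maintheorem} applied to the decorated path tree $D=Y_n$, in the same way that Theorem~\ref{coproditint} was deduced from Example~\ref{excoproduct}.1; the only work is the bookkeeping of decorations. I would start by recalling the purely combinatorial facts of Example~\ref{excoproduct}.2, which depend only on the underlying diagram: for every $C\subseteq\c(D)\simeq\{1,\ldots,n\}$ one has $\s_C^+=\overline{C}$, both $q_C(D)$ and $r_C(D)$ are again path trees, of degrees $n-|C|$ and $|C|$ (the other faces of the cactus defining $q_C(D)$ are exactly the degenerate bigons cut off by the chords of $C$, hence of degree $0$), and $\k_C(D)=\varnothing$, so $k_C(D)=0$ and formula~(\ref{maintheoremformula}) carries no sign. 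Writing $C=I$, it then remains only to identify the decorations of $q_I(D)$ and $r_I(D)$.

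For $r_I(D)$ this is immediate in the normalization of \S\ref{exdissectionpolylogs} (root side decorated by $-b$, every other side decorated by $0$): the sides one contracts lie in $\s_I^+\subseteq\{1,\ldots,n\}$ and hence carry the decoration $0$, and by the rule of \S\ref{conventions} contracting a $0$-decorated edge leaves every other decoration unchanged. Thus $r_I(D)$ is the path tree with root parameter $b$ and chord decorations $(a_i)_{i\in I}$ in their induced order, i.e.\ $I^\H(r_I(D))=\mathbb{J}^\H(a(I);b)$.

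For $q_I(D)$ I would argue via cycle decorations. A decorated path tree is determined up to a translation of the ambient coordinates by the total decorations of the simple cycles of its total directed graph, and such translations leave $I^\H$ invariant — this is relation $(R1)$ of Proposition~\ref{r1}, which holds for the motivic periods (\S\ref{sectionmotivicperiods}). The relevant cycles of $\Gamma(Y_n)$ are the $n$ bigons (chord $i$ together with side $i$), with total decorations $a_1,\ldots,a_n$, and the cycle $\gamma_0$ formed by the root side followed by all the chords, with total decoration $-b+a_1+\cdots+a_n$. A short verification from the contraction rule of \S\ref{conventions} shows that contracting an edge preserves the total decoration of every cycle not containing it, and replaces a cycle through it by a cycle of the same total decoration. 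Hence in $q_I(D)$ the surviving bigons, indexed by $i\in\overline{I}$, still have total decorations $a_i$, and $\gamma_0$ still has total decoration $-b+a_1+\cdots+a_n$. Writing $q_I(D)$ in normalized form as $\mathbb{J}(b';a'_1,\ldots,a'_{n-|I|})$, the bigons force $a'_j=a_i$ for the matching $i\in\overline{I}$ (in the same order, since no chord is flipped and the path structure is untouched), and then $\gamma_0$ forces $-b'+\sum_j a'_j=-b+\sum_{i=1}^n a_i$, that is, $b'=b-a_I$. Therefore $I^\H(q_I(D))=\mathbb{J}^\H(a(\overline{I});b-a_I)$, and substituting the two identifications into~(\ref{maintheoremformula}) yields the stated formula.

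I expect the only genuinely delicate point to be the value of the parameter of $q_I(D)$: it is $b-a_I$, not $b$ minus the sum over the ``initial run'' of $I$ that one reads off naively by iterating the substitutions $t_i\mapsto t_{i+1}+a_i$. Recovering the full sum $a_I$ is precisely what forces one to re-normalize the contracted diagram back to the shape of \S\ref{exdissectionpolylogs} using $(R1)$, and the cycle-decoration argument above is the clean way to package that renormalization; everything else (the combinatorial skeleton, the absence of signs, and the decoration of $r_I(D)$) is routine.
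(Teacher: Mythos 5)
Your proposal is correct and follows essentially the same route as the paper: apply Theorem~\ref{maintheorem} to the decorated path tree, use the combinatorial facts of Example~\ref{excoproduct}.2 (so $\s_C^+=\overline{C}$, no signs, and both factors are path trees), and then renormalize the left-hand factor by a translation $(R1)$ to read off the parameter $b-a_I$ --- exactly the ``slight translation of variables'' the paper invokes and leaves to the reader. Your cycle-decoration bookkeeping (bigon totals $a_i$ and the total $-b+a_1+\cdots+a_n$ of the outer cycle being preserved under contraction) is a clean and correct way to carry out that omitted verification, including the pitfall you flag about not reading the new root parameter naively off the root side alone.
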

		
		\begin{proof}
		It is the same computation as in Example \ref{excoproduct}, $2.$, but taking care of the decorations. Here we have to make a slight translation of variables on the left-hand side of the tensor product so that it looks like the above formula. The details are left to the reader.
		\end{proof}
		
	\subsection{Genericity and regularization}
	
		In this paragraph we discuss the extension of our results to non-generic dissection diagrams/polylogarithms. The genericity condition on the decorations of a dissection diagram is a sufficient, but not necessary condition, for the existence of the corresponding dissection polylogarithm.\\
		
		Let us take the example of the iterated integrals $\I(a_0;a_1,\ldots,a_n;a_{n+1})$, for which the genericity condition reads $a_i\neq a_j$ for $i\neq j$. The convergence of the corresponding integral is actually guaranteed as soon as $a_0\neq a_1$ and $a_n\neq a_{n+1}$. For example, the multiple zeta values
		$$\zeta(n_1,\ldots,n_r)=\sum_{1\leq k_1<\cdots< k_r}\frac{1}{k_1^{n_1}\cdots k_r^{n_r}}$$
		defined for integers $n_1,\ldots,n_{r-1}\geq 1$ and $n_r\geq 2$, are special cases of these non-generic iterated integrals, as was first noticed by Kontsevich:
		$$\zeta(n_1,\ldots,n_r)=(-1)^r(2i\pi)^n\,\,\mathbb{I}(0;\underbrace{1,0,\ldots,0}_{n_1},\ldots,\underbrace{1,0,\ldots,0}_{n_r};1)$$
		for $n=n_1+\cdots+n_r$.\\
		The point is that in the formula for the coproduct of motivic iterated integrals (Theorem \ref{coproditint}), there may be non-convergent motivic iterated integrals on the right-hand side even if the left-hand side corresponds to a convergent one. For example, for $\I(0;1,0;1)=-(2i\pi)^{-1}\zeta(2)$, the formula would look like
		\begin{equation}\label{coprodzeta}
		\Delta_{1,1}(\I^\H(0;1,0;1))=\I^\H(1;0;1)\otimes\I^\H(0;1;1)+\I^\H(0;1;0)\otimes\I^\H(0;0;1).
		\end{equation}
		Goncharov showed that there is a regularization procedure that gives a meaning to (possibly non-convergent) iterated integrals $\I(a_0;a_1,\ldots,a_n;a_{n+1})$ (for all tuples $(a_0,\ldots,a_{n+1})$).\\
		Furthermore, he defined their motivic versions $\I^\H(a_0;a_1,\ldots,a_n;a_{n+1})$ and proved that Theorem \ref{coproditint} was valid without the genericity hypothesis. Thus, formula (\ref{coprodzeta}) makes sense (and in this particular case, the right-hand side is $0$).\\
		
		Building upon Goncharov's construction (see also \cite[\S 4]{goncharovperiodsmm}), one should be able to regularize all dissection polylogarithms and compute the coproduct of their motivic versions. The most naive hope would be that the formula for the coproduct would remain the same, hence extending Theorem \ref{maintheorem} to a morphism of Hopf algebras $\D(\C)\rightarrow \H$.
	
\appendix
	\section{Proof of Lemma \ref{lemhopfalgebra}}\label{appA}
	
	In this appendix we fix a dissection diagram $D$ of degree $n$. We use the identifications $\c=\{1,\ldots,n\}$, $\s=\{0,\ldots,n\}$ and $\s^+=\{1,\ldots,n\}$ for the sets of chords and sides of $D$.
	
	\begin{lem}\label{lemcns1}
	Let $C\subset\c$ be a subset of chords of $D$ and $c=\stackrel{i_0}{\bullet}\longrightarrow\stackrel{i_1}{\bullet}$ be a chord in $C$.\\
	Then $c$ is in $\k_C(D)$ if and only if the three following conditions are satisfied:
	\begin{enumerate}[(K1)]
	\item The path in $C$ $$\stackrel{i_0}{\bullet}\stackrel{c}{\longrightarrow} \stackrel{i_1}{\bullet}\longrightarrow\cdots\longrightarrow\stackrel{i_{M-1}}{\bullet}\longrightarrow \stackrel{i_M}{\bullet}$$ starting at $i_0$ does not go to the root.
	\item This path is decreasing: for all $k=1,\ldots,M$ we have $i_{k-1}>i_k$.
	\item For all $k=1,\ldots,M$, there is no chord $\stackrel{j}{\bullet}\longrightarrow\stackrel{i_k}{\bullet}$ in $C$ such that $j>i_0$.
	\end{enumerate}
	In particular, we have $i_0>i_1$, so that all the chords in $\k_C(D)$ are decreasing.
	\end{lem}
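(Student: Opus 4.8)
The key first step is to reinterpret $r_C(D)$ via spanning trees. By Lemma~\ref{lemSTacyclic} (the subset $\mathscr{S}_C^+$ being of the form $\bigsqcup_\alpha(\mathscr{S}_C(\alpha)\setminus\{\min\mathscr{S}_C(\alpha)\})$ required there), the subgraph $T':=C\cup\mathscr{S}_C^+$ of the total graph $\Gamma(D)$ is a spanning tree of the $(n+1)$ vertices of $\Pi_n$. Contracting the $n-|C|$ side-edges $\mathscr{S}_C^+$ inside $T'$ produces exactly the underlying tree of $r_C(D)$; if we root $T'$ at the root $\rho$ of $\Pi_n$, then, since $0\notin\mathscr{S}_C^+$, the resulting root is the image of $\rho$, which is the root of $r_C(D)$. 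As contracting an edge of a rooted tree does not change the directions towards the root of the remaining edges, a chord $c$ of $D$, directed $i_0\to i_1$ in $D$, lies in $\mathscr{K}_C(D)$ if and only if its direction towards $\rho$ inside $T'$ is $i_1\to i_0$, i.e. if and only if $\rho$ lies in the connected component of $i_0$ in the forest $T'\setminus\{c\}=(C\setminus\{c\})\cup\mathscr{S}_C^+$.

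Everything then becomes an analysis of that forest. First, (K1) is necessary: if the path $Q\colon i_0\to i_1\to\cdots\to i_M$ in $C$ (whose first edge is $c$) reaches the root, i.e. $i_M=\rho$, then $i_1\to\cdots\to i_M=\rho$ is a path in $C\setminus\{c\}$, so $\rho$ lies in the component of $i_1$ and $c\notin\mathscr{K}_C(D)$. So I may assume (K1) holds; by maximality of $Q$, the chord leaving $i_M$ towards $\rho$ in $D$ is not in $C$.

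Under (K1) I would show that $c\in\mathscr{K}_C(D)$ if and only if (K2) and (K3) both hold, by following the unique $T'$-path from $i_0$ to $\rho$ and deciding whether it begins with $c$. Since chords do not cross, deleting $c$ disconnects the disk, so any path from $i_0$ to $\rho$ avoiding $c$ must run backwards along the chords $i_k\to i_{k-1}$ of $Q$ and then escape through the sides of $\Pi_n$; one follows it face by face, reading off from $\mathscr{S}_C^+=\bigsqcup_\alpha(\mathscr{S}_C(\alpha)\setminus\{\min\mathscr{S}_C(\alpha)\})$ which side of each face survives the contraction. The condition (K2) (strict decrease along $Q$) is exactly what forces the surviving minimal sides of the faces bordering $Q$ on the $i_0$-side to link round to the root side $0$, so that $\rho$ is pulled into the component of $i_0$; the case $|C|=1$ is a good prototype, where $Q=(i_0,i_1)$, (K3) is vacuous, and a direct computation shows $c$ is flipped precisely when $0<i_1<i_0$. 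The condition (K3) (no chord of $C$ enters some $i_k$ from a vertex $j>i_0$) rules out a chord $\{j,i_k\}\in C$ which would reattach $i_k$, hence all of $i_1,\dots,i_M$, to the vertex $v_j$ lying, together with $\rho$, on the far side of $c$; such a chord would keep $\rho$ in the component of $i_1$. Conversely, if (K2) or (K3) fails one writes down an explicit path from $i_1$ to $\rho$ in $(C\setminus\{c\})\cup\mathscr{S}_C^+$, showing $c\notin\mathscr{K}_C(D)$.

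The \enquote{in particular} statement is then immediate, since the first edge of $Q$ is $c$, so the $k=1$ instance of (K2) reads $i_0>i_1$. The main obstacle is the middle step: matching the purely combinatorial conditions (K2) and (K3) precisely to the behaviour of the root vertex under the contraction of $\mathscr{S}_C^+$. The delicate points are identifying, for each face crossed by $Q$, which of its sides carries the smallest label and therefore survives, and handling degenerate configurations (chords of $C$ joining consecutive vertices, the face containing side $0$, the case where $i_M$ is adjacent to $\rho$) in which some faces shrink to bigons.
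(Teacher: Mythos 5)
Your opening reduction is correct, and it is a genuinely different starting point from the paper's argument: since $C\cup\s_C^+$ is acyclic by Lemma \ref{lemSTacyclic} and has $n$ edges, it is a spanning tree $T'$ of the $(n+1)$ vertices; as $0\notin\s_C^+$, the root of $r_C(D)$ is the class of the root $\rho$ of $\Pi_n$, and contracting edges of a rooted tree preserves the orientation towards the root of the remaining edges, so indeed $c\in\k_C(D)$ if and only if $\rho$ lies in the $i_0$-component of $T'\setminus\{c\}$. (The paper instead contracts the sides of $\s_C^+$ one at a time, in decreasing order, and carries an inductive invariant through the intermediate diagrams $D^{(i)}$.) Your deduction of the necessity of (K1) from this criterion is fine.

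However, the heart of the lemma is the equivalence, under (K1), of this tree criterion with the conjunction of (K2) and (K3), and you do not prove it: the passage about surviving minimal sides linking round to the root side, and the converse step (\emph{one writes down an explicit path}), are statements of intent, and you yourself flag this as the main obstacle. Moreover, the structural picture on which your face-by-face plan rests is not accurate: it is not true that a path from $i_0$ to $\rho$ in $T'\setminus\{c\}$ must run backwards along the chords of $Q$ and then escape through sides of $\Pi_n$. In the paper's Example \ref{exampleappendix} ($n=11$, $C=\{1,3,4,5,6,8,9,10\}$, $\s_C^+=\{6,8,11\}$), take the flipped chord $c$ from $6$ to $4$: the relevant path passes through the vertices $6,7,8,9,1,\rho$, using the side $6$, the chord $8\to 7$, the side $8$, the chord $9\to 1$ and the chord $1\to\rho$; it leaves $i_0$ immediately through a side and then travels through chords of $C$ that are not on $Q$ at all. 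So the analysis has to be organized differently, and in both directions one must verify that the sides one uses are genuinely in $\s_C^+$, i.e.\ are not the minimal sides of their faces; this is exactly where (K2), (K3) and the noncrossing of the chords enter, and it is the bookkeeping that the paper carries out by its induction (equivalently, via the path criterion of Lemma \ref{lemcns2}). As it stands, the proposal establishes a correct and appealing reformulation of membership in $\k_C(D)$, but not the lemma itself.
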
	
	
	\begin{ex}\label{exampleappendix}
	In the following example, we have only drawn the chords from $C=\{1,3,4,5,6,8,9,10\}$, and drawn the circle with dots for a matter of comfort.
	\begin{center}\exappendixA{2}\end{center}
	We have $\s_C(D)=\{6,8,11\}$ and $\k_C(D)$ is made of the arrows $\,\,\stackrel{6}{\bullet}\longrightarrow\stackrel{4}{\bullet}\longrightarrow\stackrel{3}{\bullet}\longrightarrow\stackrel{2}{\bullet}\,\,$ and $\,\,\stackrel{8}{\bullet}\longrightarrow\stackrel{7}{\bullet}$.
	\end{ex}
	
	\begin{proof}
	For $c$ a chord in $C$, we denote by $(K)$ the conjunction of the three conditions $(K1)$, $(K2)$, $(K3)$ of the lemma.\\
	We investigate the process of contracting the edges from $\s^+_C$ decomposing it into steps where we contract only one edge. The number of steps is $r=n-|C|$. We label $e_1,\ldots,e_r$ the elements of $\s^+_C$, in decreasing order.\\
	Let $D^{(0)}$ be the diagram obtained from $D$ by forgetting the chords from $\overline{C}$ and only keeping the chords from $C$. The chords form a disjoint union of rooted trees.\\
	We define recursively diagrams $D^{(i)}$, $i=1,\ldots,r$.
	For $i=1,\ldots,r$, let $D^{(i)}$ be the diagram obtained from $D^{(i-1)}$ by contracting the side $e_i$, and possibly flipping chords so that the chords in $D^{(i)}$ still form a disjoint union of rooted trees. The number of connected components of this disjoint union decreases with $i$, and in the end we get a dissection diagram $D^{(r)}=r_C(D)$.\\
	We prove the following property by induction on $i=0,\ldots,r$:
	\begin{enumerate}[(i)]
	\item In the diagram $D^{(i)}$, among the chords that are attached to the root, the ones that have been flipped have only been flipped once, and they are exactly the ones that satisfy condition $(K)$.
	\item For a chord that is not attached to the root, it satisfies $(K)$ in $D$ if and only if it satisfies $(K)$ in $D^{(i)}$.
	\end{enumerate}
	The case $i=0$ is trivial, and the case $i=r$ will give the lemma. Hence we only need to pass from $(i-1)$ to $i$.\\
	Let us consider the diagram $D^{(i-1)}$ and let $m$ be the starting vertex of the side $e_i$. We assume that the end vertex of $e_i$ is the root of $D^{(i-1)}$, leaving to the reader the (very similar) case where it is another non-root vertex $(m+1)$.
	Let us denote
	$$\stackrel{m=m_0}{\bullet}\stackrel{}{\longrightarrow} \stackrel{m_1}{\bullet}\longrightarrow\cdots\longrightarrow\stackrel{m_{N-1}}{\bullet}\longrightarrow \stackrel{m_N}{\bullet}$$
	the (possibly empty) path in $C$ starting at $m$. 
	\begin{center}\proofappendixA{1.7}\end{center}	
	When we contract $e_i$, $m$ is merged with the root and then we have to flip all these arrows. It is easy to see that they are the only ones. Hence we have to prove two things: these chords satisfy $(K)$, and all the other chords in their connected component in $D^{(i-1)}$ do not satisfy $(K)$.\\
	It is trivial that $m_0>m_1$ since $m_0$ is maximal in $D^{(i-1)}$. Since the chords cannot intersect each other, one easily proves by induction on $k$ that $m_{k-1}>m_k$ for all $k$. If the path in $C$ starting at $m$ goes to the root, then we cannot have $e_i\in \s_C^+$, which is a contradiction. Condition $(K3)$ cannot happen because $m$ is maximal in $D^{(i-1)}$. Hence we have proved that all the chords $\stackrel{m_{k-1}}{\bullet}\stackrel{}{\longrightarrow} \stackrel{m_k}{\bullet}$ satisfy $(K)$.\\
	Now let $c\in C$ be another chord in the same connected component of $D^{(i-1)}$ that satisfies $(K)$. Then $c$ lies between $m_{k}$ and $m_{k-1}$ for some $k$, or between the root and $m_{N}$. Let us suppose that we are in the first case; since the path starting with $c$ is decreasing, it has to go through $m_k$ because the chords cannot intersect each other. But then the chord $\stackrel{m_{k-1}}{\bullet}\stackrel{}{\longrightarrow} \stackrel{m_k}{\bullet}$ shows that condition $(K3)$  is not satisfied by $c$. In the second case, one sees that the path starting at $c$ has to end at the root, which is also a contradiction. Thus we are done with $(i)$. Statement $(ii)$ is straightforward since we have not affected the other connected components of $D^{(i-1)}$. This ends the induction.
	\end{proof}
	
	\begin{lem}\label{lemcns2}
	Let $C\subset\c$ be a subset of chords of $D$ and $c=\stackrel{i_0}{\bullet}\longrightarrow\stackrel{i_1}{\bullet}$ be a chord in $C$.\\
	Then $c$ is in $\k_C(D)$ if and only if there exists a path
	$$\stackrel{i_{-N}}{\bullet} \longrightarrow\cdots\longrightarrow \stackrel{i_{-1}}{\bullet}\stackrel{}{\longrightarrow}\stackrel{i_0}{\bullet}\stackrel{c}{\longrightarrow} \stackrel{i_1}{\bullet}$$
	of chords in $C$ such that $i_{-N}\in \s_C^+$.
	\end{lem}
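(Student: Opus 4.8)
The plan is to read Lemma~\ref{lemcns2} off the contraction process that proves Lemma~\ref{lemcns1}. Recall from that proof (Appendix~\ref{appA}) that $\k_C(D)$ is computed by contracting the sides of $\s_C^+$ one at a time, in decreasing order $e_1>e_2>\cdots>e_r$, yielding diagrams $D^{(0)},\ldots,D^{(r)}=r_C(D)$: at step $i$ one contracts the side $e_i$, whose starting vertex is the vertex $e_i$, and one flips exactly the chords of $C$ that lie on the forward path $e_i=m_0\longrightarrow m_1\longrightarrow\cdots\longrightarrow m_N$ in the current diagram $D^{(i-1)}$. By definition a chord belongs to $\k_C(D)$ iff it gets flipped during this process.

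The crux I would isolate is a \emph{no-interference} statement: for each $i$, the forward path from the vertex $e_i$ in $D^{(i-1)}$ contains no chord that was already flipped at an earlier step, so that (as an undirected path, and then with its orientations) it coincides with the forward path of $C$ starting at $e_i$ computed inside $D$ itself. Heuristically this holds because an already-flipped chord lying on that path would be flipped a second time, which the analysis in the proof of Lemma~\ref{lemcns1} excludes; a careful proof folds this into the induction there, keeping track that at every stage the chords not yet flipped still carry their $D$-orientation. In particular every chord is flipped at most once, so $\k_C(D)$ is exactly the set of chords flipped at some step.

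Granting no-interference, both implications are formal. If the chord $c$ from $i_0$ to $i_1$ lies in $\k_C(D)$, it is flipped at some step $i$, hence lies on the forward $C$-path from $e_i$ in $D$; writing that path as $e_i\longrightarrow\cdots\longrightarrow i_0\stackrel{c}{\longrightarrow}i_1\longrightarrow\cdots$ and truncating at $i_1$ produces a path $i_{-N}\longrightarrow\cdots\longrightarrow i_0\stackrel{c}{\longrightarrow}i_1$ of chords of $C$ with $i_{-N}=e_i\in\s_C^+$. Conversely, given such a path with $i_{-N}\in\s_C^+$, let $e_i=i_{-N}$ be the corresponding side; since the chords of $C$ form a forest, $i_{-N}$ has a unique forward $C$-path, which must therefore begin with $i_{-N}\longrightarrow\cdots\longrightarrow i_0\stackrel{c}{\longrightarrow}i_1$, and by no-interference this is also its forward path in $D^{(i-1)}$. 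Hence $c$ is among the chords flipped at step $i$, so $c\in\k_C(D)$.

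The main obstacle is precisely the no-interference claim: the earlier contractions both flip chords and merge the tree components of $C$, and one must be certain this never corrupts the forward-path data governing the later steps. (One could instead try to compare the characterizations of Lemmas~\ref{lemcns1} and \ref{lemcns2} directly, but deriving the ``forward'' conditions (K1)--(K3) from the existence of a ``backward'' path seems less direct than the process-based argument above.)
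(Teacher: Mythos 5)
Your proposal concentrates all of the content of the lemma into the ``no-interference'' claim and then does not prove it; this is a genuine gap, and the heuristic you offer does not even address the dangerous failure mode. You justify no-interference by saying that an already-flipped chord on the forward path would be flipped twice, ``which the analysis in the proof of Lemma~\ref{lemcns1} excludes.'' But (i) the ``flipped only once'' statement is itself part of the inductive claim being established there, so ``folding a careful proof into that induction'' is deferring the work rather than doing it, and the relevant case --- contraction of a side whose endpoint is \emph{not} the root, where the merged component gets re-rooted --- is exactly the case that proof leaves to the reader; and (ii) absence of previously flipped chords on the path does not give what you actually need, namely that the forward path from the vertex $e_i$ in $D^{(i-1)}$ \emph{coincides} with the forward path in $D$. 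After earlier contractions, the component of $e_i$ may have been merged with other components and re-rooted, so that its forward path in $D^{(i-1)}$ could a priori continue past the original sink through chords of another component (flipped or not); if that happened, step $i$ would flip chords that lie on no original forward path from an $\s_C^+$ vertex, and your ``so that it coincides with the forward path of $C$ starting at $e_i$ computed inside $D$ itself'' is a non sequitur. Ruling this out is a real combinatorial fact: one needs, e.g., that each connected component of the chord-forest $C$ contains at most one vertex whose outgoing side lies in $\s_C^+$, necessarily its maximal vertex (a planarity argument: for $m<u$ in the same component, the path of chords from $m$ to $u$ confines the face of side $m$ to sides labelled in $\{m,\ldots,u-1\}$, so side $m$ is minimal in its face), together with a bookkeeping of how components merge and where the sink goes at each step. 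With such a statement your process-based argument does close, but none of it is in the proposal, and it is comparable in length and delicacy to the paper's own treatment.

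For comparison, the paper proves the lemma by a different and more direct route, the one you set aside: it shows the backward-path condition is equivalent to the characterization (K1)--(K3) of Lemma~\ref{lemcns1}, constructing the backward path by repeatedly choosing the highest incoming chord and using the polygon faces $\wt{\Pi}(\alpha)$ of the dissection defined by $C$ to see that the terminal vertex lies in $\s_C^+$ (and conversely). That argument never revisits the contraction process, so it avoids precisely the dynamical issues your approach must confront.
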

	
	\begin{proof}
	We prove the equivalence with the condition $(K)$ of Lemma \ref{lemcns1}.\\
	If $c$ satisfies $(K)$, then we define $i_{-1}$ to be the highest vertex $>i_0$ such that there exists a chord $\stackrel{i_{-1}}{\bullet}\stackrel{}{\longrightarrow} \stackrel{i_0}{\bullet}$ in $C$, and so on. The process stops at a vertex $i_{-N}$ and we want to prove that $i_{-N}\in \s_C^+$. By construction and by condition $(K3)$, the chords $\stackrel{i_{k-1}}{\bullet}\stackrel{}{\longrightarrow} \stackrel{i_k}{\bullet}$, for $k=-N+1,\ldots,M$, are sides of the same polygon $\wt{\Pi}(\alpha)$ in the dissection defined by $C$, as well as the side labeled $i_{-N}$. 
	\begin{center}\proofbisappendixA{1.7}\end{center}	
	Because of conditions $(K1)$ and $(K2)$, there is a side of this $\wt{\Pi}(\alpha)$ that is a side of $\Pi_n$  and that is less than $i_M$. Hence by definition $i_{-N}\in \s_C^+$.\\
	Conversely, under the assumption of the lemma, one easily sees that if any of conditions $(K1)$, $(K2)$, $(K3)$ is satisfied, then $i_{-N}\notin \s_C^+$.
	\end{proof}
	
	For the remainder of this appendix we use the unambiguous notation $\s_C^+=\s_C^+(D)$ to avoid any confusion.
	
	\begin{lem}\label{techlem2}
	Let $C\subset \c(D)$ be a subset of chords of $D$ and $\overline{C}=\bigsqcup_\alpha\overline{C}(\alpha)$ the partition (\ref{partition}) of $\overline{C}$ determined by $C$. Let us fix $C'_\alpha\subset \overline{C}(\alpha)$ for each $\alpha$ and $C'=C\sqcup\bigsqcup_\alpha C'_\alpha$.
	\begin{enumerate}
	\item $\s_C^+(D)=\bigsqcup_\alpha \s_{C'_\alpha}^+(q_C^\alpha(D))$.
	\item $\s_C^+(D)=\s_{C'}^+(D)\sqcup \s_C^+(r_{C'}(D))$.
	\item $\k_{C'}(D)\sqcup \k_C(r_{C'}(D))=\k_C(D)\sqcup\bigsqcup_\alpha \k_{C'_\alpha}(q_C^\alpha(D))$.
	\end{enumerate}
	\end{lem}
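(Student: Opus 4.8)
The three identities are a matter of reconciling the combinatorial data attached to the nested subsets $C\subseteq C'$ of chords of $D$. The plan is to prove the two identities concerning the sets $\s^+$ first --- these involve only the partitions of $\s$ into faces --- and then to bootstrap to the identity concerning the sets $\k$ by means of the path criteria of Lemmas \ref{lemcns1} and \ref{lemcns2}. The basic observation for the first two is that, since $C\subseteq C'$, the planar dissection of $\Pi_n$ by $C'\cup\s$ refines the one by $C\cup\s$: every face $\beta$ of $C'\cup\s$ lies in a unique face $\alpha$ of $C\cup\s$, and for fixed $\alpha$ the faces $\beta\subseteq\alpha$ are exactly the faces of the dissection of $q_C^\alpha(D)$ by $C'_\alpha$ --- contracting the $C$-chords that bound $\wt{\Pi}(\alpha)$ identifies the latter polygon with $\Pi(\alpha)$ and does not change which genuine sides of $\Pi_n$ sit in a given subface. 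Matching the per-face minima, all taken for the linear order inherited from $\s$, gives the first identity.

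For the second identity I would record two set-level consequences of this refinement picture: inside a face $\alpha$, the complement $\s_C(\alpha)\setminus\s_{C'}^+$ is exactly the set $\{\min\s(\beta):\beta\subseteq\alpha\}$ of $C'$-face minima, and the minimum of that set equals $\min\s_C(\alpha)$. One also identifies the faces of $C$ inside $r_{C'}(D)$ with the faces $\alpha$ (contracting the sides in $\s_{C'}^+$ merges vertices but creates no new $C$-faces), the sides of $r_{C'}(D)$ lying in the face indexed by $\alpha$ being $\s_C(\alpha)\setminus\s_{C'}^+$. Then, within $\s_C(\alpha)$, the set $\s_C^+(r_{C'}(D))$ contributes precisely the $C'$-face minima other than $\min\s_C(\alpha)$, while $\s_{C'}^+(D)$ contributes precisely the non-minimal sides of each $\beta\subseteq\alpha$; these two pieces are disjoint and their union is $\s_C(\alpha)\setminus\{\min\s_C(\alpha)\}$, which is the asserted equality $\s_C^+(D)=\s_{C'}^+(D)\sqcup\s_C^+(r_{C'}(D))$.

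For the third identity I would split a chord $c\in C'$ according to whether it lies in some $C'_\alpha$ or in $C$, using Lemma \ref{lemcns2}: membership of $c$ in a set $\k$ is witnessed by a directed path of chords ending at $c$ whose source vertex lies in the corresponding $\s^+$. If $c\in C'_\alpha$, the part of the process of forming $r_{C'}(D)$ that takes place inside $\wt{\Pi}(\alpha)$ is, by the first identity, exactly the process of forming $r_{C'_\alpha}(q_C^\alpha(D))$: the same sides are contracted, and ``pointing towards the root of $\Pi_n$'' restricts on $\wt{\Pi}(\alpha)$ to ``pointing towards the root of $q_C^\alpha(D)$'', so $c$ is flipped in one process if and only if it is in the other, i.e. $c\in\k_{C'}(D)\iff c\in\k_{C'_\alpha}(q_C^\alpha(D))$. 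If $c\in C$, factoring the passage from $D$ to $r_C(D)$ through $r_{C'}(D)$ (so that the orientation change of $c$ is the composite of the two intermediate changes) shows that $c\in\k_C(D)$ if and only if $c$ is flipped in exactly one of the two steps, i.e. if and only if exactly one of $c\in\k_{C'}(D)$, $c\in\k_C(r_{C'}(D))$ holds; to upgrade ``exactly one'' to the disjoint-union statement $\k_C(D)=(\k_{C'}(D)\cap C)\sqcup\k_C(r_{C'}(D))$ one must additionally check that no chord of $C$ is flipped in both steps, which is where the second identity enters --- the sides contracted in the two steps are $\s_{C'}^+(D)$ and its complement $\s_C^+(D)\setminus\s_{C'}^+(D)$, and one verifies via the path criterion that a chord flipped by the first contraction is not flipped back by the second. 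Combining the two cases gives the identity, and in particular the disjointness of the left-hand union; taking cardinalities then recovers part (4) of Lemma \ref{lemhopfalgebra}.

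The step I expect to be the main obstacle is this last orientation bookkeeping. The difficulty is that the chords of $r_{C'}(D)$ and of the diagrams $q_C^\alpha(D)$ are already re-oriented relative to $D$ --- contracting a side in $\s_{C'}^+$ can move the root vertex --- so each invocation of Lemmas \ref{lemcns1} and \ref{lemcns2} must be made inside the appropriate diagram, with the appropriate notion of ``decreasing path'' and of ``source'', and one must check that the witnessing path remains literally the same set of chords throughout, only the distinguished source endpoint and the relevant $\s^+$-membership condition varying. I expect the cleanest way to handle the ``no double flip'' point is to work instead with the local conditions (K1)--(K3) of Lemma \ref{lemcns1}, verifying directly that they transform correctly under the contractions and splitting as needed according to whether the path leaving $c$ towards the root crosses the contracted sides.
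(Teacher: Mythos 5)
Your parts 1 and 2 are correct and close to the paper's own treatment: part 1 is exactly the paper's one-line refinement remark, and your part 2 replaces the paper's argument (easy inclusions, disjointness, then a cardinality count) by a direct face-by-face identification, which is fine and slightly more informative. One thing worth recording: as printed, identity 1 cannot hold with $\s_C^+(D)$ on the left, since $|\s_C^+(D)|=n-|C|$ while the right-hand side has $n-|C'|$ elements; the intended statement is $\s_{C'}^+(D)=\bigsqcup_\alpha\s_{C'_\alpha}^+(q_C^\alpha(D))$, and that corrected version is precisely what your per-face-minima argument proves and what is used later (e.g.\ in the proof of Lemma \ref{lemhopfalgebra}).

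For part 3 your organization genuinely differs from the paper's in one respect, and favorably: from the factorization $r_C(D)=r_C(r_{C'}(D))$ --- legitimate at this point, since by part 2 the same total set of sides gets contracted and chord orientations in a dissection diagram are canonical --- you get, for $c\in C$, that $c\in\k_C(D)$ iff $c$ lies in exactly one of $\k_{C'}(D)$, $\k_C(r_{C'}(D))$; given disjointness this packages the paper's three separate inclusions (including its most delicate step, the (K1)--(K3) case analysis proving $\k_C(r_{C'}(D))\subseteq\k_C(D)$) into a single observation. However, the two checks you defer are exactly where the paper's proof does its work, and as written they are gaps rather than bookkeeping. First, $\k_{C'}(D)\cap C'_\alpha=\k_{C'_\alpha}(q_C^\alpha(D))$ is not literally \enquote{the same process inside $\wt{\Pi}(\alpha)$}: in forming $r_{C'}(D)$ the boundary $C$-chords of $\wt{\Pi}(\alpha)$ are kept and the rest of $\Pi_n$ is present, whereas in $q_C^\alpha(D)$ those chords have been contracted; the paper bridges this with Lemma \ref{lemcns2}, noting that the witnessing path of $c$ consists of chords bounding a single face of the $C'$-dissection, hence lies inside $\wt{\Pi}(\alpha)$, and that its source side lies in $\s_{C'}^+(D)\cap\s_C(\alpha)=\s_{C'_\alpha}^+(q_C^\alpha(D))$ by (the corrected) part 1, which transfers the flip criterion between the two diagrams. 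Second, the no-double-flip statement $\k_{C'}(D)\cap\k_C(r_{C'}(D))=\varnothing$ is only asserted (\enquote{one verifies via the path criterion}); the paper's argument is that by Lemma \ref{lemcns1} every flipped chord is decreasing in the diagram in which it is flipped, so a chord of $\k_{C'}(D)$, once reversed, is increasing in $r_{C'}(D)$ and cannot be flipped again (this also uses that the relabelling of vertices under contraction is order-compatible). You name the right tools, so both gaps are fillable along the paper's lines, but they constitute the substance of the lemma.
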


	\begin{proof}
	\begin{enumerate}
	\item It is straightforward, since the partition of $\s$ given by $C'$ refines the one given by $C$.
	\item The fact that $\s_{C'}^+(D)\subset \s^+_C(D)$ and $\s^+_C(r_{C'}(D))\subset\s^+_C(D)$ are easy. Then the fact that $\s^+_{C'}(D)\cap \s^+_C(r_{C'}(D))=\varnothing$ is straightforward since by definition $\s^+_C(r_{C'}(D))$ is a subset of non-root edges of $r_{C'}(D)$, which are precisely the elements from $\s^+(D)\setminus\s^+_{C'}(D)$. Then we get $\s^+_{C'}(D)\sqcup \s^+_C(r_{C'}(D))\subset\s^+_C(D)$. The equality follows from a cardinality argument: $|\s^+_C(D)|=n-|C|$, $|\s^+_{C'}(D)|=n-|C'|$ and $|\s^+_C(r_{C'}(D))|=|C'|-|C|$.
	\item Since by Lemma \ref{lemcns1} the chords that one has to flip are all decreasing, we necessarily have $\k_{C'}(D)\cap\k_C(r_{C'}(D))=\varnothing$.
		\begin{enumerate}
		\item We prove that $\k_{C'}(D)\cap C'_\alpha=\k_{C'_\alpha}(q_C^\alpha(D))$. Let  $c=\stackrel{i_0}{\bullet}\longrightarrow\stackrel{i_1}{\bullet}$ be a chord in $\k_{C'}(D)\cap C'_\alpha$, and
		$$\stackrel{i_{-N}}{\bullet} \longrightarrow\cdots\longrightarrow \stackrel{i_{-1}}{\bullet}\stackrel{}{\longrightarrow}\stackrel{i_0}{\bullet}\stackrel{c}{\longrightarrow} \stackrel{i_1}{\bullet}$$
		the path in $C'$ given by Lemma \ref{lemcns2}, with $i_{-N}\in \s^+_{C'}(D)$. As has been noted in the proof of Lemma \ref{lemcns2}, the chords $\stackrel{i_{-k+1}}{\bullet}\stackrel{}{\longrightarrow}\stackrel{i_{-k}}{\bullet}$ are sides to the same polygon $\wt{\Pi}(\alpha)$. Hence $i_{-N}\in\s^+_{C'_\alpha}(q_C^\alpha(D))$ and then Lemma \ref{lemcns2} implies that $c\in\k_{C'_\alpha}(q_C^\alpha(D))$. The converse is straightforward.
		\item We prove that $\k_C(D)\subset \k_{C'}(D)\sqcup \k_C(r_{C'}(D))$. Let  $c=\stackrel{i_0}{\bullet}\longrightarrow\stackrel{i_1}{\bullet}$ be a chord in $\k_C(D)$, and
		$$\stackrel{i_{-N}}{\bullet} \longrightarrow\cdots\longrightarrow \stackrel{i_{-1}}{\bullet}\stackrel{}{\longrightarrow}\stackrel{i_0}{\bullet}\stackrel{c}{\longrightarrow} \stackrel{i_1}{\bullet}$$
		the path given by Lemma \ref{lemcns2}.
		We know that $i_{-N}\in\s^+_C(D)$. According to $2.$, we have two possibilities: either $i_{-N}\in\s^+_{C'}(D)$ and then Lemma \ref{lemcns2} implies that $c\in\k_{C'}(D)$, or $i_{-N}\in \s^+_C(r_{C'}(D))$ and then Lemma \ref{lemcns2} implies that $c\in\k_S(r_T(D))$. 
		\item We prove that $\k_{C'}(D)\cap C\subset \k_C(D)$. This is straightforward using the characterization of Lemma \ref{lemcns1}.
		\item We prove that $\k_C(r_{C'}(D))\subset\k_C(D)$. We use the characterization of Lemma \ref{lemcns1}. If a chord $c\in C$ is not in $\k_C(D)$, then one of the conditions $(K1)$, $(K2)$, $(K3)$ is not satisfied.\\
		If $(K1)$ is not satisfied in $D$, this means that the path starting from $c$ in $C$ goes to the root. Then no chord in this path is in $\k_C(D)$, and a fortiori in $\k_{C'}(D)$. Thus no chord is this path is flipped in $r_{C'}(D)$ and condition $(K1)$ is not satisfied in $r_{C'}(D)$.\\
		If $(K2)$ is not satisfied in $D$, this means that in the path
		$$\stackrel{i_0}{\bullet}\stackrel{c}{\longrightarrow} \stackrel{i_1}{\bullet}\longrightarrow\cdots\longrightarrow\stackrel{i_{M-1}}{\bullet}\longrightarrow \stackrel{i_M}{\bullet}$$
		starting at $c$ in $C$, there is an increasing arrow $i_{k-1}<i_k$. Then for $l=1,\ldots,k-1$, the chord $\stackrel{i_{l-1}}{\bullet}\stackrel{}{\longrightarrow} \stackrel{i_l}{\bullet}$ is not in $\k_C(D)$, hence not in $\k_{C'}(D)$, then it is not flipped in $r_{C'}(D)$. The chord $\stackrel{i_{k-1}}{\bullet}\stackrel{}{\longrightarrow} \stackrel{i_k}{\bullet}$ is increasing so it cannot be flipped in $r_{C'}(D)$ according to Lemma \ref{lemcns2}. Thus we see that condition $(K2)$ is not satisfied in $r_{C'}(D)$.\\
		If $(K3)$ is not satisfied in $D$, it means that there exists a chord $c'=\stackrel{j}{\bullet}\stackrel{}{\longrightarrow} \stackrel{i_k}{\bullet}$, $c'\in C$, with $j>i_0$ for some $k=1,\ldots,M$. For the same reason as above, none of the chords $\stackrel{i_{l-1}}{\bullet}\stackrel{}{\longrightarrow} \stackrel{i_l}{\bullet}$ is flipped in $r_{C'}(D)$, for $l=1,\ldots,k$. Let us suppose that $c'$ is not flipped in $r_{C'}(D)$. Then condition $(K3)$ is still not satisfied in $r_{C'}(D)$. Now let us suppose that $c'$ is flipped in $r_{C'}(D)$. Then we necessarily have $j>i_k$, and then $c'$ becomes decreasing in $r_{C'}(D)$, hence condition $(K2)$ is not satisfied in $r_{C'}(D)$.\\
		In either case we have shown that $c\notin \k_C(r_{C'}(D))$.
		\end{enumerate}
	\end{enumerate}
	\end{proof}
	
	\begin{proof}[Proof of Lemma \ref{lemhopfalgebra}]
	\begin{enumerate}
	\item The left-hand side is obtained by contracting the chords from $C'$; the right-hand side is obtained by contracting the chords from $C$, then contracting the chords from $C'_\alpha$ for each $\alpha$. The result is thus the same since by definition $C'=C\sqcup\bigsqcup_\alpha C'_\alpha$.
	\item The left-hand side is obtained by contracting the edges from $\s^+_T(D)$, then the chords from $S$; the right-hand side is obtained by contracting the chords from $C$, then the edges from $\s^+_{C_\alpha}(q_C^\alpha(D))$ for each $\alpha$. The equality then follows from Lemma \ref{techlem2}, $1$.
	\item The left-hand side is obtained by contracting the edges from $\s^+_{C'}(D)$, then the edges from $\s^+_C(r_{C'}(D))$; the right-hand side is obtained by contracting the edges from $\s^+_C(D)$. The equality then follows from Lemma \ref{techlem2}, $2$.
	\item This follows from taking the cardinality in Lemma \ref{techlem2}, $3$.
	\end{enumerate}
	\end{proof}
		
	\section{Proof of Lemma \ref{lemsignsgraphs}}\label{appB}
	
	We leave it to the reader to check that it is enough to do the proof for three families of loops.
		\begin{enumerate}[1.]
		\item The trivial loops $$\xymatrix{(u_0,\ldots,a_i,\ldots,u_k) \ar@{-}@/^1pc/[r] \ar@{-}@/_1pc/[r] & (u_0,\ldots,b_i,\ldots,u_k)}$$
		The statement is trivial since the expression $$-\sgn(\{a_i\},J')\sgn(\{b_i\},J')$$ is symmetric in $a_i$ and $b_i$.
		\item The triangles $$\xymatrix{ (u_0,\ldots,a_i,\ldots,u_k)  \ar@{-}[rr] \ar@{-}[rd]& & (u_0,\ldots,b_i,\ldots,u_k) \ar@{-}[ld]  \\ & (u_0,\ldots,c_i,\ldots,u_k) &\\}$$
		The statement follows from the following equality, valid for any linearly ordered set $X$ and any set $\{a,b,c\}$ of pairwise disjoint elements of $X$:
		\begin{eqnarray*}
		\sgn(\{a\},X\setminus\{a,b\})\,\sgn(\{b\},X\setminus\{a,b\})&\\
		\sgn(\{b\},X\setminus\{b,c\})\,\sgn(\{c\},X\setminus\{b,c\})&\\
		\sgn(\{c\},X\setminus\{a,c\})\,\sgn(\{a\},X\setminus\{a,c\})&=-1.
		\end{eqnarray*}
		Indeed, we apply this equality to 
		$$X=(\s_C(0)\setminus\{u_0\})\sqcup\cdots\sqcup \s_C(i)\sqcup \cdots\sqcup(\s_C(k)\setminus\{u_k\}).$$
		\item The squares $$\xymatrix{(u_0,\ldots,a_i,\ldots,c_j,\ldots,u_k) \ar@{-}[r]\ar@{-}[d]  & (u_0,\ldots,b_i,\ldots,c_j,\ldots,u_k)   \\
		(u_0,\ldots,a_i,\ldots,d_j,\ldots,u_k)  &(u_0,\ldots,b_i,\ldots,d_j,\ldots,u_k) \ar@{-}[u] \ar@{-}[l] }$$		
		The statement follows from the following equality, valid for any linearly ordered set $X$ and any set $\{a,b,c,d\}$ of pairwise disjoint elements of $X$:
		\begin{eqnarray*}
		\sgn(\{a\},X\setminus\{a,b,c\})\,\sgn(\{b\},X\setminus\{a,b,c\})&\\
		\sgn(\{c\},X\setminus\{b,c,d\})\,\sgn(\{d\},X\setminus\{b,c,d\})&\\
		\sgn(\{a\},X\setminus\{a,b,d\})\,\sgn(\{b\},X\setminus\{a,b,d\})&\\
		\sgn(\{c\},X\setminus\{a,c,d\})\,\sgn(\{d\},X\setminus\{a,c,d\})&=1.
		\end{eqnarray*}
		Indeed, we apply this equality to 
		$$X=(\s_C(0)\setminus\{u_0\})\sqcup\cdots\sqcup \s_C(i) \sqcup \cdots\sqcup \s_C(j) \sqcup \cdots\sqcup(\s_C(k)\setminus\{u_k\}).$$
		\end{enumerate}

	\section{Proof of Lemma \ref{lemproductsigns}}\label{appC}
	
	Let $\sigma_C:\{1,\ldots,n\}\stackrel{\simeq}{\rightarrow}\{1,\ldots,n\}$ be the permutation defined by blocks via $\nu_C$ and $\eta_C$. Then we have
	$$\sgn(\overline{C},C)\,\sgn(\nu_C)\,\sgn(\s_C^+,\overline{\s_C^+})\,\sgn(\eta_C)=\sgn(\sigma_C)$$
	thus we only have to prove that 
	$$\sgn(\sigma_C)=(-1)^{k_C(D)}.$$
	This is a straightforward consequence of the next lemma and the fact that the signature of a cyclic permutation of length $(r+1)$ is $(-1)^r$.
	
	\begin{lem}
	Let $D$ be a dissection diagram and $C\subset\c$.
	\begin{enumerate}
	\item $\k_C(D)$ is a disjoint union of path graphs	
	$\stackrel{i_0}{\bullet}\longrightarrow \stackrel{i_1}{\bullet}\longrightarrow\cdots\longrightarrow\stackrel{i_{r-1}}{\bullet}\longrightarrow \stackrel{i_r}{\bullet}$
	with $i_0>i_1>\cdots>i_{r-1}>i_r$ and $i_0\in \s_C^+$.
	\item With these notations, $\sigma_C$ is the (commutative) product of the cycle permutations $(i_0\,\, i_1 \,\,\cdots\,\,  i_{r-1}\,\, i_r)$.
	\end{enumerate}
	\end{lem}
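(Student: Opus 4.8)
\quad For the first assertion, the plan is to read off the shape of $\k_C(D)$ from the two characterisations proved in Appendix~\ref{appA}. By Lemma~\ref{lemcns1} every chord of $\k_C(D)$ points from a larger vertex to a smaller one, and since every vertex of $D$ is the tail of exactly one chord of $D$, it is the tail of at most one chord of $\k_C(D)$. I would then check that no vertex is the head of two chords of $\k_C(D)$: if $\stackrel{j}{\bullet}\to\stackrel{i}{\bullet}$ and $\stackrel{j'}{\bullet}\to\stackrel{i}{\bullet}$ both lay in $\k_C(D)$ with $j>j'$, then condition $(K3)$ of Lemma~\ref{lemcns1} would fail for the chord $\stackrel{j'}{\bullet}\to\stackrel{i}{\bullet}$ at the vertex $i$. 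Hence $\k_C(D)$ is a disjoint union of directed paths, and decreasingness forces $i_0>i_1>\cdots>i_r$ along each one. To locate $i_0$, I would apply Lemma~\ref{lemcns2} to the chord $\stackrel{i_0}{\bullet}\to\stackrel{i_1}{\bullet}$, obtaining a path $\stackrel{i_{-N}}{\bullet}\to\cdots\to\stackrel{i_0}{\bullet}\to\stackrel{i_1}{\bullet}$ in $C$ with $i_{-N}\in\s_C^+$; if $N\geq 1$ the same path shows, again by Lemma~\ref{lemcns2}, that $\stackrel{i_{-1}}{\bullet}\to\stackrel{i_0}{\bullet}\in\k_C(D)$, contradicting that $i_0$ receives no chord of $\k_C(D)$. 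So $N=0$ and $i_0\in\s_C^+$.

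For the second assertion, recall that $\sigma_C$ acts on the block $\overline{C}\subset\{1,\dots,n\}$ through $\nu_C$ and on the block $C$ through $\eta_C$, after the identifications $\s_C^+,\overline{\s_C^+}\subset\{1,\dots,n\}$. First I would sort the vertices of a path into these two blocks: each non-terminal $i_k$ (for $0\leq k<r$) lies in $C$, since its outgoing chord belongs to $\k_C(D)\subset C$; the terminal vertex $i_r$ lies in $\overline{C}$, for if its chord lay in $C$ one would verify conditions $(K1)$--$(K3)$ of Lemma~\ref{lemcns1} for that chord --- using that the chords of $D$ cannot cross in order to rule out an offending chord in $(K3)$ --- and deduce that it lies in $\k_C(D)$, prolonging the path against maximality. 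Then I would evaluate $\nu_C$ and $\eta_C$ on these vertices by unwinding their definitions through the natural labelling~(\ref{conventionbijection}) of the sub\-/diagrams $q_C^\alpha(D)$ and $r_C(D)$, together with the step-by-step contraction procedure from the proof of Lemma~\ref{lemcns1}; I expect this to give $\eta_C(i_k)=i_{k+1}$ for $0\leq k<r$, $\nu_C(i_r)=i_0$, and $\nu_C(m)=m$ (resp. $\eta_C(m)=m$) for every $m\in\overline{C}$ (resp. $m\in C$) lying on no path. Assembling these values over all paths yields $\sigma_C=\prod_{\text{paths}}(i_0\,i_1\,\cdots\,i_r)$, which together with $\sum_{\text{paths}}r=|\k_C(D)|=k_C(D)$ and the fact that a cycle of length $r+1$ has signature $(-1)^r$ is exactly what the proof of Lemma~\ref{lemproductsigns} requires.

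The first assertion is essentially a repackaging of Lemmas~\ref{lemcns1} and~\ref{lemcns2}. The hard part will be the evaluation of $\nu_C$ and $\eta_C$ on the path vertices in the second assertion: the main obstacle is the bookkeeping of how the vertex- and side-labels of $D$ transform when one contracts the chords of $C$ (to build the $q_C^\alpha(D)$) and the sides of $\s_C^+$ (to build $r_C(D)$), and then reads off the induced matching of chords with sides. This is where Lemma~\ref{lemcns2} and the explicit description of the contraction steps and the accompanying flips in the proof of Lemma~\ref{lemcns1} carry the weight.
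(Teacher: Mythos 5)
Your treatment of the first assertion is complete and correct, and differs from the paper's only in minor ways: the paper rules out a vertex carrying two incoming chords of $\k_C(D)$ by observing that after the flips such a vertex would have two outgoing chords in the dissection diagram $r_C(D)$, which is impossible, whereas you invoke condition $(K3)$ of Lemma \ref{lemcns1}; both work, and your derivation of $i_0\in\s_C^+$ from Lemma \ref{lemcns2} is exactly the paper's. In the second assertion your sorting of the path vertices into the blocks $C$ and $\overline{C}$ is also correct, but for $i_r\in\overline{C}$ you need not re-verify $(K1)$--$(K3)$ nor appeal to planarity: if the chord $c'$ leaving $i_r$ were in $C$, the path $i_0\to\cdots\to i_r\to(\text{head of }c')$ together with $i_0\in\s_C^+$ places $c'$ in $\k_C(D)$ directly by Lemma \ref{lemcns2}, contradicting maximality of the component; this is how the paper argues.

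The genuine gap is the remaining, and central, step of assertion 2: the evaluations $\eta_C(i_k)=i_{k+1}$ for $0\le k<r$, $\nu_C(i_r)=i_0$, and $\eta_C(m)=m$, $\nu_C(m)=m$ elsewhere are only announced (\enquote{I expect this to give}), and you yourself label them the main obstacle; but they are precisely the content of the lemma, so as it stands neither this statement nor Lemma \ref{lemproductsigns} is proved. The paper settles them in a few lines, without the step-by-step contraction bookkeeping of Appendix \ref{appA}, by using that the bijection (\ref{conventionbijection}) pairs each chord of a dissection diagram with the unique non-root side leaving the same vertex. Concretely: a chord $c\in C$ leaving the vertex $i$ and not in $\k_C(D)$ keeps its direction in $r_C(D)$, still leaves (the class of) $i$ there, and is paired with the side $i$ --- note that side $i\notin\s_C^+$, since otherwise the trivial path ($N=0$) of Lemma \ref{lemcns2} would force $c\in\k_C(D)$ --- so $\eta_C(i)=i$; a chord $i_{k-1}\to i_k$ of $\k_C(D)$ leaves $i_k$ after the flip, so $\eta_C(i_{k-1})=i_k$; and in $q_C(D)$ the contraction of the chords of $C$ identifies the vertices $i_0,\dots,i_r$ of a component, so the chord of $\overline{C}$ leaving $i_r$ leaves the merged vertex, which is where the side $i_0\in\s_C^+$ starts, giving $\nu_C(i_r)=i_0$, while every other chord of $\overline{C}$ is paired with its own side. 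Carrying out this short verification is what your plan still owes.
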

	
	\begin{ex}
	In the situation of Example \ref{exampleappendix}, we get $\sigma_C=(6\,\,4\,\,3\,\,2)(8\,\,7)$.
	\end{ex}
	
	\begin{proof}
	\begin{enumerate}
	\item Let us consider $\k_C(D)$ as a directed graph. If it contains a vertex attached to $l\geq 3$ chords, then there is one outcoming chord and $l-1\geq 2$ incoming chords. Hence after flipping those chords we get a vertex with $l-1\geq 2$ outcoming chords, which is impossible. Hence $\k_C(D)$ is a disjoint union of path graphs, and the same reasoning shows that in an individual component of $\k_C(D)$, all chords have the same direction. The statement then follows from Lemma \ref{lemcns2}.
	\item 
		Let us denote by $\varepsilon:\c\stackrel{\simeq}{\rightarrow}\s$ the bijection (\ref{conventionbijection}).
		\begin{enumerate} 
		\item  Let us first consider the bijection $\eta_C:C\stackrel{\simeq}{\rightarrow}\overline{\s_C^+}$ related to $r_C(D)$. If a chord $c\in C$ starting at the vertex $i$ is not in $\k_C(D)$ then it keeps the same direction in $r_C(D)$ and we get $\eta_C(c)=\varepsilon(c)$, which means $\eta(i)=i$. Now let us consider a chord $c=i_k=\stackrel{i_{k-1}}{\bullet}\longrightarrow \stackrel{i_k}{\bullet}$ with the notation of $1.$, $k=1,\ldots,r$. Then this chord changes direction in $r_C(D)$ and becomes $\stackrel{i_{k-1}}{\bullet}\longleftarrow \stackrel{i_{k}}{\bullet}$, hence $\eta_C(c)$ is the edge starting at $i_k$ and we get $\eta_S(i_{k-1})=i_k$.
		\item Let us now consider the bijection $\nu_C:\overline{C}\stackrel{\simeq}{\rightarrow}\s_C^+$ related to $q_C(D)$. Let $\stackrel{i_0}{\bullet}\longrightarrow \stackrel{i_1}{\bullet}\longrightarrow\cdots\longrightarrow\stackrel{i_{r-1}}{\bullet}\longrightarrow \stackrel{i_r}{\bullet}$ be a connected component of $\k_C(D)$ as in $1$. According to Lemma \ref{lemcns2}, the chord starting at $i_r$ is necessarily in $\overline{C}$. In $q_C(D)$, all the points $i_k$, $k=0,\ldots,r$, are identified, and thus the edge starting at $i_r$ is the edge starting at $i_0$, which means that $\nu_C(i_r)=i_0$. It is easy to check that for all other chords $c\in \overline{C}$ we get $\nu(c)=\varepsilon(c)$. This concludes the proof of the lemma.
		\end{enumerate}
	\end{enumerate}
	\end{proof}

\bibliography{biblio}

\begin{thebibliography}{BVGS90}

\bibitem[And04]{andrebook}
Y.~Andr{{\'e}}.
\newblock {\em Une introduction aux motifs (motifs purs, motifs mixtes,
  p{\'e}riodes)}, volume~17 of {\em Panoramas et Synth{\`e}ses [Panoramas and
  Syntheses]}.
\newblock Soci{\'e}t{\'e} Math{\'e}matique de France, Paris, 2004.

\bibitem[And09]{andregaloismotivestransc}
Y.~Andr{{\'e}}.
\newblock Galois theory, motives and transcendental numbers.
\newblock In {\em Renormalization and {G}alois theories}, volume~15 of {\em
  IRMA Lect. Math. Theor. Phys.}, pages 165--177. Eur. Math. Soc., Z{\"u}rich,
  2009.

\bibitem[Aom77]{aomotostructure}
K.~Aomoto.
\newblock On the structure of integrals of power product of linear functions.
\newblock {\em Sci. Papers College Gen. Ed. Univ. Tokyo}, 27(2):49--61, 1977.

\bibitem[Aom82]{aomotoaddition}
K.~Aomoto.
\newblock Addition theorem of {A}bel type for hyper-logarithms.
\newblock {\em Nagoya Math. J.}, 88:55--71, 1982.

\bibitem[BK94]{blochkriz}
S.~Bloch and I.~K{\v{r}}{\'{\i}}{\v{z}}.
\newblock Mixed {T}ate motives.
\newblock {\em Ann. of Math. (2)}, 140(3):557--605, 1994.

\bibitem[Bro12]{brownMTMZ}
F.~Brown.
\newblock Mixed {T}ate motives over {$\Bbb Z$}.
\newblock {\em Ann. of Math. (2)}, 175(2):949--976, 2012.

\bibitem[Bro13]{brownsinglevalued}
F.~Brown.
\newblock Single-valued periods and multiple zeta values.
\newblock {\em preprint}, 2013.

\bibitem[BVGS90]{bvgs}
A.~A. Be{\u\i}linson, A.~N. Varchenko, A.~B. Goncharov, and V.~V. Shekhtman.
\newblock Projective geometry and {$K$}-theory.
\newblock {\em Algebra i Analiz}, 2(3):78--130, 1990.

\bibitem[CK99]{conneskreimer}
A.~Connes and D.~Kreimer.
\newblock Hopf algebras, renormalization and noncommutative geometry.
\newblock In {\em Quantum field theory: perspective and prospective ({L}es
  {H}ouches, 1998)}, volume 530 of {\em NATO Sci. Ser. C Math. Phys. Sci.},
  pages 59--108. Kluwer Acad. Publ., Dordrecht, 1999.

\bibitem[Del71]{delignehodge2}
P.~Deligne.
\newblock Th{\'e}orie de {H}odge. {II}.
\newblock {\em Inst. Hautes {\'E}tudes Sci. Publ. Math.}, (40):5--57, 1971.

\bibitem[Del74]{delignehodge3}
P.~Deligne.
\newblock Th{\'e}orie de {H}odge. {III}.
\newblock {\em Inst. Hautes {\'E}tudes Sci. Publ. Math.}, (44):5--77, 1974.

\bibitem[Del89]{delignedroiteprojective}
P.~Deligne.
\newblock Le groupe fondamental de la droite projective moins trois points.
\newblock In {\em Galois groups over {${\bf Q}$} ({B}erkeley, {CA}, 1987)},
  volume~16 of {\em Math. Sci. Res. Inst. Publ.}, pages 79--297. Springer, New
  York, 1989.

\bibitem[DG05]{delignegoncharov}
P.~Deligne and A.~B. Goncharov.
\newblock Groupes fondamentaux motiviques de {T}ate mixte.
\newblock {\em Ann. Sci. {\'E}cole Norm. Sup. (4)}, 38(1):1--56, 2005.

\bibitem[GGL09]{ganglgoncharovlevin}
H.~Gangl, A.~B. Goncharov, and A.~Levin.
\newblock Multiple polylogarithms, polygons, trees and algebraic cycles.
\newblock In {\em Algebraic geometry---{S}eattle 2005. {P}art 2}, volume~80 of
  {\em Proc. Sympos. Pure Math.}, pages 547--593. Amer. Math. Soc., Providence,
  RI, 2009.

\bibitem[Gon95]{goncharovpolylogsarithmeticgeometry}
A.~B. Goncharov.
\newblock Polylogarithms in arithmetic and geometry.
\newblock In {\em Proceedings of the {I}nternational {C}ongress of
  {M}athematicians, {V}ol.\ 1, 2 ({Z}{\"u}rich, 1994)}, pages 374--387, Basel,
  1995. Birkh{\"a}user.

\bibitem[Gon01]{goncharovmultiplepolylogs}
A.~B. Goncharov.
\newblock Multiple polylogarithms and mixed {T}ate motives.
\newblock {\em preprint: arXiv:math/0103059}, 2001.

\bibitem[Gon02]{goncharovperiodsmm}
A.~B. Goncharov.
\newblock Periods and mixed motives.
\newblock {\em preprint: arXiv:math/0202154}, 2002.

\bibitem[Gon05]{goncharovgaloissym}
A.~B. Goncharov.
\newblock Galois symmetries of fundamental groupoids and noncommutative
  geometry.
\newblock {\em Duke Math. J.}, 128(2):209--284, 2005.

\bibitem[Hub00]{huberrealization}
A.~Huber.
\newblock Realization of {V}oevodsky's motives.
\newblock {\em J. Algebraic Geom.}, 9(4):755--799, 2000.

\bibitem[Hub04]{huberrealizationcorrigendum}
A.~Huber.
\newblock Corrigendum to: ``{R}ealization of {V}oevodsky's motives'' [{J}.
  {A}lgebraic {G}eom. 9 (2000), no. 4, 755--799; mr1775312].
\newblock {\em J. Algebraic Geom.}, 13(1):195--207, 2004.

\bibitem[Lev93]{levinetatemotives}
M.~Levine.
\newblock Tate motives and the vanishing conjectures for algebraic
  {$K$}-theory.
\newblock In {\em Algebraic {$K$}-theory and algebraic topology ({L}ake
  {L}ouise, {AB}, 1991)}, volume 407 of {\em NATO Adv. Sci. Inst. Ser. C Math.
  Phys. Sci.}, pages 167--188. Kluwer Acad. Publ., Dordrecht, 1993.

\bibitem[LR10]{lodayroncocombinatorial}
J.-L. Loday and M.~Ronco.
\newblock Combinatorial {H}opf algebras.
\newblock In {\em Quanta of maths}, volume~11 of {\em Clay Math. Proc.}, pages
  347--383. Amer. Math. Soc., Providence, RI, 2010.

\bibitem[OT92]{orlikterao}
P.~Orlik and H.~Terao.
\newblock {\em Arrangements of hyperplanes}, volume 300 of {\em Grundlehren der
  Mathematischen Wissenschaften [Fundamental Principles of Mathematical
  Sciences]}.
\newblock Springer-Verlag, Berlin, 1992.

\bibitem[Sta99]{stanleyvol2}
R.~P. Stanley.
\newblock {\em Enumerative combinatorics. {V}ol. 2}, volume~62 of {\em
  Cambridge Studies in Advanced Mathematics}.
\newblock Cambridge University Press, Cambridge, 1999.
\newblock With a foreword by Gian-Carlo Rota and appendix 1 by Sergey Fomin.

\bibitem[Zha00]{zhaogeneric}
J.~Zhao.
\newblock Remarks on a {H}opf algebra for defining motivic cohomology.
\newblock {\em Duke Math. J.}, 103(3):445--458, 2000.

\end{thebibliography}

\end{document}